\documentclass[10pt]{amsart}
\usepackage[latin1]{inputenc}
\usepackage{amsthm}
\usepackage{amssymb}
\usepackage{amsmath}
\usepackage{mathtools}
\usepackage{multicol}
\numberwithin{equation}{section} 
\theoremstyle{plain}
\newtheorem{thm}{Theorem}
\newtheorem*{thmA}{Theorem A}

\newtheorem*{thmB}{Theorem B}
\newtheorem*{thmC}{Theorem C}
\newtheorem*{propD}{Proposition D}
\newtheorem*{thmE}{Theorem E}
\newtheorem*{thmF}{Theorem F}

\newtheorem{prop}[thm]{Proposition}
\newtheorem{conj}[thm]{Conjecture}

\newtheorem{cor}[thm]{Corollary}
\theoremstyle{definition}
\newtheorem{defn}[thm]{Definition}
\newtheorem{ass}[thm]{Standing Hypothesis}
\newtheorem{rem}[thm]{Remark}

\usepackage{graphicx}
\usepackage{amscd}
\usepackage[all,knot,poly]{xy}
\usepackage[english]{babel}
\usepackage{mathrsfs}
\usepackage{hyperref}
\usepackage{xcolor}
\newcommand{\C}{\mathbb{C}}
\newcommand{\R}{\mathbb{R}}
\newcommand{\Q}{\mathbb{Q}}
\newcommand{\Z}{\mathbb{Z}}
\newcommand{\N}{\mathbb{N}}

\newcommand{\F}{\mathbb{F}}

\newcommand{\id}{\trianglelefteq}

\newcommand{\Tor}{\mathrm{Tor}}

\newcommand{\Span}{\mathrm{span}}

\newcommand{\gen}[1]{\langle #1\rangle}

\renewcommand{\ker}{\mathrm{Ker}}

\newcommand{\im}{\mathrm{Im}}

\newcommand{\Aut}{\mathrm{Aut}}

\newcommand{\uK}{universally Koszul }
\newcommand{\squig}{\mathbin{\rotatebox[origin=c]{-90}{$\xi$}}}
\newcommand{\pr}{\circledast}
\newcommand{\gr}{\mathrm{gr}\,}
\newcommand{\chr}{\mathrm{char}\,}
\newcommand{\heart}{\ensuremath\heartsuit}


\author{J\'an Min\'a\v{c}, Marina Palaisti, Federico W. Pasini, Nguy$\tilde{\text{\^{e}}}$n Duy T\^{a}n}
\title{Enhanced Koszul properties in Galois cohomology}
\date{\today}
\dedicatory{Dedicated to David Eisenbud, with admiration and gratitude.}

\begin{document}
\subjclass[2010]{Primary 12G05; Secondary 16S37, 16E05, 11E81}
\keywords{Absolute Galois groups, Galois cohomology, Bloch-Kato Conjecture, Elementary Type conjecture, Koszul algebras, Universally Koszul algebras, Koszul filtration}
\begin{abstract}
We prove that Galois cohomology satisfies several surprisingly strong versions of Koszul properties, under a well known conjecture, in the finitely generated case. In fact, these versions of Koszulity hold for all finitely generated maximal pro-$p$ quotients of absolute Galois groups which are currently understood. We point out several of these unconditional results which follow from our work. We show how these enhanced versions are preserved under certain natural operations on algebras, generalising several results that were previously established only in the commutative case. The subject matter in this paper contains topics which are used in various branches of algebra and computer science.
\end{abstract}
\vspace*{-20pt}
\maketitle
\vspace*{-20pt}
\tableofcontents
\section{Introduction}
\subsection{Motivation}
An important outstanding problem in Galois theory is to distinguish absolute Galois groups of fields among all profinite groups. The first significant results in this direction were obtained by E.~Artin and O.~Schreier in 1927 in \cite{artsch1,artsch2}, where they show that any finite subgroup of an absolute Galois group has order $\leq 2$.
In 2011, M.~Rost and V.~Voevodsky completed the proof of the Bloch-Kato conjecture (see \cite{suslin:norm, haeseweibel, voevodsky:motivic, voevodsky:BK}), which describes the Galois cohomology of a field in terms of generators and relations, provided the field contains enough roots of unity. Recently, there has been some considerable activity related to the vanishing of higher Massey products in Galois cohomology and to the defining relations of Galois groups (see \cite{wick1}, \cite{hw}, \cite{mintan1}, \cite{efrmat}, \cite{mintan2}, \cite{wick2}, \cite{gumito}, \cite{matzri:massey}, \cite{efrqua}, \cite{harwit}, \cite{guimin}, \cite{mirota}).
Nevertheless, a complete classification of absolute Galois groups seems to be an overly difficult goal at the moment.
One may ask the simpler question of how to classify all maximal pro-$p$ quotients of absolute Galois groups for a given prime number $p$. These pro-$p$ groups carry information about all Galois extensions of degree a power of $p$ of a given field. Some recent results about these extensions may be found in \cite{mich1}, \cite{mich2}, \cite{andy}, \cite{chmish}. However,  this classification turns out to be a difficult open problem as well. 

In our paper we approach the quest of understanding Galois cohomology by resolving its relations through basic methods of homological algebra that date back to D.~Hilbert \cite{hilbert}. In other words we consider relations between relations, relations between relations between relations, and so on. Following Hilbert, we call these higher order relations \emph{syzygies}.

In \cite{priddy}, S.B.~Priddy singled out a class of graded algebras over fields with remarkable syzygies, which he named \emph{Koszul algebras}. There is a fascinating dichotomy involving them: namely, the Koszul property is very restrictive, yet Koszul algebras surprisingly appear in numerous exciting branches of representation theory, algebraic geometry, combinatorics, and computer algebra.

Koszul algebras are \emph{quadratic}, that is they are generated by degree $1$ elements, and relations are generated by degree $2$ elements. A standard reference on quadratic algebras is \cite{polpos}. An important consequence of Bloch-Kato conjecture is that Galois cohomology with $\F_p$ coefficients is a quadratic algebra, provided that the ground field contains a primitive $p$\textsuperscript{th} root of unity. Therefore L.~Positselski in his PhD thesis \cite{posit:thesis} (see also \cite{positselsky:koszul}) and Positselski and A.~Vishik in \cite{posivis:koszul} began to consider the idea that mod-$p$ Milnor K-theory and the Galois cohomology of a field as above may be Koszul algebras.

Priddy also showed in \cite{priddy} that a quadratic algebra is Koszul if and only if a certain specific and easily understandable complex is acyclic. In this case, that complex is actually a resolution of the relations of the algebra, thereby providing an immediate solution to our initial question. Taking into account that all relations in Galois cohomology are consequences of the simple equations $a+b=1$ for $a,b$ in the given ground field, we see that when Galois cohomology is indeed Koszul, it reveals itself to be as clear and as harmonious as a Mozart symphony.

Formally, Positselski conjectured the Koszulity of the Galois cohomology with $\F_p$ coefficients and the mod-$p$ Milnor K-theory of all fields containing a primitive $p$\textsuperscript{th} root of unity, and he proved this assertion for local and global fields, in 2014 (see \cite{positselsky:koszul2}). On the other hand, from the year 2000 onward, various stronger versions of Koszulity have been proposed and investigated in commutative algebra, most of the time in relation to algebraic geometry (see \cite{conca:u-Koszul}, \cite{hehire}, \cite{corova}, \cite{cotrva}, \cite{codero}, \cite{enhehi}), and extended to the noncommutative setting by D.I.~Piontkovski\u\i\ in \cite{piontk}. A standard reference on commutative algebra is \cite{eisenbud}.

In general, deciding whether a given quadratic algebra is Koszul is already difficult, because of the lack of any combinatorial criteria and screening tests (the failure of the ``natural candidate criteria'' is well expressed in \cite{posit:hilb}, \cite{roos} and \cite{piontk:hilb}). In addition, some of these enhanced versions of Koszulity are very restrictive even among Koszul algebras, hence one expects that algebras with these properties should be extremely rare. Paradoxically, however, proving or disproving these enhanced versions may be easier than deciding about the original Koszulity. Indeed this is true in our case. In this paper we show that, surprisingly, if a certain well-known conjecture in Galois theory is true (see Section \ref{sec:ET groups}), then nearly all of these enhanced forms of Koszulity hold for the Galois cohomology of those maximal pro-$p$ quotients of absolute Galois groups that are finitely generated. In fact our results encompass all finitely generated pro-$p$ groups that are currently understood to be maximal pro-$p$ quotients of absolute Galois groups.

\subsection{Notation and results}
Throughout this paper, $p$ is a prime number, $F$ is a field, $G_F=\mathrm{Gal}(F_{sep}/F)$ is the absolute Galois group of $F$ and $G_F(p)=\mathrm{Gal}(F(p)/F)$ is its maximal pro-$p$ quotient. Unless otherwise stated, we always assume the following.
\begin{ass}\label{ass}\
\begin{enumerate}
\item We work in the category of profinite groups, therefore all groups are understood to be topological groups. In particular, all homomorphisms are tacitly assumed to be continuous, and all subgroups to be closed. Moreover, a group $G$ is said to be finitely generated if it is \emph{topologically} finitely generated; that is, there are finitely many elements of $G$ such that the smallest (closed) subgroup of $G$ containing all of these elements is $G$ itself.
\item The field $F$ contains a primitive $p$\textsuperscript{th} root of unity.
\item The group $G_F(p)$ is finitely generated.
\end{enumerate}
\end{ass}

Further, we denote $H^\bullet(G,\F_p)$ to be the group cohomology with continuous cochains of a profinite group $G$ with coefficients in $\F_p$ (see \cite[Chapter 1]{NSW}, \cite{serre}). Milnor's paper \cite{milnor} introduces a quadratic algebra which is now called Milnor K-theory, denoted $K_\bullet(F)$. It is the quotient of the tensor algebra on the multiplicative group $F^\times$ by the two-sided ideal generated by the tensors $a\otimes(1-a)$, $a\in F\setminus\{0,1\}$. An important special case of the Bloch-Kato conjecture claims that there is a natural graded isomorphism, the \emph{norm-residue map}
\[
h_\bullet:\frac{K_\bullet(F)}{pK_\bullet(F)}\to H^\bullet(G_F,\F_p).
\]
A proof of this conjecture was completed by Rost and Voevodsky in 2011 (see \cite{suslin:norm, haeseweibel, voevodsky:motivic, voevodsky:BK}). Two relevant corollaries are that $H^\bullet(G_F,\F_p)$ is a quadratic algebra, and that the inflation map
\[
\mathrm{inf}:H^\bullet(G_F(p),\F_p)\to H^\bullet(G_F,\F_p)
\]
is an isomorphism.

For the algebras $H^\bullet(G_F,\F_p)$, or equivalently $H^\bullet(G_F(p),\F_p)$, we investigate some properties related to Koszulity: original Koszulity, the existence of Koszul flags, the existence of Koszul filtrations, universal Koszulity, strong Koszulity, and the PBW property. The definitions of these properties are given in Section \ref{sec:koszul}, along with  relevant bibliographical information. The known relations between them are summarised in the following diagram, in which all implications are known to be not invertible.
\begin{center}
\[
\xymatrix@R-4pt{
\mbox{Universally Koszul}\ar@{}[dr]^(.25){}="aa"^(.75){}="bb" \ar@{=>} "aa";"bb" && \mbox{Strongly Koszul}\ar@{}[dl]^(.25){}="a1"^(.75){}="b1" \ar@{=>} "a1";"b1"&\\
& \mbox{Koszul filtration}\ar@{}[d]^(.25){}="a5"^(.75){}="b5" \ar@{=>} "a5";"b5" &&\mbox{PBW}\ar@{}[dll]^(.15){}="a2"^(.85){}="b2" \ar@{=>} "a2";"b2"\\
&\mbox{Koszul flag}\ar@{}[d]^(.25){}="a"^(.75){}="b" \ar@{=>} "a";"b" & &\\
& \mbox{Koszul}&&
}
\]
\end{center}
Interestingly, the cohomology algebras in \eqref{eq:cohom superpyth} and \eqref{eq:cohom level2} provide examples of universally Koszul algebras that are not strongly Koszul (see Propositions \ref{prop:superpythagorean filtration} and \ref{prop:level2 filtration} and Propositions \ref{prop:superpythagorean not strongly Koszul} and \ref{prop:level2 not strongly Koszul}).
We are not aware of any previous examples of this behaviour in the literature.

\begin{thmA}
Suppose that $G_F(p)$ is either a free pro-$p$ group or a Demushkin pro-$p$ group. Then $H^\bullet(G_F(p),\F_p)$ is universally Koszul and strongly Koszul.
\end{thmA}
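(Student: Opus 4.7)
The plan is to handle the free case and the Demushkin case separately, exploiting in each a very explicit description of $A:=H^\bullet(G_F(p),\F_p)$.

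\textbf{Free case.} If $G_F(p)$ is free pro-$p$ of rank $d$, then $H^n(G_F(p),\F_p)=0$ for $n\geq 2$, and so $A\cong\F_p\oplus V$ with $V=H^1$ of dimension $d$, all products of positive-degree elements being zero. Fix any basis $x_1,\ldots,x_d$ of $V$. For every $i$, the colon $(x_1,\ldots,x_{i-1}):x_i$ coincides with the augmentation ideal $(x_1,\ldots,x_d)$, because $x_i$ annihilates every positive-degree element; this gives strong Koszulity. For universal Koszulity I take as Koszul filtration the family $\mathcal{L}$ of all ideals $(W)$ with $W\subseteq V$ a linear subspace: given any $(W)\neq 0$ and $x\in W$, a hyperplane $W'\subset W$ not containing $x$ produces a cyclic quotient $(W)/(W')$ generated by $x$, and $(W'):x=(V)\in\mathcal{L}$ by the same annihilation calculation.

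\textbf{Demushkin case.} Here $A$ is a connected graded Poincar\'e duality algebra of formal dimension $2$, with $\dim A_0=\dim A_2=1$ and $\dim A_1=n$, so multiplication induces on $V:=A_1$ a non-degenerate graded-symmetric bilinear form $b:V\times V\to A_2\cong\F_p$. For any nonzero subspace $W\subseteq V$, non-degeneracy forces $W\cdot V=A_2$, hence $(W)=W\oplus A_2$; and $(0)=0$. The core computation is that, for $x\in V\setminus W'$ nonzero,
\[
(W'):x=\begin{cases}(x^\perp) & \text{if }W'=0,\\ (V) & \text{if }W'\neq 0,\end{cases}
\]
where $x^\perp:=\{y\in V:b(x,y)=0\}$. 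The first line follows because $b$ is non-degenerate, so $y\mapsto yx$ has kernel exactly $x^\perp$, and its image fills $A_2$; the second follows because if $W'\neq 0$ then $A_2\subseteq(W')$ already, making the colon absorb all of $A_1$. Both outcomes lie in $\mathcal{L}$, so $\mathcal{L}$ is a Koszul filtration and $A$ is universally Koszul. For strong Koszulity the colons $(x_1,\ldots,x_{i-1}):x_i$ for $i\geq 2$ collapse to $(V)$ by the above, so I need only fix an ordered basis $x_1,\ldots,x_n$ such that $x_1^\perp$ is spanned by a subset of the basis. The Labute--Serre classification of Demushkin groups provides, for $p$ odd, a symplectic normal form of the single relator, giving a basis in which $x_1^\perp=\Span(x_1,x_3,\ldots,x_n)$. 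For $p=2$ the normal forms of $b$ split into finitely many types, and in each one a suitable basis (possibly after a change of basis placing $x_1$ inside a hyperbolic plane) is available.

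The main obstacle I expect is the $p=2$ Demushkin case: squared generators can survive in the relator, the classification produces several distinct isomorphism classes of non-degenerate symmetric forms on $V$, and one must verify the colon-ideal formula above and exhibit a basis verifying the strong Koszul condition for each. Because the Labute--Serre list is finite and uniform in $n$, this reduces to a bounded case analysis, driven by the same Poincar\'e-duality principle that handles the $p$-odd argument cleanly.
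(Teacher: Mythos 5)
Your overall strategy coincides with the paper's: in the free case everything follows from the vanishing of products of positive-degree elements, and in the infinite Demushkin case from the non-degeneracy of the cup product; this is exactly how Propositions \ref{prop:free uK}, \ref{prop:demushkin uK}, \ref{prop:free sK} and \ref{prop:demushkin sK 1} proceed, and your explicit computation $(0):x=(x^\perp)$, $(W'):x=A_+$ for $W'\neq 0$ is a correct (slightly more explicit) version of the paper's argument, which only establishes $H_1(I:x)\supseteq H_2$ and then invokes a small lemma to conclude that $I:x$ is generated in degree $1$. There are, however, three concrete gaps. First, the finite Demushkin group $C_2$ (realised, e.g., by $F=\R$) falls under the theorem but not under your argument: $H^\bullet(C_2,\F_2)=\F_2[t]$ is a polynomial ring, not a Poincar\'e duality algebra of formal dimension $2$, so your blanket description of ``the Demushkin case'' does not apply to it. The paper treats it separately (Propositions \ref{prop:C_2 uK} and \ref{prop:c_2 sK}), where the only relevant colon ideal is $(0):t=(0)$.

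Second, for strong Koszulity you verify only the colon ideals $(x_1,\dots,x_{i-1}):x_i$ along one fixed order and conclude that it suffices to arrange $x_1^\perp$ to be spanned by basis vectors. The definition adopted in the paper (Definition \ref{def:strongly koszul}) requires $(Y):x$ to be generated by a subset of $X$ for \emph{every} $Y\subsetneq X$ and every $x\in X\setminus Y$; since the colons with $Y\neq\emptyset$ all collapse to $A_+$, the cases that actually matter are $(0):x_i=(x_i^\perp)$ for \emph{every} $i$, not just $i=1$. This does hold in the symplectic basis, but it must be checked for each generator. Third, the $p=2$ case analysis you postpone is not vacuous: in Labute's normal form for a Demushkin pro-$2$ group on $2k$ generators with $q=2$ one has $a_1\cup a_1=a_1\cup a_2=1$, so $a_1^\perp=\Span(a_1+a_2,a_3,\dots,a_{2k})$ is \emph{not} spanned by a subset of the given basis, and the paper must first pass to the basis $b_1=a_1$, $b_2=a_1+a_2$, $b_i=a_i$ $(i\geq 3)$ before the strong Koszulity condition can be read off. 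So the ``bounded case analysis'' you defer genuinely requires a change of basis in one of the three cases, and your proof is incomplete until that verification is carried out.
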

This is proved in Subsections \ref{ssec:uK free}, \ref{ssec:uK Demushkin}, \ref{ssec:sK free}, \ref{ssec:sK Demushkin}.

Free pro-$p$ groups and Demushkin pro-$p$ groups are the building blocks of all known examples of finitely generated groups of type $G_F(p)$. Thus it has been conjectured that all finitely generated maximal pro-$p$ quotients of absolute Galois groups of fields satisfying Hypothesis \ref{ass} are obtained by assembling free and Demushkin pro-$p$ groups via free products and certain semidirect products. This is the Elementary Type Conjecture for pro-$p$ groups (see Section \ref{sec:ET groups} for details). On the level of cohomology, the operation corresponding to a free product of pro-$p$ groups is the direct sum of algebras, and the operation corresponding to the aforementioned semidirect product is the twisted extension of algebras, which often reduces to a skew-symmetric tensor product with an exterior algebra (Definition \ref{def:operations on algebras} and Remark \ref{rem:twisted ext varia}).

\begin{thmB}
The direct sum of any two universally Koszul algebras (respectively, strongly Koszul algebras, algebras with a Koszul filtration) is universally Koszul (respectively, is strongly Koszul, has a Koszul filtration).
\end{thmB}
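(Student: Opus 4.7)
The plan is to assemble, for each property, the certifying structure on $A\oplus B$ from the corresponding structures on $A$ and $B$. Recall that in the direct sum of connected graded algebras one has $(A\oplus B)_n=A_n\oplus B_n$ for $n\geq 1$ and $A_+\cdot B_+=0$. The key structural input, underlying all three parts, is the following. For any subspace $V\subseteq A_1\oplus B_1$, write $V_A=\pi_A(V)\subseteq A_1$ and $V_B=\pi_B(V)\subseteq B_1$ for its coordinate projections, and let $\widehat{V}_A$, $\widehat{V}_B$ be the ideals of $A$, $B$ generated by $V_A$, $V_B$. Then the ideal $(V)$ of $A\oplus B$ satisfies $(V)_1=V$ and
\[
(V)_n=(\widehat{V}_A)_n\oplus(\widehat{V}_B)_n\quad\text{for all }n\geq 2.
\]
This is forced by $A_+\cdot B_+=0$: for $v=a+b\in V$ and $r=r'+r''\in(A\oplus B)_1$ one has $vr=ar'+br''$ and $rv=r'a+r''b$, so the $A$- and $B$-parts of $(V)$ decouple from degree $2$ onwards.

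For the Koszul filtration assertion, set $\mathcal{F}:=\{I_A+I_B:I_A\in\mathcal{F}_A,\,I_B\in\mathcal{F}_B\}$, where $I_A+I_B$ denotes the ideal of $A\oplus B$ generated by the degree-$1$ generators of $I_A$ and $I_B$ taken together. Given $K=I_A+I_B\in\mathcal{F}$ nonzero with $I_A\neq 0$, pick a predecessor $I'_A\in\mathcal{F}_A$ with $I_A=I'_A+(x)_A$ for some $x\in A_1$ and $(I'_A:x)_A\in\mathcal{F}_A$. Then $K=(I'_A+I_B)+(x)$ in $A\oplus B$, and using $B_+\cdot x=0$ one computes
\[
\bigl((I'_A+I_B):x\bigr)_{A\oplus B}=(I'_A:x)_A+B_+,
\]
which again lies in $\mathcal{F}$. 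The case $I_A=0$ is handled symmetrically, so $\mathcal{F}$ is a Koszul filtration of $A\oplus B$. The strongly Koszul case follows by the same scheme, taking $\mathcal{F}$ to be the ideals of $A\oplus B$ generated by subsets of $X\cup Y$, where $X\subseteq A_1$, $Y\subseteq B_1$ are the witnessing bases of $A$ and $B$.

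For universal Koszulity, we apply Conca's characterization~\cite{conca:u-Koszul}: a quadratic algebra $C$ is universally Koszul if and only if $((V):z)\in\mathcal{L}(C)$ for every $V\in\mathcal{L}(C)$ and every $z\in C_1$. For $C=A\oplus B$ and $z=z_A+z_B\in(A\oplus B)_1\setminus V$, the structural identity combined with $A_+\cdot B_+=0$ shows that $\bigl((V):z\bigr)_{A\oplus B}$ is generated in degree $1$ by the degree-$1$ parts of $(\widehat{V}_A:z_A)_A$ and $(\widehat{V}_B:z_B)_B$, adopting the convention that a colon by the zero element is the whole ring. Each of these two colons lies in $\mathcal{L}(A)$, respectively $\mathcal{L}(B)$, by Conca's characterization applied to $A$ and $B$, hence the assembled ideal lies in $\mathcal{L}(A\oplus B)$.

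The principal technical obstacle is that a general $V\in\mathcal{L}(A\oplus B)$ need not decompose as $(V\cap A_1)+(V\cap B_1)$: it may contain mixed generators $z_A+z_B$ with both summands nonzero, so the projections $V_A, V_B$ can strictly enlarge $V$ in degree $1$. The structural equality $(V)_n=(\widehat{V}_A)_n\oplus(\widehat{V}_B)_n$ for $n\geq 2$ confines this discrepancy to degree $1$, which is precisely what allows colon ideal computations in $A\oplus B$ to reduce to independent colon ideal computations in $A$ and $B$.
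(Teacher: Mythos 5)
Your proposal is correct and follows essentially the same route as the paper: in all three parts it exploits $A_+\cdot B_+=0$ to show that an ideal generated in degree $1$ agrees from degree $2$ onwards with the decoupled ideal built from the coordinate projections of its generators, so that colon ideals in $A\sqcap B$ reduce to colon ideals computed separately in $A$ and $B$ (with the unit-ideal/whole-ring degenerate cases absorbed into $A_+$ or $B_+$), exactly as in Propositions \ref{prop:direct sum universally Koszul}, \ref{prop:direct sum strongly Koszul} and \ref{prop:direct sum Koszul filtration}. The only cosmetic difference is that the paper organizes the universally Koszul case as an explicit three-way case split on whether $a\in J_A$ and $b\in J_B$, where you package the same computation via ``degree-$1$ parts'' of the component colons.
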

This is proved in Propositions \ref{prop:direct sum universally Koszul}, \ref{prop:direct sum strongly Koszul} and \ref{prop:direct sum Koszul filtration}.
\begin{thmC}
Any twisted extension of a universally Koszul algebra (respectively, an algebra with a ``good'' Koszul filtration) is universally Koszul (respectively, has a ``good'' Koszul filtration).
\end{thmC}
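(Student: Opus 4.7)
The plan is to lift Koszul-filtration data from $A$ to a twisted extension $B$ of $A$, exploiting the underlying vector-space decomposition $B = A \oplus At$ (with $\deg t = 1$) together with the quadratic relations that encode the twisting. First I would pin down the meaning of the \emph{goodness} hypothesis on a Koszul filtration on $A$: it should amount to stability under the twisting automorphism $\sigma$ and the derivation $\delta$ that govern how $t$ commutes with elements of $A_1$, so that colon ideals computed in $B$ never leave the lifted filtration on $B$.

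For the universal Koszulity statement, I would take an arbitrary ideal $I\id B$ generated by a subspace $V \subseteq B_1 = A_1 \oplus \F_p t$ and split the analysis into two cases. If $V \subseteq A_1$, I pick a subideal $J$ with $I/J$ principal and compute $(J :_B I)$ by decomposing each element of $B$ according to its $t$-component; each slice of the colon ideal reduces to a colon ideal in $A$, which is generated in degree $1$ by the universal Koszulity of $A$. If $V$ contains an element of the form $t+v$ with $v \in A_1$, change generators so that $t \in V$, reduce modulo $t$, and transfer the problem to one about ideals of $A$ generated in degree $1$; again the universal Koszulity of $A$ closes the case, while the residual piece involving $t$ is handled by the quadratic relation governing $t \cdot A_1$.

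For the ``good'' Koszul filtration variant, I would lift a good filtration $\mathcal{F}_A$ on $A$ to $\mathcal{F}_B = \{IB,\;IB + Bt : I \in \mathcal{F}_A\}$ and verify the Koszul-filtration axioms by the same case analysis. The main obstacle will be the colon-ideal check in the mixed case where $t \in I$ but $t \notin J$: here the twisting data $\sigma$ and $\delta$ enter the computation nontrivially, and it is precisely the goodness hypothesis (stability of $\mathcal{F}_A$ under $\sigma$ and $\delta$) that keeps the resulting colon ideal inside $\mathcal{F}_B$. A secondary bookkeeping task, analogous to but more delicate than the direct-sum argument in Theorem B because of the noncommutativity introduced by the twisting, is to check that the successive quotients $I/J$ are genuinely generated by a single element of $B_1$; this is arranged by choosing $J$ carefully in each subcase so that the ``new'' generator is either in $A_1$ or of the form $t + v$.
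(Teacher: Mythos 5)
Your overall architecture (case analysis on whether the new degree-one generator lies in $A_1$, reduction of colon ideals in the extension to colon ideals in $A$ via a normal-form decomposition, lifting the filtration) matches the paper's strategy in outline, but the proposal rests on a misreading of what a twisted extension is here, and two of your key choices would not close the argument. First, in $B=A(t\mid x)$ the element $t$ is \emph{not} the adjoined variable: it is a pre-existing element of $A_1$ with $t+t=0$, the new variable is $x$, and the decomposition is $B=A\oplus Ax$ (normal form $u+vx$), not $A\oplus At$. There is no twisting automorphism $\sigma$ or derivation $\delta$ in this construction: $x$ strictly anticommutes with all of $A_1$, and the only ``twist'' is the single relation $x^2=tx$. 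Consequently your proposed goodness condition (stability of $\mathcal F_A$ under $\sigma$ and $\delta$) is vacuous here and is not the hypothesis that makes the proof work. The condition actually needed is closure under adding the principal ideal generated by $t$, namely $J\in\mathcal F\Rightarrow J+At\in\mathcal F$; it is invoked exactly once, in the case where the ideal $I$ contains both $x$ and $t-x$ (hence contains $t$), so that the $A$-part of $I$ becomes $I_A+At$ and must still lie in $\mathcal F$.

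Second, your lifted filtration $\mathcal F_B=\{IB,\ IB+Bt\}$ (read: $IB+Bx$) is too small. The decisive computation, forced by the relation $x^2=tx$, is
\[
\bigl(I_A\!\,^e\bigr):_B x \;=\; I_A\!\,^e+B(t-x),
\]
so the colon ideals that arise are generated by $t-x$, and the lifted filtration must contain all ideals $I^e+(Y)$ with $Y\subseteq\{x_1,\dots,x_m,\,t-x_1,\dots,t-x_m\}$; omitting the $t-x_j$ generators makes the colon-ideal axiom fail. (In the universal Koszulity case this issue is invisible because $\mathcal L(B)$ automatically contains everything; there the paper instead normalises a generator $x+l$ to $x$ via the graded automorphism $u+vx\mapsto u+v(x-l)$ and then verifies $J:_Bx=J+B(t-x)\in\mathcal L(B)$ directly.) Finally, note that the reduction from $m$ variables to one requires checking that the lifted filtration again satisfies the goodness condition, which your proposal does not address. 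To repair the proof: discard the $\sigma,\delta$ framework, work with the normal form $u+vx$ and the uniqueness of its components, take goodness to be closure under $J\mapsto J+At$, and enlarge the lifted filtration to include the ideals $I^e+(t-x_j)$.
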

This is proved in Proposition \ref{prop:twisted extension universally Koszul} and in Proposition \ref{prop:twisted extension Koszul filtration}. By ``good'' Koszul filtration, we mean a Koszul filtration which satisfies the condition labeled by \ref{eq:good filtration} in Proposition \ref{prop:twisted extension Koszul filtration}.

The analogous result for strong Koszulity does not hold in general: the cohomology algebra of a rigid field of level $0$ or $2$ is a twisted extension of the strongly Koszul algebra $\F_2\gen{t}$ or $\F_2\gen{t\mid t^2}$ respectively, yet we prove the following (see Propositions \ref{prop:superpythagorean not strongly Koszul} and \ref{prop:level2 not strongly Koszul}).
\begin{propD}
Let $F$ be a $2$-rigid field of level $0$ or $2$, such that $G_F(2)$ has a minimal set of generators with at least $3$ elements. Then $H^\bullet(G_F(2),\F_2)$ is not strongly Koszul.
\end{propD}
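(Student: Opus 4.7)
My plan is to argue by contradiction, exploiting the explicit quadratic presentations of $H^\bullet(G_F(2),\F_2)$ given in \eqref{eq:cohom superpyth} and \eqref{eq:cohom level2}. In both cases $H^1$ has a canonical basis $\{t, x_1,\dots,x_n\}$, where $t$ is the class of $-1$ and $x_1,\dots,x_n$ (with $n\ge 2$) correspond to a minimal set of additional topological generators of $G_F(2)$; the rigidity assumption makes the cup product $H^1\otimes H^1\to H^2$ highly constrained, with $t$ playing a distinguished, asymmetric role. Strong Koszulity would demand an ordered basis $y_1,\dots,y_{n+1}$ of $H^1$ such that each colon ideal $(y_1,\dots,y_{i-1}):y_i$ is generated in degree one by a subset of this basis.

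The first step is to classify, for arbitrary $y\in H^1$, the degree-$1$ annihilator $\mathrm{Ann}_{H^1}(y) = \{z\in H^1 : yz = 0\}$. From the explicit presentation this subspace takes only a short list of possible dimensions, depending on whether $y$ lies in a certain distinguished proper linear subvariety of $H^1$ (tied to $t$). Applying the constraint at $i=1$, the subspace $\mathrm{Ann}_{H^1}(y_1)$ must equal the linear span of some subset of $\{y_2,\dots,y_{n+1}\}$; a linear-independence and dimension count then forces $y_1$ into the distinguished subvariety, so that after rescaling and reindexing we may arrange $y_1 = t$. Fixing $y_1 = t$, I would then use the presentation to write out the degree-$1$ part of $(t):y_2$ explicitly in terms of the coordinates of $y_2$ in the canonical basis, and check the requirement that it also be spanned by basis elements.

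The final step is to derive a contradiction from the constraint at $i=2$: using the hypothesis $n+1\ge 3$, with at least two $x_j$'s available, I expect to exhibit, for every admissible $y_2\in H^1\setminus\langle t\rangle$, an element of $((t):y_2)_1$ whose expansion in $\{t,x_1,\dots,x_n\}$ has support incompatible with being a linear combination of basis elements completing $\{t,y_2\}$ to a basis of $H^1$. The principal obstacle is precisely this support analysis: one must check cases based on the coordinates of $y_2$, and separately for level $0$ (where $x_i^2 = tx_i$) and level $2$ (where $x_i^2 = 0$ and $t^2$ satisfies its own relation). The role of the hypothesis $n+1\ge 3$ is to ensure enough room in $H^1$ for the strongly Koszul constraints to become simultaneously unsatisfiable, consistent with the fact that the analogous cohomology algebras on fewer than three generators arise from free or Demushkin groups and are therefore strongly Koszul by Theorem A.
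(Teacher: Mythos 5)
Your overall strategy---work with the explicit presentations \eqref{eq:cohom superpyth} and \eqref{eq:cohom level2} and obstruct every candidate minimal generating set via colon-ideal constraints---is reasonable, and your first step is sound: for $y\in H^1$ with $y\neq 0,t$ one computes $\mathrm{Ann}_{H^1}(y)=\Span\{t+y\}$ (equivalently $(0):\ell([a])=(\ell([-a]))$ for $[a]\neq[1],[-1]$), while $\mathrm{Ann}_{H^1}(t)$ is $0$ in level $0$ and $\Span\{t\}$ in level $2$. But the next two steps fail. First, the requirement that $(0):y_1$ be generated by a subset of the basis does \emph{not} force $y_1=t$: since the annihilator is the line $\Span\{t+y_1\}$, it only forces $t+y_1$ to be another basis vector, which is perfectly satisfiable; the case $y_1=t$ is in fact the \emph{unconstrained} case. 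Second, and more seriously, the contradiction you hope to extract at $i=2$ does not exist: in both presentations one checks that $\bigl((t):y_2\bigr){}_1=\Span\{t,y_2\}$ for every $y_2\notin\Span\{t\}$, indeed $(t):y_2=(t,y_2)$, which is generated by the basis elements $y_1=t$ and $y_2$ themselves; there is no element of $\bigl((t):y_2\bigr){}_1$ with ``incompatible support''. Worse, for the standard basis ordered as $t<\alpha_2<\dots<\alpha_d$ every initial-segment colon ideal satisfies $(y_1,\dots,y_{i-1}):y_i=(y_1,\dots,y_i)$, so no chain of constraints of the form $(y_1,\dots,y_{i-1}):y_i$ for a single ordering can ever yield a contradiction: the obstruction to strong Koszulity is invisible to initial segments.

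The missing idea is that Definition \ref{def:strongly koszul} demands that $(Y):x$ be generated by a subset of $X$ for \emph{every} $Y\subsetneq X$ and every $x\in X\setminus Y$, in particular for $Y=\emptyset$ and \emph{every} $x\in X$, not only for the first element of some order. This is exactly where the paper strikes: for each $u\in X$ other than $t$, the identity $(0):u=(t+u)$ forces $t+u\in X$. Since $\dim_{\F_2}H^1\geq 3$, one can pick $\ell([a]),\ell([b])\in X$ so that $\ell([a]),\ell([b]),\ell([-a])=t+\ell([a]),\ell([-b])=t+\ell([b])$ are pairwise distinct; all four must then lie in $X$, yet their sum is $t+t=0$, contradicting the linear independence of a minimal generating set. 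If you wish to salvage a two-step version of your plan: once $y_1=t\in X$, applying the $Y=\emptyset$ constraint to a \emph{second} basis element $y_2$ gives $t+y_2\in X$ and hence the dependency $y_1+y_2+(t+y_2)=0$ among three elements of $X$---but the colon ideal you need for this is $(0):y_2$, not $(t):y_2$.
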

Note that this negative result does not disrupt the general harmony of our picture. In fact, strong Koszulity imposes the too strict condition of working only within a specific set of generators. This restriction was natural in the setting of the paper \cite{hehire} in which strong Koszulity was introduced, but this is not always so in our context.

The aforementioned examples of universally Koszul algebras that are not strongly Koszul arise as the cohomology algebras of the previous proposition.

Moreover, strong Koszulity is preserved by twisted extensions whenever they reduce to skew-symmetric tensor products.
\begin{thmE}
The skew-symmetric tensor product of a strongly Koszul algebra with a finitely generated exterior algebra is strongly Koszul.
\end{thmE}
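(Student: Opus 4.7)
The plan is to concatenate the minimal generating sets of the two factors and then verify the colon-ideal condition defining strong Koszulity by a direct computation, making essential use of the free left $A$-module structure of the skew-symmetric tensor product.

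First, I would fix an ordered minimal system of generators $x_1<\cdots<x_n$ of $A$ witnessing its strong Koszulity, and denote by $y_1<\cdots<y_m$ the standard generators of the exterior algebra $E=\Lambda(y_1,\dots,y_m)$. Write $B=A\pr E$ and equip it with the concatenated ordered generating set $x_1<\cdots<x_n<y_1<\cdots<y_m$. The structural fact to be used is the decomposition $B=\bigoplus_{I\subseteq\{1,\dots,m\}} A\cdot y_I$ of $B$ as a free left $A$-module, together with the compatibility between two-sided ideals that follows from the skew-commutation rule $x_iy_j=-y_jx_i$: for any subset $S=S_X\sqcup S_Y$ of the combined generators,
\[
(S)_B=(S_X)_A\pr E + A\pr (S_Y)_E.
\]

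With this splitting in hand, the colon computation breaks into two parallel cases. If $z=x_k\notin S_X$, then for $b=\sum_I a_Iy_I\in B$ one shows that $bz\in(S)_B$ if and only if $a_I\in (S_X)_A:x_k$ for every index set $I$ with $y_I\notin (S_Y)_E$; invoking strong Koszulity of $A$, there is a subset $T\subseteq\{x_1,\dots,x_n\}$ with $(S_X)_A:x_k=(T)_A$, and the direct-sum structure yields $(S)_B:x_k=(T\cup S_Y)_B$, generated by a subset of the chosen generators of $B$. If instead $z=y_k\notin S_Y$, the relations $y_k^2=0$ and $y_Iy_k=0$ for $k\in I$ simplify the analogous bookkeeping to the clean identity $(S)_B:y_k=(S_X\cup S_Y\cup\{y_k\})_B$, once again generated by a subset of the chosen generators.

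The only genuinely delicate point, and the one I would treat carefully, is the sign accounting in the derivation of the ideal-splitting identity and in the first colon calculation: once $bz$ is re-expanded in the canonical basis $\{a_Iy_I\}$ by repeated application of $x_iy_j=-y_jx_i$, the problem decouples along $I$, each summand is checked independently against the decomposition of $(S)_B$, and the strong Koszulity hypothesis of $A$ finishes the job. The trivial colon behaviour of generators in the exterior algebra $E$ takes care of the remaining case, so no further hypothesis on $E$ beyond finite generation is needed.
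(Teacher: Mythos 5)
Your proposal is correct and follows essentially the same route as the paper's proof of Proposition \ref{prop:twisted extension strongly Koszul}: both arguments rest on the free left $A$-module structure of the skew-symmetric tensor product and the homogeneity of the relevant colon ideals with respect to the auxiliary grading of Remark \ref{rem:twisted ext varia}, which lets membership be tested componentwise and reduces every colon computation either to a colon ideal in $A$ or to the trivial identity $(S):y_k=(S\cup\{y_k\})$. The only cosmetic difference is that you treat all $m$ exterior generators at once via the decomposition $B=\bigoplus_I A\,y_I$, whereas the paper inducts on one exterior generator at a time using the normal form $u+vx$.
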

This is Proposition \ref{prop:twisted extension strongly Koszul}; the analogous statement for universally Koszul algebras, and for algebras with Koszul filtrations, is a corollary of Theorem C.

All together, these results prove that under the Elementary Type Conjecture for pro-$p$ groups, the algebra $H^\bullet(G_F(p),\F_p)$ is universally Koszul for any finitely generated $G_F(p)$, and hence in particular it is Koszul. So, as a bonus, we also obtain a new proof of the main result of \cite{Koszul1}. But the different perspective adopted here adds deep insights into the homological behaviour of modules over Galois cohomology. In fact an algebra is Koszul if its augmentation ideal, or equivalently its ground field, has a remarkably good resolution (a linear resolution, see Definition \ref{def:linear resolution}). As pointed out by Piontkovski\u\i\ in \cite{piontk}, algebras that satisfy enhanced versions of Koszulity have more modules with linear resolutions. The Galois cohomology of a pro-$p$ group as above enjoys this property at its maximum: an algebra is universally Koszul if every ideal generated by elements of degree $1$, or equivalently every cyclic module, has a linear resolution.

There are specific situations in which some form of the Elementary Type Conjecture is known to be true. Thus in any of these situations we get unconditional results. Some significant examples are included in Section \ref{sec:unconditional} and summarised in the next theorem.
\begin{thmF}
If $F$ has characteristic $\chr F\neq 2$ and either
\begin{enumerate}
\item $|F^\times/(F^\times)^2|\leq 32$, or
\item $F$ has at most $4$ quaternion algebras, or
\item $F$ is Pythagorean and formally real, or more generally
\item $E\subseteq F\subseteq E(2)$, with $E$ a Pythagorean and formally real field and $F/E$ a finite extension,
\end{enumerate}
then $H^\bullet(G_F(2),\F_2)$ is universally Koszul.
\end{thmF}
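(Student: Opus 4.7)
The plan is to reduce each of the four cases to the Elementary Type Conjecture: in each of (1)--(4) there is an unconditional classification showing that $G_F(2)$ belongs to the class $\mathcal{ET}(2)$ of finitely generated pro-$2$ groups obtained from free pro-$2$ and Demushkin pro-$2$ groups by iterated free pro-$2$ products and the semidirect products described in Section \ref{sec:ET groups}. Once this membership is granted, universal Koszulity of $H^\bullet(G_F(2),\F_2)$ follows by induction on the depth of the elementary-type decomposition.

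The base of the induction is supplied by Theorem A, which gives universal Koszulity for the mod-$2$ cohomology of free pro-$2$ and of Demushkin pro-$2$ groups. For the inductive step, a free pro-$2$ product of two such groups realises on cohomology as a direct sum of algebras in the sense of Definition \ref{def:operations on algebras}, so Theorem B preserves universal Koszulity; each of the allowed semidirect products realises on cohomology as a twisted extension in the sense of Definition \ref{def:operations on algebras} and Remark \ref{rem:twisted ext varia}, so the universally Koszul part of Theorem C preserves the property. Iterating these two steps along the decomposition tree yields the conclusion for every $G_F(2)\in\mathcal{ET}(2)$, and hence for the fields in (1)--(4).

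It then remains to record, for each item, the unconditional input that places $G_F(2)$ in $\mathcal{ET}(2)$. Case (1) follows from the classification of $G_F(2)$ for fields with $|F^\times/(F^\times)^2|\leq 32$, i.e.\ $G_F(2)$ of rank at most $5$, where every $G_F(2)$ occurring is of elementary type. Case (2) follows from the parallel classification of $W$-groups of fields with at most four quaternion algebras. Case (3) uses the well-known description of the maximal pro-$2$ Galois group of a Pythagorean formally real field as a free pro-$2$ product of copies of $\Z/2$, whose cohomology is a direct sum of polynomial algebras $\F_2[t]$ and is therefore covered by Theorem B alone (together with the trivially universally Koszul base case $\F_2[t]$). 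Case (4) is reduced to case (3) by a result on Galois pro-$2$ groups of finite extensions of a Pythagorean formally real field $E$ inside $E(2)$, which again places $G_F(2)$ in $\mathcal{ET}(2)$ via free and semidirect constructions from the factors of $G_E(2)$. Precise statements and references are collected in Section \ref{sec:unconditional}.

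The hardest step is not either half in isolation but the verification that the semidirect products arising in the elementary-type decomposition really satisfy the hypotheses needed for Theorem C to apply, i.e.\ that the induced action on cohomology produces a genuine twisted extension of algebras rather than a more general crossed product. Once the dictionary between the group-theoretic operations on $\mathcal{ET}(2)$ and the algebraic operations on quadratic algebras is in place, the induction runs uniformly and the theorem follows.
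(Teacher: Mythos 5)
Your overall architecture (base cases from Theorem A, closure under direct sum from Theorem B and under twisted extension from Theorem C, applied along a decomposition supplied by an unconditional classification) is the same as the paper's. But the reduction step contains a genuine error: the class you reduce to is the wrong one, and in case (3) your structural input is false. The elementary type class of Definition \ref{defi:ETC} only admits cyclotomic semidirect products whose character satisfies $\im\,\chi\subseteq 1+4\Z_2$, and (for $p=2$) it is tailored to fields with $\sqrt{-1}\in F$. None of the fields in items (3)--(4), and many in items (1)--(2), satisfy this: already $F=\R$ has $|F^\times/(F^\times)^2|=2$ and $G_F(2)=C_2$, which is not an ET group in the sense of Definition \ref{defi:ETC}. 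Moreover, the maximal pro-$2$ Galois group of a Pythagorean formally real field is \emph{not} in general a free pro-$2$ product of copies of $\Z/2\Z$: by Theorem \ref{thm:PFR} (Min\'a\v{c}, Jacob) one also needs the semidirect products $\Z_2^m\rtimes G_{F_0}(2)$ with the inversion action, and for $F=\R((X))((Y))$ one gets $\Z_2^2\rtimes C_2$, whose cohomology $\F_2[t](t\mid x_1,x_2)$ has Hilbert function eventually equal to $4$, whereas a free product of copies of $C_2$ has constant Hilbert function in positive degrees. Consequently Theorem B alone cannot handle case (3): the cohomological operation arising there is a twisted extension over $t=\ell([-1])\neq 0$, so the $t\neq0$ case of Theorem C (Proposition \ref{prop:twisted extension universally Koszul}) is indispensable, not a technicality to be checked but the heart of why the weak (cohomological) form of the conjecture is the right framework.

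The second gap is in the inputs for (1), (2) and (4): the unconditional classifications you invoke are theorems about \emph{Witt rings}, not about pro-$2$ groups. Carson--Marshall \cite{carmar} classifies abstract Witt rings with $|G|\leq 32$, Cordes \cite{cordes} those with at most four quaternion algebras, and Jacob--Ware \cite{jacwar2} shows elementary type Witt rings are preserved under quadratic extensions; there is no corresponding group-level statement placing $G_F(2)$ in the class of Definition \ref{defi:ETC}. The paper bridges this by working with graded Witt rings: the Milnor conjecture (Orlov--Vishik--Voevodsky \cite{orvivo}) identifies $\gr W(F)$ with $H^\bullet(G_F(2),\F_2)$, and the two propositions of Subsection \ref{ssec:witt} show that $\gr$ converts the direct product of Witt rings into the direct sum of algebras and the group ring construction $W[x]$ into the twisted extension $(\gr W)(t\mid y)$ with $t=\overline{1+1}$ (Proposition \ref{prop:graded witt ring group construction}). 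Only after this translation do Theorems A, B and C apply, with base cases the graded Witt rings of local fields ($\F_2[t]$ for $\R$ and Demushkin cohomology for finite extensions of $\Q_p$). Your proof would be repaired by replacing membership in $\mathcal{ET}(2)$ with the statement that $W(F)$ is of elementary type (equivalently, that $G_F(2)$ is a WET group), and by routing the semidirect-product step through the group ring construction on Witt rings rather than through Definition \ref{defi:ETC}.
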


The Bloch-Kato Conjecture and its proof are marvelous advances in Galois theory, however, it seems to us that they are not the completion of the story and that much stronger properties hold for Galois cohomology. We believe that universal Koszulity is the right way to strengthen the Bloch-Kato Conjecture, independently of the Elementary Type Conjecture.
\begin{conj}
In the standing hypothesis that $F$ contains a primitive $p$\textsuperscript{th} root of unity and $G_F(p)$ is finitely generated, then $H^\bullet(G_F(p),\F_p)$ is universally Koszul.
\end{conj}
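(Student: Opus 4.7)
The most direct route is via the Elementary Type Conjecture (ET): Theorem A gives universal Koszulity for the building blocks (free and Demushkin pro-$p$ groups), while Theorems B and C show that direct sum and twisted extension preserve universal Koszulity. Hence resolving ET would immediately yield the conjecture. A genuinely ET-free proof, which is what the conjecture really aspires to, must instead exhibit universal Koszulity intrinsically from the arithmetic of $F$, without presupposing any structural decomposition of $G_F(p)$.

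A concrete ET-free plan is to show that the collection $\mathcal{F}=\{V\cdot H^\bullet : V\subseteq H^1 \text{ a subspace}\}$ is a Koszul filtration on $H^\bullet:=H^\bullet(G_F(p),\F_p)$; this is equivalent to universal Koszulity. The nontrivial axiom is the colon condition: for every subspace $V\subseteq H^1$ and every $v\in H^1\setminus V$, the ideal $(VH^\bullet:v)$ is again of the form $WH^\bullet$ for some $W\subseteq H^1$. Unpacked, whenever $x\in H^n$ satisfies $x\cup v\in V\cdot H^{n+1}$, then $x$ must already lie in the two-sided ideal generated by $\{w\in H^1 : w\cup v\in V\cdot H^1\}$. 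The strategy is to translate this, via Bloch-Kato and Kummer duality, into an arithmetic statement about Milnor K-theory mod $p$ and about norms in cyclic extensions of $F$ of exponent $p$. The degree-two piece is controlled by the Steinberg relations $(a)\cup(1-a)=0$, so in low degrees the condition ought to be tractable.

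The main obstacle, and in my view the heart of the problem, is to show that high-degree annihilation modulo $VH^\bullet$ never occurs ``accidentally'': it must always be forced by degree-one witnesses. Without ET there is no structural decomposition of $G_F(p)$ to induct on, so an essentially new input seems to be required; plausible sources include the vanishing of higher Massey products (already known to impose strong constraints on $G_F(p)$, cf.\ the references cited in the introduction) and a quadratic-dual analysis of the relation module of $G_F(p)$ that goes beyond its role in establishing quadraticity. Parallel to this frontal assault, a more modest but tangible program is to extend Theorem F: by bounding the level, by limiting the number of quaternion algebras, or by working within controlled extensions of Pythagorean fields, one can settle the conjecture unconditionally on broader and broader ground while the general case remains out of reach.
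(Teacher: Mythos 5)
Be aware that the statement you were asked to prove is presented in the paper as a \emph{conjecture}: the paper offers no proof of it, only the conditional result that it holds whenever $G_F(p)$ is an elementary type group (equivalently, under the Elementary Type Conjecture), plus the unconditional special cases of Theorem F. Your first paragraph reconstructs exactly that conditional argument --- universal Koszulity of the building blocks (Theorem A, i.e.\ Propositions \ref{prop:free uK}, \ref{prop:C_2 uK} and \ref{prop:demushkin uK}) combined with closure of universal Koszulity under direct sums and twisted extensions (Propositions \ref{prop:direct sum universally Koszul} and \ref{prop:twisted extension universally Koszul}) --- so on that front you agree completely with the paper. But this does not prove the conjecture; it only reduces one open conjecture to another.

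Your proposed ET-free route does not close the gap. The family $\mathcal F=\{VH^\bullet : V\subseteq H^1\}$ is precisely the set $\mathcal L(H^\bullet)$ of left ideals generated in degree $1$, and the equivalence between ``$\mathcal L(A)$ is a Koszul filtration'' and universal Koszulity is already Proposition \ref{prop:equiv to uK}; so this reformulation is a restatement of the problem, not progress on it. The entire content of the conjecture is concentrated in the colon condition you then name --- that $(VH^\bullet):v$ is again generated in degree $1$ for every subspace $V\subseteq H^1$ and every $v\in H^1\setminus V$ --- and for this you offer only candidate sources of input (Massey product vanishing, a quadratic-dual analysis of the relation module) without any argument that they suffice. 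In particular, nothing in your sketch rules out the ``accidental'' high-degree annihilation that you yourself identify as the crux; the Steinberg relations control degree $2$, but the condition must hold in all degrees. The proposal should therefore be read as a correct account of what is known together with a reasonable but unproven research program, not as a proof.
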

It even may be possible to extend most of our results to infinitely generated maximal pro-$p$ quotients of absolute Galois groups (in fact, in \cite{positselsky:koszul2}, Positselski has already successfully proved the PBW property of the Galois cohomology of general local and global fields). This possible extension, along with the previous conjecture, is research in progress.

We made a special effort to write this paper in such a way that mathematicians from various areas such as Galois theory, commutative algebra, algebraic geometry, and computer science may find it accessible.
\subsection*{Acknowledgements}
We are very grateful to Sunil Chebolu, Ronie Peterson Dario, Ido Efrat, David Eisenbud, Jochen G\"artner, Stefan Gille, Pierre Guillot, Chris Hall, Daniel Krashen, John Labute, Eliyahu Matzri, Danny Neftin, Jasmin Omanovic, Claudio Quadrelli, Andrew Schultz, John Tate, \c{S}tefan Toh\v{a}neanu, Adam Topaz, Thomas Weigel and Olivier Wittenberg for interesting discussions, enthusiasm, and encouragement concerning this paper and related topics.
The authors are extremely grateful to the referee for her/his insightful report and for the very careful and thoughtful comments, suggestions and corrections. These suggestions and corrections have helped us to make our paper clearer and more readable. J\'an Min\'a\v{c} gratefully acknowledges support from Natural Sciences and Engineering Research Council of Canada (NSERC) grant R0370A01.
   Nguy$\tilde{\text{\^{e}}}$n Duy T\^{a}n is supported by the Vietnam National Foundation for Science and Technology Development (NAFOSTED) under grant number 101.04-2019.314 and the Vietnam Academy of Science and Technology under grant number CT0000.02/20-21.


\section{Koszulity and relatives}\label{sec:koszul}
In this section, $K$ is an arbitrary field and we consider possibly noncommutative algebras over $K$. Ideals and modules are to be assumed as \emph{left} ideals or modules unless otherwise specified. Enhanced Koszul properties are defined accordingly. We warn the reader that some of them are not left-right symmetric: they have a version for right ideals and modules, and algebras satisfying the left version of a property need not satisfy the corresponding right version, and vice versa. For example, in \cite{pio:noncomm} it is asserted that the algebra $K\gen{x,y,z,t\mid zy-tz,zx}$ is universally Koszul for left ideals but not for right ideals. This problem becomes irrelevant in the cohomology of absolute Galois groups, as the algebras $H^\bullet(G_F,\F_p)$ are graded-commutative (\cite{milnor}).
\subsection{Original Koszul property}
An associative, unital $K$-algebra $A$ is \emph{graded} if it decomposes as a direct sum of $K$-vector spaces $A=\bigoplus_{i\in\Z}A_i$ such that, for all $i,j\in\Z$, $A_iA_j\subseteq A_{i+j}$. $A_i$ is the \emph{homogeneous component} of degree $i$. A graded $K$-algebra $A$ is \emph{connected} if $A_i=0$ for all $i<0$ and $A_0=K\cdot 1_A\cong K$; \emph{generated in degree $1$} if all of its (algebra) generators are in $A_1$; \emph{locally finite-dimensional} if $\dim_{K} A_i<\infty$ for all $i\in\Z$ (a common alternative terminology for this is \emph{finite-type}).
A left module $M$ over a graded algebra $A$ is \emph{graded} if it decomposes as a direct sum of $K$-vector spaces $M=\bigoplus_{i\in\Z}M_i$ such that, for all $i,j\in\Z$, $A_iM_j\subseteq M_{i+j}$. A graded left $A$-module $M$ is \emph{connected} if there is a $n\in\Z$ such that $M_i=0$ for all $i<n$; \emph{generated in degree $n$} if all its (module) generators are in $M_n$; \emph{locally finite-dimensional} or \emph{finite type} if $\dim_{K} M_i<\infty$ for all $i\in\Z$.

For a vector space $V$ over $K$, let $T(V)$ denote the \emph{tensor} $K$-algebra generated by $V$, that is,
\[T(V)=\bigoplus_{n\geq0}V^{\otimes n},\qquad \text{with }V^{\otimes0}=K.\]
This is a graded algebra with algebra multiplication given by the tensor product. Whenever possible, in writing the elements of a tensor algebra, we will omit tensor signs, and simply use juxtaposition. In particular, if $V$ is finite-dimensional with a basis $\{x_1,\dots,x_n\}$, then $T(V)$ will be identified with the algebra $K\gen{x_1,\dots,x_n}$ of noncommutative polynomials in the variables $x_1,\dots,x_n$, graded by polynomial degree.

Ignoring the grading, tensor algebras are the free objects in the category of (associative, unital) algebras (see \cite[\S 1.1.3]{lodval}), so any algebra is a quotient of a tensor algebra. When an algebra is presented as a quotient of a tensor algebra, we will simply identify an element of the former with each of its representatives in the latter. If the aforementioned quotient is over a two-sided ideal generated by homogeneous elements, then the new algebra inherits a well-defined grading from the tensor algebra.

For any graded algebra $A$, there exists a unique morphism of graded algebras $\pi:T(A_1)\to A$ that is the identity in degree $1$. Clearly, $A$ being generated in degree $1$ is equivalent to $\pi$ being surjective. The algebra $A$ is \emph{quadratic} if it is generated in degree $1$ and $\ker\ \pi=(\ker\ \pi_2)=T(A_1)\ker\ \pi_2T(A_1)$. In this case, we call $V=A_1$ the \emph{space of generators} and $R=\ker\ \pi_2$ the \emph{space of relators}, and we use the notation $A=Q(V,R)$.

Henceforth we assume graded algebras to be locally finite-dimensional, connected and generated in degree $1$, unless otherwise specified. In particular, every such algebra $A$ is equipped with the \emph{augmentation map} $\varepsilon\colon A\to K$, that is the projection onto $A_0$. Its kernel is the \emph{augmentation ideal} $A_+=\bigoplus_{n\geq1}A_n$.
The augmentation map gives $K$ a canonical structure of graded $A$-bimodule concentrated in degree $0$, via the actions $a\cdot k=\varepsilon(a)k$ and $k\cdot a=k\varepsilon(a)$ $(a\in A, k\in K)$. In what follows, ground fields of connected graded algebras will always be understood to be equipped with this graded bimodule structure, or just the left module part thereof.

A graded left module $M$ over a quadratic algebra $A$ is generated in degree $n$ if and only if the natural map $\varpi:A\otimes M_n\to M$ induced by the module structure is surjective. This of course forces $M_i=0$ for all $i<n$. $M$ is \emph{quadratic} if, for some $n\in\Z$, $M$ is generated in degree $n$ and $\ker\ \varpi$ is generated, as an $A$-submodule, by $\ker\ \varpi\cap (A_1\otimes M_n)$. Informally, this means that $M$ has a presentation with homogeneous degree $n$ generators and homogeneous degree $n+1$ defining relators.

We adhere to the convention that differentials of complexes of graded modules have degree $0$.

\begin{defn}\label{def:linear resolution}
Let $A$ be a graded $K$-algebra and $M$ a graded left $A$-module generated in degree $n$. A resolution $(P_\bullet, d_\bullet)$ of $M$ by free graded left $A$-modules is said to be \emph{linear} if each $P_i$ is generated in degree $i+n$: $P_i=A\cdot(P_i)_{i+n}$.
\end{defn}
\begin{rem}
This terminology is due to the fact that, if the modules $M_i$ are finitely generated, the entries of the matrix representing each $d_i$ in suitable bases are all in $A_1$.
\end{rem}
\begin{defn}\label{def:koszul algebra}
A quadratic $K$-algebra $A$ is \emph{Koszul} if $K$, as a trivial graded left $A$-module concentrated in degree $0$, has a linear resolution.
\end{defn}
\begin{defn}\label{def:koszul module}
A graded left module $M$ generated in degree $n$ over a quadratic algebra $A$ is \emph{Koszul} if it has a linear resolution.
\end{defn}
\begin{rem}
Our definition differs from the original definition in \cite[Section 2.1]{polpos} in that, following \cite{piontk}, we allow modules generated in arbitrary degree to be Koszul. However, this difference is not serious. In fact, for any graded left module $M$ and any integer $z$, we can define the \emph{shifted module} $M(z)$, that is the same module with shifted grading: $M(z)_i=M_{z+i}$. A module is Koszul according to our definition, if and only if its appropriate shifted module is Koszul according to \cite{polpos}.
%
\end{rem}
A free resolution of $K$ can be contracted to a free resolution of $A_+$ and, conversely, a free resolution of $A_+$ can be extended to a free resolution of $K$. These operations on resolutions preserve the property of being linear (see also the proof of Proposition \ref{prop:equiv to uK}). Hence the following statements are equivalent:
\begin{enumerate}
\item $A$ is Koszul;
\item $K$ is a Koszul left module;
\item $A_+$ is a Koszul left module.
\end{enumerate}

\begin{rem}\label{rem:koszulity and tor}
Since in our assumptions the ground field of an algebra, as a graded left module, is concentrated in degree $0$, Definition \ref{def:koszul module} is an extension of Definition \ref{def:koszul algebra}. Moreover, the existence of linear resolutions can be equivalently expressed in homological terms. Since differentials are assumed to be of degree $0$, it is possible to slice a complex of graded modules into its subcomplexes of vector spaces given by the homogeneous components. As a consequence, the Tor functors of a graded module $M$ can be bigraded as follows. Given a graded right module $R$ and a resolution $(P_\bullet, d_\bullet)$ of the graded left module $M$ by free graded left modules, the tensor product complex $(R\otimes_A P_\bullet,\mathrm{id}_N\otimes d_\bullet)$ is bigraded: besides the homological degree, there is the \emph{internal} degree 
\[
(R\otimes_A P_i)_j=\oplus_{h+k=j}N_h\otimes_A (P_i)_k.
\]
The differential $\mathrm{id}_N\otimes d_i$ maps the component of $R\otimes_A P_\bullet$ of bidegree $(i,j)$ to the component of bidegree $(i-1,j)$, so it is possible to define
\[
\mathrm{Tor}^A_{i,j}(R,M)=\frac{\ker\ d_i:(R\otimes_A P_i)_j\to(R\otimes_A P_{i-1})_j}{\im\ d_{i+1}:(R\otimes_A P_{i+1})_j\to(R\otimes_A P_i)_j}.
\]
This applies in particular to the right module $R=K$ with action given by the augmentation map. Since $K$ is concentrated in degree $0$, we obtain
\[
\mathrm{Tor}^A_{i,j}(K,M)=\frac{\ker\ d_i:(K\otimes_A P_i)_j\to(K\otimes_A P_{i-1})_j}{\im\ d_{i+1}:(K\otimes_A P_{i+1})_j\to(K\otimes_A P_i)_j}.
\]
Now, suppose that $M$ is generated in degree $n$ and $(P_\bullet, d_\bullet)$ is a linear resolution, so that each $P_i$ is generated in degree $i+n$. Let $x\in(P_i)_j$ with $j>i+n$. Then there are $a_t\in A_+$ and $z_t\in (P_i)_{i+n}$ such that $x=\sum_ta_tz_t$. But then, for all $k\in K$, $k\otimes x=k\otimes \sum_ta_tz_t=\sum_t(k\varepsilon(a_t)\otimes z_t)=0$. In other words, $(K\otimes_A P_i)_j=0$, and hence $\Tor_{i,j}^A(K,M)=0$. Viceversa, if $M$ is generated in degree $n$ and $\Tor_{i,j}^A(K,M)=0$ for all $i\geq0$ and $j\neq i+n$, then the standard minimal resolution of $M$ constructed inductively via free covers of kernels (see \cite[Section 1.4]{polpos}) is a linear resolution. To sum up, a graded left module $M$ generated in degree $n$ over a quadratic $K$-algebra $A$ is Koszul, if and only if $H_i(M)_j:=\Tor_{i,j}^A(K,M)=0$ for all $i\geq0$ and $j\neq i+n$. From this it follows that a Koszul module is necessarily quadratic.
\end{rem}

\subsection{Enhanced forms of Koszulity}
The Koszul property is difficult to check by the definition. It turns out that there are properties that imply Koszulity and are usually easier to check. The first one that we consider is a form of the Poincar\'e-Birkhoff-Witt (PBW) Theorem.

In the theory of Lie algebras, the PBW Theorem asserts that, if $\{x_i\mid i\in I\subseteq\N\}$ is a vector space basis of a Lie algebra $L$, totally ordered by, for example, $x_i<x_j$ for $i<j$, then the set of monomials 
\[
\{x_{i_1}^{k_1}\dots x_{i_l}^{k_l}\mid x_{i_1}<\dots<x_{i_l}, k_i\in\N\setminus\{0\}\}\cup\{1\}
\]
is a basis of the universal enveloping algebra $U(L)$.

There is a class of quadratic algebras, introduced in \cite{priddy}, that exhibit an analogous PBW-type vector space basis. Here we give a brief outline of their behaviour, referring to  \cite[Chapter 4]{lodval}, \cite[Chapter 4]{polpos} or \cite[Section 2.4]{Koszul1} for more details.
Let $A=Q(V,R)$ be a quadratic algebra. We choose a basis $\mathcal B$ of the space of generators $V$ and a total order on $\mathcal B$. We extend the total order to an admissible order on all monomials of $T(V)$. A total order $\preceq$ on the set $\mathrm{Mon}(T(V))$ of monomials of $T(V)$ is said to be \emph{admissible} if
\begin{enumerate}
\item the restriction of $\preceq$ to $\mathcal B$ coincides with the given total order on that basis;
\item for all $a\in \mathrm{Mon}(T(V))$, $1\preceq a$;
\item for all $a,b,c\in \mathrm{Mon}(T(V))$, \(
a\preceq b\Rightarrow ac\preceq bc, ca\preceq cb.
\)
\end{enumerate}
The $\preceq$-maximal monomial in each element $r\in R$ is called the \emph{leading monomial} of $r$. By the Gram-Schmidt process, we can always work with a basis of $R$ such that the leading monomial of each basis vector has coefficient $1$ and does not appear in any other basis vector. Such a basis is referred to as a \emph{normalized basis}. Once we have a normalized basis $\mathcal D$, each defining relation $r=0$, $r\in\mathcal D$, of $A$ can then be interpreted as a \emph{reduction rule} that substitutes its leading monomial with a linear combination of $\preceq$-lower monomials. A monomial in $T(V)$ is said to be \emph{reduced} if it is not possible to apply any reduction rule to it, or equivalently if it does not contain any leading monomial as a submonomial. Any monomial in $T(V)$ can be turned into a linear combination of reduced monomials via the application of finitely many reduction rules, thanks to the axioms of monomial order and the local finite dimensionality of $A$. Hence reduced monomials always linearly span $A$. The interesting question is when reduced monomials are linearly independent.
\begin{defn}\label{def:PBW basis}
Keeping the previous notation, the ordered basis $\mathcal B$ of $V$ is called a set of \emph{PBW generators} of $A$ if, for some admissible order, the reduced monomials of $T(V)$ form a basis of $A$, which is then called a \emph{PBW basis} of $A$.
\end{defn}
Note that the existence of a PBW basis depends on the choice of $\mathcal B$, the choice of a total order on it, and the choice of an admissible extension of such a total order. It may very well happen that some combinations of choices do not produce a PBW basis, while other combinations do.
\begin{defn}\label{def:PBW algebra}
A quadratic algebra is \emph{PBW} if it admits a set of PBW generators.
\end{defn}

The advantage of the PBW property lies in the fact that the linear independence of reduced monomials is completely determined by what takes place in degree $3$.
\begin{thm}[Bergman's Diamond Lemma, \cite{bergman}]
Let $A=Q(V,R)$ be a quadratic algebra. Choose a totally ordered basis $\mathcal B$ of $V$ and an admissible extension of the total order to $\mathrm{Mon}(T(V))$. If the set of reduced monomials of degree $3$ is linearly independent, then the set of all reduced monomials is linearly independent. In particular, $A$ is PBW.
\end{thm}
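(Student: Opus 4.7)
The plan is to prove that when all relators are quadratic, the linear independence of degree-$3$ reduced monomials forces global confluence of the reduction system on $T(V)$, and hence global linear independence.

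First I would set the scene carefully. The reduction rules coming from a normalized basis $\mathcal D$ of $R$ let one rewrite any monomial $m\in\mathrm{Mon}(T(V))$ by locating an occurrence of a leading monomial of some $r\in\mathcal D$ inside $m$ and substituting it with the corresponding $\preceq$-lower tail. A key formal observation (which I would record precisely) is that every reduction step strictly decreases a monomial in the admissible order extended degree-wise to formal sums; combined with local finite-dimensionality, this guarantees that any sequence of reductions terminates in a linear combination of reduced monomials. Hence reduced monomials span $A$ in every degree, as already noted in the text. The remaining content is linear independence.

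Second, I would translate linear independence into confluence. Linear independence of reduced monomials is equivalent to asserting that for every $m\in\mathrm{Mon}(T(V))$, every terminating sequence of reductions applied to $m$ yields the same linear combination of reduced monomials. Call this the \emph{confluence property}. The standard argument identifies the possible obstructions to confluence with \emph{ambiguities}: a monomial $m$ together with two distinct choices of reduction step on $m$. Because the reduction rules replace length-$2$ leading monomials, there are only two kinds of ambiguities inside a longer monomial: \emph{disjoint} ambiguities, in which the two length-$2$ windows do not meet, and \emph{overlap} ambiguities, in which the two windows share a single letter. Disjoint ambiguities resolve trivially: applying the two rules in either order produces the same result by a direct calculation. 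Overlap ambiguities, however, occur only in degree $3$, because two overlapping quadratic leading monomials span exactly $3$ letters.

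Third, I would invoke the hypothesis. Linear independence of the reduced degree-$3$ monomials, as elements of $A_3$, means precisely that every overlap ambiguity $xyz$ (with both $xy$ and $yz$ leading monomials) is \emph{resolvable}: the two reduction strategies, applied repeatedly until no rule fires, end in the same $K$-linear combination of reduced degree-$3$ monomials. Indeed, the difference between the two outputs lies in the degree-$3$ part of $R_3=V\otimes R+R\otimes V$ and becomes zero in $A_3$; since the two outputs are themselves already linear combinations of reduced monomials, linear independence forces equality.

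Fourth, the induction. I would prove by induction on degree $n$ that every ambiguity of a degree-$n$ monomial is resolvable. Base case $n=3$ is exactly the hypothesis together with the disjoint case (vacuous in degree $3$ for quadratic rules). For $n>3$, take any ambiguity of $m\in\mathrm{Mon}(T(V))$ of degree $n$ and reductions $r_1,r_2$ applied at two positions. If the positions are disjoint, the ambiguity resolves directly. If they overlap, they sit inside a length-$3$ submonomial of $m$, and one factors $m=u\cdot xyz\cdot v$; the degree-$3$ ambiguity resolves by the case already handled, and the global ambiguity resolves by applying this local resolution, the inductive hypothesis to bring both outcomes of $u\cdot(\text{stuff})\cdot v$ to normal form, and the termination guarantee to conclude. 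The main obstacle is carrying out this last step cleanly: one must argue that after the local resolution the two global reduction trees produce the same final sum, which requires a Newman-style lemma (local confluence plus termination implies confluence), and in the non-commutative setting this demands care because later reductions can propagate across the whole monomial. Once confluence is established in all degrees, reduced monomials are linearly independent, so $\mathcal B$ is a set of PBW generators and $A$ is PBW.
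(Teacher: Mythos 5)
The paper does not prove this statement at all: it is quoted as Bergman's Diamond Lemma with a citation to \cite{bergman}, so there is no in-paper argument to compare against. Your proposal is a reconstruction of the standard proof from Bergman's paper (specialised to quadratic relations), and its skeleton is sound: termination via the admissible order and finite-dimensionality of $V$; the equivalence between linear independence of reduced monomials and uniqueness of normal forms; the classification of ambiguities into disjoint ones (trivially resolvable by bilinearity, since a substitution in one length-$2$ window leaves the other window intact in every term) and overlaps, which for quadratic leading monomials are exactly the degree-$3$ critical monomials; and the correct observation that the difference of two complete reductions of $xyz$ lies in $(V\otimes R+R\otimes V)$, hence vanishes in $A_3$, so linear independence of reduced degree-$3$ monomials forces the two outcomes to coincide.

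The one genuine flaw is your induction variable in the fourth step. You propose induction on the \emph{degree} $n$, but reductions preserve degree, so the elements $u\cdot(\text{stuff})\cdot v$ to which you want to apply the inductive hypothesis still have degree $n$; induction on degree gives you nothing there. The correct induction (Bergman's Theorem 1.2, i.e.\ the Newman-style argument you allude to) is Noetherian induction on the admissible order $\preceq$ restricted to monomials of a fixed degree: since $\dim_K V<\infty$ there are finitely many monomials in each degree, so $\preceq$ is well-founded there; one assumes every monomial $\prec m$ has a unique normal form, notes that both one-step reducts of $m$ are linear combinations of monomials $\prec m$ and hence have well-defined normal forms, and uses local confluence (disjoint plus overlap resolution) to show these normal forms agree. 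With that substitution your outline becomes the standard, correct proof; as written, the inductive step does not close.
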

This paves the way for a finite algorithmic procedure that decides whether a given ordered basis of $V$ is a set of PBW generators of $A$ with respect to a given admissible order extension to $\mathrm{Mon}(T(V))$. This algorithm is the \emph{Rewriting Method} of \cite[Section 4.1]{lodval}. In fact, the reduced monomials of degree $3$ are linearly dependent if and only if there exists a non-reduced monomial of degree $3$ that can be reduced in two distinct ways not terminating with the same linear combination of reduced monomials. A monomial of degree $3$ is reducible according to two distinct sequences of reduction rules exactly when both its first $2$ letters and its last $2$ letters are leading monomials: in this case the first sequence starts reducing the first $2$ letters and the second sequence starts reducing the last $2$ letters. Such monomials are called \emph{critical}. Each critical monomial $m$ gives rise to an oriented graph, whose vertices are $m$ itself and the polynomials obtained from $m$ after each step of each reduction sequence, and whose edges are the reduction rules applied. The terminal vertices of the graph are precisely the linear combinations of reduced monomials that are equivalent to $m$ in $A$. The graph is \emph{confluent} if it has a unique terminal vertex. So the chosen ordered basis $\mathcal B$ is a set of PBW generators of $A$ for the chosen admissible order extension if and only if the graph of each critical monomial is confluent.
The original proof that a quadratic PBW algebra is Koszul is due to Priddy \cite[Section 5]{priddy}.

For the interested reader, it is worth mentioning that, for quadratic algebras, the concept of PBW basis is dual to the powerful notion of noncommutative Gr\"obner basis (\cite{bergman}, \cite{bucwin}). A \emph{(noncommutative) Gr\"obner basis} of a two-sided ideal $I$ of a tensor algebra is a set $G\subseteq I$ that generates $I$, and such that the leading monomials of the elements of $G$ and the leading monomials of the elements of $I$ generate the same two-sided ideal.
A quadratic algebra $A=Q(V,R)$ is \emph{PBW} if and only if the two-sided ideal $(R)$ has a quadratic (noncommutative) Gr\"obner basis (see \cite[Section 4.3.7]{lodval}).

Changing perspective, there is a family of enhanced Koszul properties based on ``divide and conquer'' strategies. We first recall a concept these properties rely on.
\begin{defn}\label{def:colon ideal}
Let $A$ be a $K$-algebra, $I\id A$ a left ideal, and $x\in A$. The \emph{colon ideal} $I:x$ is $\{a\in A\mid ax\subseteq I\}$. If $A$ is graded-commutative and $J\id A$ is another ideal, there is a more general concept of the \emph{colon ideal}: $I:J=\{a\in A\mid aJ\subseteq I\}$.
\end{defn}
\begin{rem}\label{rem:homogeneous colon ideals}
If $I\id A$ and $x\in A$ are homogeneous, then the colon ideal $I:x$ is also homogeneous. Analogously, in the graded-commutative setting, if $I\id A$ and $J\id A$ are homogeneous, then the colon ideal $I:J$ is also homogeneous.
\end{rem}

\begin{defn}\label{def:linear module}
Let $A$ be a quadratic algebra and let $X=\{u_1,\dots,u_d\}$ be a minimal system of homogeneous generators of the augmentation ideal $A_+$.
A graded left $A$-module $M$ is \emph{linear} if it admits a system of homogeneous generators $\{g_1,\dots,g_m\}$, all of the same degree, such that, for each $\{i_1,\dots,i_k\} \subseteq \{1,\dots,m\}$, the left ideal of $A$
\[
(Ag_{i_1}+\dots+Ag_{i_{k-1}})\colon g_{i_k}=\{a\in A\mid ag_{i_k}\in Ag_{i_1}+\dots+Ag_{i_{k-1}}\}
\]
is generated by a subset of $X$. Such a system is called a \emph{set of linear generators} of $M$.
\end{defn}
\begin{defn}\label{def:strongly koszul}
A quadratic algebra $A$ is called \emph{strongly Koszul} if its augmentation ideal $A_+$ admits a minimal system of homogeneous generators $X=\{u_1,\dots,u_d\}$ such that for every subset $Y\subsetneq X$, and for every $x\in X\setminus Y$, the colon ideal $(Y)\colon x$ is generated by a subset of $X$.
\end{defn}
Our definition of strongly Koszul is taken from \cite{codero}, and it differs from the original definition of \cite{hehire}. In \cite{hehire}, the system of generators is assumed to be totally ordered, and the property that $(u_{i_1},\dots,u_{i_{r-1}})\colon u_{i_r}$ is generated by a subset of $X$ is only required for $u_{i_1}<\dots<u_{i_{r-1}}<u_{i_r}$. Since in Galois cohomology there is no natural total order on generators, we prefer the given version.
\begin{rem}\label{rem:strongly koszul and linear modules}
Keeping the same notation as in the previous definitions, $A$ is strongly Koszul if and only if all left ideals $(u_{i_1},\dots,u_{i_{r-1}})$ are linear left $A$-modules.
\end{rem}
In \cite{hehire} it is also proved that, over a strongly Koszul commutative $K$-algebra with a minimal system of homogeneous generators $\{u_1,\dots,u_d\}$, any ideal of the form $(u_{i_1},\dots,u_{i_{r}})$ has a linear resolution, and in particular $A$ is Koszul.
In the general (that is, possibly noncommutative) setting, it is immediate that strong Koszulity implies the existence of a \emph{Koszul filtration}, defined below.
\begin{defn}\label{def:koszul filtration}
A collection $\mathcal{F}$ of left ideals of a quadratic algebra $A$ is a \emph{Koszul filtration} if
\begin{enumerate}
\item each ideal $I\in\mathcal F$ is generated by elements of $A_1$;
\item the zero ideal and the augmentation ideal $A_+$ belong to $\mathcal F$;
\item for each nonzero ideal $I\in \mathcal F$, there exist an ideal $J \in \mathcal F$, $J\neq I$, and an element $x\in A_1$ such that $I = J + Ax$, and the ideal
\[
J\colon x=\{a \in A\mid ax \in J\}
\]
lies in $\mathcal F$.
\end{enumerate}
\end{defn}
The concept of Koszul filtration was first introduced for commutative algebras in \cite{cotrva} as a more flexible version of strong Koszulity. In fact the definition precisely encodes the properties of strong Koszulity needed to imply the Koszulity of the relevant ideals, but the formulation is coordinate-free, allowing more choice in the collection of ideals. The definition in the general setting appeared in \cite{piontk}, as well as the proof that each ideal of a Koszul filtration is a Koszul module (there the author works with right ideals, but of course everything carries over to left ideals).

In particular, out of a Koszul filtration, we can choose ideals with any number of generators in a range from $1$ to the minimal number of generators of the algebra. This leads to the following definition, which also appeared in \cite{piontk}.
\begin{defn}\label{def:koszul flag}
Let $A$ be a quadratic algebra with a $d$-dimensional space of generators.
A sequence of left ideals $(I_0,\dots,I_d)$ of $A$ is a \emph{Koszul flag} if 
\begin{enumerate}
\item $(0)=I_0<I_1<\dots<I_d=A_+$;
\item for all $k$, there is $x_k\in A_1$, such that $I_{k+1}=I_k+(x_{k+1})$;
\item for all $k$, $I_k$ has a linear free graded resolution.
\end{enumerate}
\end{defn}

Algebras satisfying one of the preceding conditions have sufficiently many Koszul cyclic left modules. At the extreme, one may ask for the algebra to have a Koszul filtration as large as possible, so that all left ideals generated in degree $1$, or equivalently all cyclic left modules, are Koszul. In the commutative setting, this property has been introduced in \cite{conca:u-Koszul} under the name of \emph{universal Koszulity}. We extend this definition to the general setting.
\begin{defn}
A quadratic algebra $A$ is called \emph{universally Koszul} if every left ideal generated in degree $1$ has a linear resolution.
\end{defn}
As in the commutative setting in \cite{conca:u-Koszul}, we define
\[
\mathcal L(A)=\{I\id A\mid I=AI_1\}
\]
to be the set of all left ideals of $A$ generated in degree $1$, and we have the following characterisation of \uK algebras, which is proved exactly as in the commutative setting (\cite[Proposition 1.4]{conca:u-Koszul}).
\begin{prop}\label{prop:equiv to uK}
Let $A$ be a quadratic $K$-algebra. The following conditions are equivalent:
\begin{enumerate}
\item $A$ is \uK;
\item\label{cond2} for every $I\in\mathcal L(A)$, one has $\Tor_{2,j}^A (A/I, K) = 0$ for every $j > 2$ (here $A/I$ is declared to be generated in degree $0$);
\item\label{cond3} for every $J\in\mathcal L(A)$ and every $x\in A_1\setminus J$, one has $J:x\in\mathcal{L}(A)$;
\item\label{cond4} $\mathcal L(A)$ is a Koszul filtration.
\end{enumerate}
\end{prop}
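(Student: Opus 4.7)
The plan is to establish the cycle of implications $(1) \Rightarrow (2) \Rightarrow (3) \Rightarrow (4) \Rightarrow (1)$, exploiting throughout the translation between the Koszul property of ideals and the vanishing of bigraded Tor groups explained in Remark \ref{rem:koszulity and tor}. The endpoint implications are quick. For $(1) \Rightarrow (2)$, given $I \in \mathcal{L}(A)$, the short exact sequence $0 \to I \to A \to A/I \to 0$ yields the isomorphism $\Tor_{2,j}^A(K, A/I) \cong \Tor_{1,j}^A(K, I)$ since $A$ is free, and because $I$ admits a linear resolution starting in degree $1$, the right-hand side vanishes for $j \neq 2$. For $(4) \Rightarrow (1)$, I would invoke the result of Piontkovski\u\i\ recalled immediately after Definition \ref{def:koszul filtration}: every ideal belonging to a Koszul filtration has a linear resolution, so that $\mathcal{L}(A)$ being a Koszul filtration makes $A$ universally Koszul by definition.

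The central step is $(2) \Rightarrow (3)$. Fix $J \in \mathcal{L}(A)$ and $x \in A_1 \setminus J$, and set $I := J + Ax$, which is again in $\mathcal{L}(A)$. Multiplication by $x$ furnishes the short exact sequence of graded modules
\[
0 \to (J:x)(-1) \to A(-1) \xrightarrow{\,\cdot x\,} I/J \to 0,
\]
exhibiting the isomorphism $I/J \cong (A/(J:x))(-1)$ and hence the identification $\Tor_{i,j}^A(K, I/J) \cong \Tor_{i,j-1}^A(K, A/(J:x))$. Splicing this with the long exact sequence of Tor attached to $0 \to I/J \to A/J \to A/I \to 0$ in each internal degree $j > 2$, hypothesis (2) kills both $\Tor_{2,j}^A(K, A/J)$ and $\Tor_{2,j}^A(K, A/I)$, while $J \in \mathcal{L}(A)$ forces $\Tor_{1,j}^A(K, A/J) = 0$ for $j > 1$. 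The upshot is $\Tor_{1,j-1}^A(K, A/(J:x)) = 0$ for all $j > 2$, meaning that $J:x$ is generated in degree $\leq 1$; since $x \notin J$ implies $1 \notin J:x$, no constant generators occur, so $J:x \in \mathcal{L}(A)$.

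For $(3) \Rightarrow (4)$ the first two axioms of Definition \ref{def:koszul filtration} applied to $\mathcal{F} = \mathcal{L}(A)$ are immediate. For the third, given a nonzero $I \in \mathcal{L}(A)$, I pick a minimal set $\{x_1, \ldots, x_n\}$ of degree-$1$ generators and set $J := (x_2, \ldots, x_n)$, with the convention that $J = 0$ when $n = 1$; minimality forces $J \neq I$, one has $I = J + A x_1$ by construction, and $J : x_1 \in \mathcal{L}(A)$ by $(3)$. The main obstacle I anticipate is the careful bookkeeping of internal grading shifts in $(2) \Rightarrow (3)$, specifically verifying the shift by $-1$ in $I/J \cong (A/(J:x))(-1)$ and tracking how the vanishing of $\Tor_{2,\bullet}^A$ in internal degree $>2$ transfers, after this shift, to the correct Tor group and correct internal degree for $A/(J:x)$.
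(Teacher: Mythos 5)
Your proof is correct, and its skeleton --- the cycle $(1)\Rightarrow(2)\Rightarrow(3)\Rightarrow(4)\Rightarrow(1)$, the appeal to Piontkovski\u\i's result for $(4)\Rightarrow(1)$, and the choice $J=(x_2,\dots,x_n)$ in $(3)\Rightarrow(4)$ --- coincides with the paper's. The one place where the mechanics differ is $(2)\Rightarrow(3)$: the paper works directly with a minimal presentation $\phi\colon A\oplus A^{d}\twoheadrightarrow Ax+J$, applies $(2)$ to $I=Ax+J$ to conclude that the syzygy module $\ker\ \phi$ sits in homological degree $2$ of a resolution of $A/I$ and is therefore generated in internal degree $1$, and then reads off $J:x$ as the projection of $\ker\ \phi$ onto the first coordinate. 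You instead run the long exact sequence of $\Tor$ for $0\to I/J\to A/J\to A/I\to 0$ together with the shift isomorphism $I/J\cong \bigl(A/(J:x)\bigr)(-1)$ coming from multiplication by $x$; this is precisely Conca's original argument in the commutative case, which the paper says it is following, so the two routes are really the same syzygy computation in different notation. Your degree bookkeeping is right: $\Tor_{1,j}^A(K,I/J)\cong\Tor_{1,j-1}^A(K,A/(J:x))$ vanishes for $j>2$, hence $J:x$ is generated in degrees $\leq 1$, and $x\notin J$ rules out a degree-$0$ generator. Two minor remarks: in the exact segment $\Tor_{2,j}^A(K,A/I)\to\Tor_{1,j}^A(K,I/J)\to\Tor_{1,j}^A(K,A/J)$ only the two outer terms need to vanish, so invoking $\Tor_{2,j}^A(K,A/J)=0$ is superfluous; and passing from the vanishing of $\Tor_{0,j'}^A(K,J:x)$ for $j'>1$ to ``generated in degrees $\leq 1$'' uses graded Nakayama, which is available here because the algebra is connected and locally finite-dimensional.
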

\begin{proof}\
\begin{enumerate}
\item[(1)$\Rightarrow$(2)] By general principles of homological algebra, a linear resolution of $I$, which exists by definition of universal Koszulity, can be extended to a linear resolution of $A/I$:
\[
\xymatrix@R-8pt@C+8pt{
\dots\ar[r] & P_2\ar^{d_2}[r] & P_1\ar^{d_1}[r]\ar@{=}[ldd] & P_0\ar^{d_0}@{>>}[r]\ar@{=}[ldd] & I\ar@{^{(}->}^{\iota}[d]\ar@{_{(}->}^{\iota}[ldd] \\
& & & & A\ar@{>>}^{\pi}[d]\\
\dots\ar[r] & P_1\ar^{d_1}[r]\ar@{>>}[dr] & P_0\ar^{\alpha=\iota d_0}[r]\ar@{>>}_{d_0}[dr] & A\ar^{\pi}@{>>}[r] & A/I.\\
& \dots & \ker\ d_0\ar@{^{(}->}[u] & I\ar@{^{(}->}_{\iota}[u]& 
}
\]
Here $A$ is generated in degree $0$ as a left $A$-module, so that $\alpha$ maps the elements of degree $1$ (the generators of $P_0$) to elements of degree $1$ (the generators of $I$ inside $A$). In other words, the map $\alpha$ respects our convention that differentials have degree $0$.
Note that $I$ is generated in degree $1$, while $R/I$ is generated in degree $0$, so there is no need to artificially alter the degrees of the $P_i$'s in the resolution of $R/I$ (compare this with Definition \ref{def:linear resolution}). In particular, each $P_i$ is generated in degree $i+1$ in both resolutions. Now (2) is a particular case of Remark \ref{rem:koszulity and tor}.
\item[(2)$\Rightarrow$(3)] Let $J\in\mathcal L(A)$ be generated by $y_1,\dots,y_d\in A_1$ (the system of generators can be chosen to be finite because, in our assumptions, $\dim_K(A_1)<\infty$) and let $x\in A_1\setminus J$. A presentation $\phi:A\oplus A^d\twoheadrightarrow Ax+J$ defines the first step of a (minimal) free resolution
\[
\xymatrix{
Ax\oplus (Ay_1\oplus\dots\oplus Ay_d)\ar^-{d_0}[r] & Ax+J.
}
\]
The free left $A$-module on a set of generators of $\ker\ d_0$ serves as the graded module in homological degree $2$ in a free resolution of $A/(Ax+J)$. Therefore, by (2), $\ker\ d_0$ has to be generated by elements of degree $2$ in $A$. Correspondingly, the syzygy module $\ker\ \phi$ is generated by sums of elements of degree $1$ in $A$. But $J:x$ is the projection of $\ker\ \phi$ onto the first coordinate, so it has to be generated by elements of degree $1$ in $A$.
\item[(3)$\Rightarrow$(4)] The first two conditions in the definition of Koszul filtration are trivially satisfied. As regards the third, let $I=Ax_1+\dots+Ax_n\in\mathcal{L}(A)\setminus\{(0)\}$, and suppose that no proper subsets of $\{x_1,\dots,x_n\}\subseteq A_1$ generate $I$. Setting $J=Ax_2+\dots+Ax_n$, we have $I=J+Ax_1\neq J$, and $J:x\in\mathcal L(A)$ by (\ref{cond3}).
\item[(4)$\Rightarrow$(1)] This is Proposition 2.2 in \cite{piontk}.
\end{enumerate}

\end{proof}


In our treatment of enhanced Koszulity, we will often deal with extensions of algebras, and in particular with some ideals of the smaller algebra that have to be extended to the larger one, or vice versa. These two processes are defined below. 
\begin{defn}
Let $A\subseteq B$ be two $K$-algebras. Given a left ideal $I$ of $A$, we define its \emph{extension} to $B$ to be the intersection $I^e$ of all left ideals of $B$ that contain $I$. In other words, $I^e$ is the ideal of $B$ generated by $I$. Given a left ideal $J$ of $B$, we define its \emph{contraction} to $A$ to be the left ideal $J^c=J\cap A$ of $A$. In case $A_1, A_2$ are two subalgebras of $B$, we will use the notation $J^c_{A_1}$ or $J^c_{A_2}$ to differentiate between the contractions of $J$ to either subalgebra.
\end{defn}
In the same spirit, when dealing with colon ideals, when we need to specify the algebra in which they are defined, we add the name of that algebra as a subscript to the colon sign.

\section{Elementary type pro-$p$ groups}\label{sec:ET groups}
Recall that $G_F(p)$ is the maximal pro-$p$ quotient of the absolute Galois group of $F$.
The class of all finitely generated pro-$p$ groups of the form $G_F(p)$, for some field $F$ as in Hypothesis \ref{ass}, is far from being well understood. A conjectural description of such groups has been proposed in \cite{jacwar} for $p=2$, and in \cite{ido:ETC} for $p$ odd. It is called the \emph{Elementary Type Conjecture}, and it was motivated by the homonymous conjecture for Witt rings of quadratic forms (see Conjecture \ref{conj:ETC Witt}).

At present, all finitely generated pro-$p$ groups that are known to be realisable as $G_F(p)$, for some field $F$ as in Hypothesis \ref{ass} (in short: \emph{realisable}), belong to a restricted class defined inductively from two basic types of groups: free pro-$p$ groups (\cite[Section I.1.5]{serre}) and Demushkin groups (defined below). All free pro-$p$ groups are realisable (\cite[Section 4.8]{lubvdd}), most over local fields not containing primitive $p$\textsuperscript{th} roots of unity (\cite[Section II.5, Theorem 3]{serre}). On the other hand, the maximal pro-$p$ quotients of absolute Galois groups of local fields containing primitive $p$\textsuperscript{th} roots of unity are Demushkin (\cite[Section II.5, Theorem 4]{serre}), but it is not known whether all Demushkin groups are realisable.

The Elementary Type Conjecture claims that all realisable finitely generated pro-$p$ groups can be built from the previous basic groups, endowed with a character that records the group action on roots of unity. We first introduce 
\begin{defn}\label{defn:cyclochar}
A \emph{cyclotomic pair} is a couple $(G,\chi)$ made up of a pro-$p$ group $G$ and a continuous homomorphism $\chi$ from $G$ to the multiplicative group of units of $\Z_p$. We call $\chi$ a \emph{cyclotomic character}. 
The vocabulary of group properties is extended to cyclotomic pairs, so that, for example, a finitely generated cyclotomic pair is a cyclotomic pair $(G,\chi)$, such that $G$ is finitely generated as a pro-$p$ group.
\end{defn}
This terminology is justified in view of the situation when $G=G_F(p)$ for a field $F$ as in Hypothesis \ref{ass}. The action of $G_{F}(p)$ on the group $\mu_{p^\infty}$ of the roots of unity of order a power of $p$ lying in $F_{\mathrm{sep}}$ induces a homomorphism $\chi_p:G\to \Aut(\mu_{p^\infty})$. This homomorphism is known in the literature as the ($p$-adic) cyclotomic character.

A finitely generated pro-$p$ group is \emph{free} on a set $I=\{x_1,\dots,x_d\}$ if it is the inverse limit of the groups $F(I)/M$, where $F(I)$ is the discrete free group on $I$ and $M$ runs over the normal subgroups of $F(I)$ with index a (finite) power of $p$. Equivalently, free pro-$p$ groups are the free objects in the category of pro-$p$ groups (see \cite[I.1.5]{serre}, where not necessarily finitely generated free pro-$p$ groups are introduced).

A \emph{Demushkin group} is a finitely generated pro-$p$ group $G$ such that $H^2(G,\F_p)$ is a $1$-dimensional $\F_p$-vector space and the cup product induces a non-degenerate pairing
\[
H^1(G,\F_p)\times H^1(G,\F_p) \to H^2(G,\F_p).
\]
In this situation, we identify $H^2(G,\F_p)$ with $\F_p$.

The classification of Demushkin groups was achieved by J.~Labute (\cite{labute:demushkin}), building on previous contributions by S.~P.~Demushkin (\cite{demushkin1961} and \cite{demushkin1963}) and J.-P.~Serre (\cite{serre:demushkin}).
The only finite Demushkin group is $C_2$, the group of order $2$, which is also the only Demushkin group that can be generated by just $1$ element.(\cite[Proposition 3.9.10]{NSW}). It has a unique cyclotomic character with image $\{-1, +1\}$. Infinite Demushkin groups are classified by two invariants. The first is the minimal number of generators. As regards the second, each Demushkin group $G$ has a unique distinguished cyclotomic character $\chi_G$. It is characterised by the fact that the topological $G$-module $\Z_p$, where the $G$-action is induced by $\chi_G$, has the property that, for all $i\in\N\setminus\{0\}$, the canonical homomorphism $H^1(G,\Z_p/p^i\Z_p)\to H^1(G,\Z_p/p\Z_p)$ is surjective (see \cite[Proposition 6 and Theorem 4]{labute:demushkin} and also \cite{serre:demushkin}). We then define the second
invariant $q = q(G)$ to be $2$ if $\mathrm{Im}\chi_G = \{-1, +1\}$, and to be the maximal power $p^k$ such that $\mathrm{Im}\chi_G\subseteq 1 + p^k \Z_p$ otherwise.
\begin{rem}\label{rem:cohomology of Demushkin}
The classification of Demushkin groups also provides an explicit description of their cohomology. Note that, since $C_2$ is finite, its Galois cohomology with $\F_2$ coefficients coincides with its classical (discrete) cohomology with $\F_2$ coefficients, which is the polynomial ring in one variable over $\F_2$ (\cite[Section III.1, Example 2]{brown}). On the other hand, infinite Demushkin groups have their Galois cohomology with $\F_p$ coefficients concentrated in degrees $0,1,2$. So, by their definition, their cohomology with $\F_p$ coefficients is completely determined by the effect of the cup product on a basis of the first cohomology group.
Let $d$ denote the cardinality of a minimal set of generators of an infinite Demushkin group $G$. There are $3$ cases in which $G$ can fall (see \cite[Proposition 4]{labute:demushkin}).
\begin{enumerate}
\item Suppose first that $G$ is a Demushkin group with second invariant $q\neq2$.
In this case, the minimal number of generators is necessarily even: $d=2k$ with $k\in\N\setminus\{0\}$. Moreover, there exists a symplectic basis $X=\{a_1,\dots,a_{2k}\}$ of $H^1(G,\F_p)$ such that
\[
\begin{array}{l}
a_1\cup a_2=a_3\cup a_4=\dots=a_{2k-1}\cup a_{2k}=1,\\
a_2\cup a_1=a_4\cup a_3=\dots=a_{2k}\cup a_{2k-1}=-1,\\
a_i\cup a_j=0 \mbox{ for all other }i,j.
\end{array}
\]
These Demushkin groups are those studied by Demushkin in \cite{demushkin1961} and \cite{demushkin1963}.
\item Now suppose that $G$ is a Demushkin pro-$2$ group on $d=2k+1$ generators, $k\in\N\setminus\{0\}$, with second invariant $q=2$.
Then there is a basis $X=\{a_1,\dots,a_{2k+1}\}$ of $H^1(G,\F_2)$ such that
\[
\begin{array}{l}
a_1\cup a_1=a_2\cup a_3=a_4\cup a_5=\dots=a_{2k}\cup a_{2k+1}=1,\\
a_3\cup a_2=a_5\cup a_4=\dots=a_{2k+1}\cup a_{2k}=1,\\
a_i\cup a_j=0 \mbox{ for all other }i,j.
\end{array}
\]
These Demushkin groups are those studied by Serre in \cite{serre:demushkin}.
\item Suppose finally that $G$ is a Demushkin pro-$2$ group on $d=2k$ generators, $k\in\N\setminus\{0\}$, with second invariant $q=2$.
Then there is a basis $X=\{a_1,\dots,a_{2k}\}$ of $H^1(G,\F_2)$ such that
\[
\begin{array}{l}
a_1\cup a_1=a_1\cup a_2=a_3\cup a_4=\dots=a_{2k-1}\cup a_{2k}=1,\\
a_2\cup a_1=a_4\cup a_3=\dots=a_{2k}\cup a_{2k-1}=1,\\
a_i\cup a_j=0 \mbox{ for all other }i,j.
\end{array}
\]
These Demushkin groups are those studied by Labute in \cite{labute:demushkin}.
\end{enumerate}
\end{rem}

The \emph{free product} of two cyclotomic pairs $(G_1,\chi_1)$ and $(G_2, \chi_2)$ is the cyclotomic pair $(G_1\ast_pG_2, \chi_1\ast_p\chi_2)$, where $G_1\ast_pG_2$ is the coproduct of $G_1$ and $G_2$ in the category of pro-$p$ groups, and $\chi_1\ast_p\chi_2$ is the character induced by $\chi_1$ and $\chi_2$ via the universal property of the coproduct.

The \emph{cyclotomic semidirect product} of the cyclotomic pair $(G,\chi)$ with $\Z_p^m$, $m\in\N\setminus\{0\}$, is the cyclotomic pair $(\Z_p^m\rtimes G, \chi\circ\pi)$, where $\Z_p^m\rtimes G$ is defined by the rule $g(x_1,\dots,x_m)g^{-1}=(\chi(g)x_1,\dots,\chi(g)x_m)$ for all $g \in G$ and $(x_1,\dots,x_m)\in \Z_p^m$, and $\pi:\Z_p^m\rtimes G\to G$ is the canonical projection.

We are now ready to state the \textbf{Elementary Type Conjecture for pro-$p$ groups}.
At first, we shall formulate it for finitely generated pro-$p$ groups of the form $G_F(p)$ when either $p$ is odd or $p=2$ and $\sqrt{-1}\in F$. 
The reason for the additional hypothesis $\sqrt{-1}\in F$ in the case $p=2$ lies in the different behaviour of the group $\mu_{2^\infty}$ of the roots of unity of order a power of $2$ in $F_{\mathrm{sep}}$, compared to the groups  $\mu_{p^\infty}$ for $p$ odd. In fact, we can identify $\Aut(\mu_{2^\infty})\cong\Z_2^*=\{\pm1\}\times\gen 5 \cong \{\pm1\}\times \Z_2$. If $p=2$ and $\sqrt{-1}\in F$, then the cyclotomic character $\chi_2:G_F(2)\to \Aut(\mu_{2^\infty})$ has image 
\begin{equation}\label{eq:good cyclochar}
\im\ \chi_2\subseteq\{+1\}\times\gen 5\cong\gen 5\cong\Z_2,
\end{equation}
similarly to what happens for $p$ odd. If instead $p=2$ and $\sqrt{-1}\notin F$, then the values of $\chi_2$ would be in general in $\{\pm 1\}\times \Z_2$ and so the description of the action of $G_F(2)$ on the roots of unity would be more complicated. As a consequence, the Elementary Type Conjecture on the level of groups, which relies on the aforementioned group action, would be more complicated as well (see \cite{jacwar}). We can avoid this more complicated case here because in Section \ref{sec:unconditional} we consider a related but distinct Elementary Type Conjecture, on the level of Witt rings of quadratic forms over $F$ rather than on the level of $G_F(p)$. This conjecture is sufficient to describe \emph{all} known Galois cohomology algebras $H^*(G_F(2),\F_2)$, including the case $p=2$ and $\sqrt{-1}\in F$.

As we noted above, it is not known whether all Demushkin groups are of the form $G_F(p)$. However, we are able to prove our results on Koszulity of their cohomology regardless of whether they are realisable or not. Hence our theorems are formulated in terms of the following class of groups, which may possibly be larger than necessary from a Galois-theoretic perspective.

\begin{defn}\label{defi:ETC}
The class $\mathcal{E}_p$ of \emph{elementary type} cyclotomic pairs is the smallest class of cyclotomic pairs such that
\begin{enumerate}
 \item[(a)] any pair $(S,\chi)$, with $S$ a finitely generated free pro-$p$ group and $\chi$ an arbitrary cyclotomic character,
 is in $\mathcal{E}_p$;  the pair $(1,1)$ consisting of the trivial group and the trivial character is not excluded;
 \item[(b)] any pair $(G,\chi)$, with $G$ a Demushkin group and $\chi$ its unique cyclotomic character, provided $\im\ \chi\subseteq1+p\Z_p$ if $p$ is odd, or $\im\ \chi\subseteq1+4\Z_2$ if $p=2$ (this condition on the cyclotomic character is the translation of \eqref{eq:good cyclochar} in the abstract group-theoretic language, and it implies that the invariant $q$ of $G$ is not 2);
\item[(c)] if $(G_1,\chi_1),(G_2,\chi_2)\in\mathcal{E}_p$, then the free product
$(G_1\ast_pG_2,\chi_1\ast_p\chi_2)$ is also in $\mathcal{E}_p$;
\item[(d)] if $(G,\chi)\in\mathcal{E}_p$, then, for any positive integer $m$, the cyclotomic semi-direct product
$(\Z_p^m\rtimes G,\chi\circ\pi)$ is also in $\mathcal{E}_p$. 
\end{enumerate}
%
%
An \emph{elementary type} pro-$p$ group (\emph{ET group} for short) is a group $G$ appearing in a pair in $\mathcal{E}_p$.
\end{defn}
The \textbf{general Elementary Type Conjecture for pro-$p$ groups} states that all finitely generated pro-$p$ groups of the form $G_F(p)$, for $p$ odd and $F$ a field containing a primitive $p$\textsuperscript{th} root of unity, or for $p=2$ and $\sqrt{-1}\in F$, are ET groups.
Additional details may be found in \cite{Koszul1}.

There is a parallel inductive description of the Galois cohomology algebras of ET groups in terms of two basic operations.
\begin{defn}\label{def:operations on algebras}
Let $A=Q(V_A, R_A)$ and $B=Q(V_B,R_B)$ be two quadratic algebras over a field $K$.
\begin{enumerate}
\item The \emph{direct sum} of $A$ and $B$ is the quadratic algebra
\[ A\sqcap B=\frac{T(A_1\oplus B_1)}{(R)},\quad 
\text{with }R=R_A\oplus R_B\oplus(A_1\otimes B_1)\oplus(B_1\otimes A_1). \]
\item The \emph{skew-symmetric tensor product} of $A$ and $B$ is the quadratic algebra
\[A\otimes^{-1} B=\frac{T(A_1\oplus B_1)}{(R)},\quad \text{with }R=R_A\oplus R_B\oplus \gen{ab+ba\mid a\in A_1,b\in B_1}.\]
\item Suppose that $A$ has a distinguished element $t\in A_1$ such that $t+t=0$, and let $X_J=\{x_j\mid j\in J\}$ be a set of distinct symbols not in $A$. The \emph{twisted extension} of $A$ by $X_J$ over $t$ is the quadratic $K$-algebra $A(t\mid X_J)$ with space of generators $\Span_{K}(A_1\cup\{x_j\mid j\in J\})$ and space of relators \[\Span_{K}(R\cup \{x_ix_j+x_jx_i,x_ja+ax_j,x_j^2-tx_j\mid i,j\in J, a\in A_1\})\]. When $X_J=\{x_1,\dots,x_m\}$, we will use the notation $A(t\mid x_1,\dots,x_m)$.
\end{enumerate}
\end{defn}
\begin{rem}\label{rem:twisted ext varia}
Note that the condition $t+t=0$ in the definition of twisted extension forces $t=0$ whenever the characteristic of the ground field differs from $2$. If the characteristic is $2$, then $t$ may or may not be $0$.
A twisted extension $A(0\mid X_J)$ with $t=0$ is the same as the skew-symmetric tensor product of $A$ with the exterior algebra $\Lambda \gen{X_J}$.
Moreover, $A(0\mid x_1,\dots,x_m)$ admits an additional $\oplus_{i=1}^m\Z$-grading, characterised by
\begin{equation}\label{eq:x-grading}
A(0\mid x_1,\dots,x_m)_{(\varepsilon_1,\dots,\varepsilon_m)}=\Span_{K}\{ax_1^{\varepsilon_1}\cdots x_m^{\varepsilon_m}\mid a\in A\}.
\end{equation}
The defining relations make the algebra concentrated in degrees $\{0,1\}^m$.

In particular, $A(0\mid x)$ has the additional $\Z$-grading 
\[
A(0\mid x)_{0}=A, A(0\mid x)_{1}=Ax, A(0\mid x)_{k}=0 \mbox{ for }k\geq 2.
\]
Independently of whether or not $t=0$, the elements of a twisted extension $A(t\mid x)$ can be written in the \emph{normal form} $u+vx$ for suitable $u,v\in A$. By the shape of the defining relations of $A(t\mid x)$, a monomial in the left ideal $(x)$ can be written as a word with only one $x$, but it is not possible to write it as a word without $x$. As a consequence, the coefficients $u$ and $v$ above are uniquely determined: if $u+vx=u'+v'x$, then $u-u'=(v'-v)x$; since the left-hand-side belongs to $A$, while a multiple of $x$ belongs to $A$ if and only if it is $0$, we conclude that $v=v'$ and $u=u'$.
\end{rem}
If $G_1$ and $G_2$ are pro-$p$ groups, then $H^\bullet(G_1\ast_p G_2,\F_p)\cong H^\bullet(G_1,\F_p)\sqcap H^\bullet(G_2,\F_p)$ (see \cite[Theorem 4.1.4]{NSW}), and if $G$ is an ET pro-$p$ group, then $H^\bullet(\Z_p^m\rtimes G,\F_p)\cong H^\bullet(G,\F_p)(0\mid x_1,\dots,x_m)$ (see \cite[Section 5.4]{Koszul1}). Therefore, the cohomology $H^\bullet(G,\F_p)$ of an ET group can be obtained via a finite sequence of direct sums and skew-symmetric tensor products of cohomology algebras with $\F_p$ coefficients of free pro-$p$ groups and cohomology algebras with $\F_p$ coefficients of ET Demushkin pro-$p$ groups.

The results in Section \ref{ssec:witt} imply an analogous description of the Galois cohomology algebra of finitely generated pro-$2$ groups of the form $G_F(2)$ which may not necessarily satisfy Definition \ref{defi:ETC}. One has simply to allow general twisted extensions over an element $t$ that is not necessarily $0$ (see Proposition \ref{prop:graded witt ring group construction}). Through this idea, for pro-$2$ groups we may formulate a weaker form of Elementary Type Conjecture, which gathers the case of $G_F(2)$ with $\sqrt{-1}\in F$ and the case of $G_F(2)$ with $\sqrt{-1}\notin F$ under the same umbrella.

A \emph{Weak elementary type} (WET) pro-$2$ group is a finitely generated pro-$2$ group $G$ such that $H^\bullet(G,\F_2)$ can be obtained via a finite sequence of direct sums and twisted extensions of cohomology algebras with $\F_2$ coefficients of free pro-$2$ groups and cohomology algebras with $\F_2$ coefficients of Demushkin pro-$2$ groups.
The \textbf{Weak Elementary Type Conjecture for pro-$2$ groups} claims that all finitely generated pro-$2$ groups of the form $G_F(2)$, for some field $F$ of characteristic not $2$, are WET groups.

To sum up, on the level of cohomology we can reunify the various forms of Elementary Type Conjecture into the description of a single family of algebras which encompasses the Galois cohomology algebras of any known $G_F(p)$ satisfying the Standing Hypothesis \ref{ass}.
%
%
%
\section{Universal Koszulity of ET groups}\label{sec:uK}
\subsection{Free pro-$p$ groups}\label{ssec:uK free}
\begin{prop}\label{prop:free uK}
Suppose that $G$ is a finitely generated free pro-$p$ group. Then $H=H^\bullet(G,\F_p)$ is universally Koszul.
\end{prop}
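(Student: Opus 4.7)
The plan is to first identify $H=H^\bullet(G,\F_p)$ explicitly. Since finitely generated free pro-$p$ groups have cohomological dimension $1$, the cohomology is concentrated in degrees $0$ and $1$: we have $H_0=\F_p$, $H_1=V$ a finite-dimensional $\F_p$-vector space, and $H_i=0$ for $i\geq 2$. In particular, every cup product of degree-$1$ classes lands in $H_2=0$, so the multiplication on $V$ is identically zero, and $H=Q(V,V\otimes V)$ is quadratic in the sense of the paper.

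To establish universal Koszulity I will invoke condition \eqref{cond3} of Proposition \ref{prop:equiv to uK}: it suffices to check that for every left ideal $J\in\mathcal L(H)$ and every $x\in H_1\setminus J$, the colon ideal $J:x$ lies in $\mathcal L(H)$. Because $V\cdot V=0$, any $J\in\mathcal L(H)$ coincides with its degree-$1$ component $W=J\cap V$ (the contributions $H_+\cdot W$ vanish), so $J$ is simply a vector subspace of $V$. A quick degree-by-degree computation then gives $J:x=V=H_+$: in degree $0$, the relation $\alpha x\in W$ forces $\alpha=0$ since $x\notin W$; in degree $1$, every $v\in V$ satisfies $vx=0\in J$, so the whole of $V$ is absorbed; higher degrees are trivially $0$. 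Since $H_+$ is evidently generated by $H_1$, we conclude $J:x\in\mathcal L(H)$, and Proposition \ref{prop:equiv to uK} yields the result.

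There is no real obstacle here: the algebra structure is so degenerate that universal Koszulity falls out immediately from the vanishing of cohomology in degrees $\geq 2$. One could alternatively observe that every $J\in\mathcal L(H)$, being a vector subspace of $V$, admits a transparent linear resolution whose $i$-th term is a free module generated in degree $i+1$ (with differentials determined by multiplication with a fixed basis of $V$), but the colon-ideal criterion gives the most concise route.
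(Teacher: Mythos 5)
Your proof is correct and follows essentially the same route as the paper: both observe that the cohomology of a finitely generated free pro-$p$ group is concentrated in degrees $0$ and $1$, deduce that $J:x=H_+$ for every $J\in\mathcal L(H)$ and $x\in H_1\setminus J$, and conclude via condition \eqref{cond3} of Proposition \ref{prop:equiv to uK}. The extra degree-by-degree verification you supply is a harmless elaboration of the paper's one-line argument.
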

\begin{proof}
Since the cohomology is concentrated in degrees $0$ and $1$, the product of any two elements of positive degree is $0$. Hence, $I:x=H_+$ for all $I\id H, I\neq H_+$ and for all $x\in H_1\setminus I$. The claim follows from condition \ref{cond3} of Proposition \ref{prop:equiv to uK}.
\end{proof}

\subsection{Demushkin groups}\label{ssec:uK Demushkin}
\begin{prop}\label{prop:C_2 uK}
The cohomology $H=H^\bullet(C_2,\F_2)$ of the group of order $2$ is universally Koszul.
\end{prop}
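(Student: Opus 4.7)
The plan is to reduce the statement to a concrete computation using the criterion of Proposition \ref{prop:equiv to uK}. By the remark on the cohomology of Demushkin groups, $H = H^\bullet(C_2,\F_2)$ is the polynomial ring $\F_2[t]$ in one degree-$1$ variable $t$. In particular $H_1 = \F_2 \cdot t$ is one-dimensional, and $H$ is a (graded) integral domain.

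I will verify condition \ref{cond3} of Proposition \ref{prop:equiv to uK}, namely that for every $J \in \mathcal{L}(H)$ and every $x \in H_1 \setminus J$, one has $J:x \in \mathcal{L}(H)$. Since $\dim_{\F_2} H_1 = 1$, the set $\mathcal{L}(H)$ consists of just two ideals: the zero ideal $(0)$ and the augmentation ideal $H_+ = (t)$. If $J = H_+$, then $H_1 \setminus J = \emptyset$, so the condition is vacuously satisfied. If $J = (0)$, the only available $x \in H_1 \setminus J$ is (a scalar multiple of) $t$ itself; since $H$ is a domain, $(0):t = (0) \in \mathcal{L}(H)$.

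This completes the check. There is no real obstacle here; the whole point is that for the one-variable polynomial algebra there are only two candidate ideals to test, and both tests are trivial. The proposition therefore follows immediately from Proposition \ref{prop:equiv to uK}.
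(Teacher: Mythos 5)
Your proof is correct and follows essentially the same route as the paper: the paper likewise identifies $H=\F_2[t]$, observes that $I:x=(0)$ for every $I\in\mathcal{L}(H)\setminus\{H_+\}$ and $x\in H_1\setminus I$, and invokes condition \ref{cond3} of Proposition \ref{prop:equiv to uK}. Your version merely spells out explicitly that $\mathcal{L}(H)=\{(0),H_+\}$ and that the $J=H_+$ case is vacuous, which the paper leaves implicit.
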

\begin{proof}
As recalled in Remark \ref{rem:cohomology of Demushkin}, $H=\F_2[t]$ is a free algebra on one generator. Therefore, $I:x=(0)$ for all $I\in\mathcal{L}(H)\setminus\{H_+\}$ and all $x\in H_1\setminus I$. The claim follows from condition \ref{cond3} of Proposition \ref{prop:equiv to uK}.
\end{proof}

\begin{prop}\label{prop:demushkin uK}
Suppose that $G$ is an infinite Demushkin pro-$p$ group. Then $H=H^\bullet(G,\F_p)$ is universally Koszul.
\end{prop}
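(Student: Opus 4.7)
The plan is to verify universal Koszulity via the equivalence in Proposition~\ref{prop:equiv to uK}: specifically, I would use condition~(\ref{cond3}) and show that for every $J\in\mathcal{L}(H)$ and every $x\in H_1\setminus J$, the colon ideal $J\colon x$ again belongs to $\mathcal{L}(H)$. The structural input, taken from Remark~\ref{rem:cohomology of Demushkin}, is that $H$ is concentrated in degrees $0$, $1$, $2$, that $H_2\cong\F_p$ is one-dimensional, that the cup product $H_1\otimes H_1\to H_2$ is a non-degenerate bilinear pairing, and that for an infinite Demushkin group the minimal number of generators satisfies $d=\dim H_1\geq 2$ (since the only Demushkin group on one generator is $C_2$).

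First I would decompose $J\colon x$ into its homogeneous components. Since $H_i=0$ for $i\geq 3$, only degrees $0$, $1$, $2$ contribute. The degree-$0$ component is zero because $x\notin J$, while the degree-$2$ component is automatically the whole of $H_2$, because multiplication by $x$ sends $H_2$ into $H_3=0\subseteq J$. Hence checking $J\colon x\in\mathcal{L}(H)$ reduces entirely to understanding $(J\colon x)_1=\{a\in H_1\mid ax\in J_2\}$ and verifying that the ideal it generates already absorbs the one-dimensional space $H_2$.

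The main dichotomy is whether $J$ is zero or not, which translates into a dichotomy on $J_2\subseteq H_2$. If $J\neq 0$, then $J$ contains some nonzero $y\in H_1$; non-degeneracy of the cup product forces $H_1\cdot y=H_2$, so $J_2=H_2$; consequently $(J\colon x)_1=H_1$ and $J\colon x=H_+\in\mathcal{L}(H)$. If $J=0$, then $(J\colon x)_1$ equals the left cup-annihilator $x^\perp\subseteq H_1$ of $x$. Here $d\geq 2$ guarantees $\dim x^\perp\geq 1$, and a second application of non-degeneracy to any nonzero $a\in x^\perp$ produces $b\in H_1$ with $ba\neq 0$, so $H_1\cdot x^\perp=H_2$. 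Thus the ideal generated by $x^\perp$ coincides with $x^\perp\oplus H_2=J\colon x$.

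The only delicate point is the case $J=0$, where one must guarantee that $x^\perp$ is nontrivial; this is exactly where the hypothesis that $G$ is \emph{infinite} intervenes, via $d\geq 2$. Everything else is a short computation that follows directly from the Poincar\'e duality structure of $H^\bullet(G,\F_p)$ recorded in Remark~\ref{rem:cohomology of Demushkin}, and the finite-dimensionality of $H$.
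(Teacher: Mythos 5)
Your proof is correct and takes essentially the same approach as the paper: both verify condition~(\ref{cond3}) of Proposition~\ref{prop:equiv to uK} using the non-degeneracy of the cup product together with the fact that $\dim_{\F_p} H_1\geq 2$ for an infinite Demushkin group. The only cosmetic difference is that the paper reduces everything to the case $I=(0)$ via the inclusion $(0):x\subseteq I:x$ and then applies a small general lemma about ideals $J$ with $H_1J\supseteq H_2$, whereas you split into the cases $J=0$ and $J\neq 0$ and compute the homogeneous components of $J:x$ directly.
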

\begin{proof}
Let $I\in\mathcal{L}(H)\setminus\{H_+\}$ and let $x\in H_1\setminus I$. Let us first address the case $I=(0)$. The ideal $(0):x$ is made of all of the solutions of the equation $ax=0$ in the variable $a\in H$. This equation has a nonzero solution in $H_1$ for any $x$, by the definition of a Demushkin group (see also Remark \ref{rem:cohomology of Demushkin}). So $(0):x\neq(0)$, whence $H_1((0):x)\supseteq H_2$ by nondegeneracy of the cup product. Now, for a general $I$, we have $(0):x\subseteq I:x$, so again $H_1(I:x)\supseteq H_2$.

But in general, if $J$ is a left ideal of a graded algebra $A$ generated in degree $1$ such that $A_1J\supseteq A_2$, then $J=AJ_1$. In fact, using the hypothesis that $A$ is generated in degree $1$, for all $n\in\N$,
\[
A_{n+1}=A_{n-1}A_2\subseteq A_{n-1}A_1J=A_nJ.
\]
Now if $a$ is a homogeneous element of $J_{n+1}$ for some $n\in\N$, then $a\in A_{n+1}\subseteq A_nJ$, and so, taking the degree of $a$ into account, $a\in A_nJ_1$.

This shows that condition \ref{cond3} of Proposition \ref{prop:equiv to uK} holds.
\end{proof}

\subsection{Direct sum}
\begin{prop}\label{prop:direct sum universally Koszul}
Let $A$ and $B$ be universally Koszul algebras over an arbitrary field $K$. Then the direct sum $C=A\sqcap B$ is universally Koszul.
\end{prop}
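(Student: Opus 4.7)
My approach is to verify condition (\ref{cond3}) of Proposition \ref{prop:equiv to uK}: for every $J \in \mathcal{L}(C)$ and every $x \in C_1 \setminus J$, I will show that the colon ideal $J\colon x$ is generated in degree $1$. The structure of $C = A \sqcap B$ is governed by the fact that its defining relations include all of $A_1 \otimes B_1$ and $B_1 \otimes A_1$; in particular, every tensor word mixing generators of $A$ and of $B$ is zero in $C$. Consequently $C$ has the vector space decomposition $C = K \oplus A_+ \oplus B_+$, the images of $A$ and $B$ sit inside $C$ as subalgebras, $C_n = A_n \oplus B_n$ for $n \geq 1$, and $A_+ \cdot B_+ = B_+ \cdot A_+ = 0$. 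This last vanishing is the structural fact I will lean on throughout.

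The first step is to record $J$ in these coordinates. Writing $\pi_A, \pi_B$ for the projections of $C_1 = A_1 \oplus B_1$ onto $A_1$ and $B_1$, set $U = \pi_A(J_1)$ and $V = \pi_B(J_1)$. A direct calculation using $A_+ \cdot B_+ = 0$ yields $J_{n+1} = A_n U + B_n V$ for every $n \geq 1$; equivalently, in positive degrees $J$ coincides with the sum of the $C$-ideals generated by $U$ and by $V$. The second step is to compute $J\colon x$. Writing $x = x_A + x_B$ with $x_A \in A_1$ and $x_B \in B_1$, a homogeneous $c \in C_n$ with $n \geq 1$ splits as $c = \alpha + \beta$ with $\alpha \in A_n$, $\beta \in B_n$, and the cross-terms vanish: $cx = \alpha x_A + \beta x_B$. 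Since $A_{n+1}$ and $B_{n+1}$ intersect trivially in $C_{n+1}$, the condition $cx \in J_{n+1} = A_n U + B_n V$ decouples into $\alpha x_A \in AU$ (in $A$) and $\beta x_B \in BV$ (in $B$). In degree $0$, $c_0 = k \in K$ forces $kx \in J_1$, hence $k = 0$ since $x \notin J_1$. Putting these pieces together, $J \colon x = N_A^{\ast} + N_B^{\ast}$, where $N_A^{\ast} = (AU \colon_A x_A) \cap A_+$ and $N_B^{\ast}$ is defined symmetrically.

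At this point the hypothesis that $A$ and $B$ are universally Koszul enters. Since $AU \in \mathcal{L}(A)$, Proposition \ref{prop:equiv to uK}(\ref{cond3}) applied to $A$ gives two alternatives: either $x_A \in U$, in which case $(AU \colon_A x_A) = A$ and $N_A^{\ast} = A_+$; or $x_A \notin U$, in which case $(AU \colon_A x_A)$ belongs to $\mathcal{L}(A)$ and is automatically contained in $A_+$. In both cases $N_A^{\ast}$ is generated in degree $1$ as a left ideal of $A$. The key bridge back to $C$ is that, for any $W \subseteq A_1$, the left $C$-ideal generated by $W$ coincides as a subset of $C$ with the left $A$-ideal $AW$; this is immediate from $B_+ \cdot A_1 = 0$. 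Hence $N_A^{\ast}$ is a left $C$-ideal generated in degree $1$, and $N_B^{\ast}$ is handled identically. Therefore $J \colon x$ belongs to $\mathcal{L}(C)$, and $C$ is universally Koszul. The main point of care in the plan is not any single difficult step, but the repeated use of $A_+ \cdot B_+ = 0$ to pass between statements inside $A$ (or $B$) and the corresponding statements inside $C$, together with the clean case distinctions $x_A \in U$ vs.\ $x_A \notin U$ and their analogues on the $B$ side.
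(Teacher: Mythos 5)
Your proof is correct and follows essentially the same route as the paper's: both verify condition (\ref{cond3}) of Proposition \ref{prop:equiv to uK} by exploiting $A_+B_+=B_+A_+=0$ to split the colon ideal into an $A$-part and a $B$-part, each of which is either a colon ideal in the factor (handled by its universal Koszulity) or all of $A_+$, resp.\ $B_+$. One cosmetic slip: the aside ``in positive degrees $J$ coincides with the sum of the $C$-ideals generated by $U$ and by $V$'' fails in degree $1$ (e.g.\ for $J=C(a+b)$ with $a,b$ nonzero, $J_1$ is one-dimensional while $U\oplus V$ is two-dimensional), but your argument only ever invokes the correct statement $J_{n+1}=A_nU+B_nV$ for $n\geq 1$ and treats the degree-$0$ component of $c$ separately via $J_1$ itself, so nothing breaks.
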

\begin{proof}
The proof is almost exactly the same as in the commutative case (\cite[Lemma 1.6(3)]{conca:u-Koszul}).
Let $I\in\mathcal{L}(C)\setminus \{C_+\}$ and let $x\in C_1=A_1\oplus B_1\setminus I$. Write $I=(a_1+b_1,\dots,a_k+b_k)$ and $x=a+b$, with $a_i,a\in A_1$ and $b_i, b\in B_1$. Set $J_A=(a_1,\dots,a_k)$, $J_B=(b_1,\dots,b_k)$ and $J=J_A\!\,^e+J_B^e=J_A+J_B\id C$. Then $I\subseteq J$. Moreover, if $c\in J_i$ for $i\geq 2$, then $c\in I_i$ because $A_+$ annihilates $B_+$ and conversely.
We claim that $I:x=(J:x)\cap C_+$. In fact, on the one hand, if $c\in C$ is such that $cx\in I$, then $cx\in J$, so $c\in J:x$, and $c\in C_+$ since by assumption $x\notin I$. On the other hand, $c\in J:x$ and $c\in C_+$ imply $cx\in\oplus_{i\geq 2}J_i=\oplus_{i\geq 2}I_i$, so $c\in I:x$.

As a consequence, if $x\in J$, then $I:x=C_+\in\mathcal L(C)$. Otherwise, $I:x=J:x$, and there are three cases to consider.
\begin{enumerate}
\item If $a\notin J_A$ and $b\notin J_B$, then $J:x=(J_A:_Aa)^e+(J_B:_Bb)^e$.
Indeed, let $c\in J:x$. Using Remark \ref{rem:homogeneous colon ideals}, we may assume that $c$ is homogeneous, and $c\in C_+$ since $x\notin J$. Then $cx\in J_A+J_B$. Writing $c=c_A+c_B$ with $c_A\in A_+$ and $c_B\in B_+$, we obtain $cx=(c_A+c_B)a+(c_A+c_B)b=c_Aa+c_Bb\in J_A+J_B$. Hence, $c_Aa\in J_A$ and $c_Bb\in J_B$. Therefore, $c_A\in J_A:_Aa$ and $c_B\in J_B:_Bb$, whence $c\in(J_A:_Aa)^e+(J_B:_Bb)^e$. Conversely, if $c=c_A+c_B\in(J_A:_Aa)^e+(J_B:_Bb)^e$, then $c_A\in A_+$ and $c_B\in B_+$, as $a\notin J_A$ and $b\notin J_B$. Hence, $cx=c_Aa+c_Bb$ with $c_Aa=ca\in J_A$ and $c_Bb=cb\in J_B$. Therefore, $cx\in J_A+J_B$, or equivalently, $c\in J:x$.
\item If $a\in J_A$ and $b\notin J_B$, then $J:x=(A_+)^e+(J_B:_Bb)^e$. In fact, if $c\in J:x$, we may again assume that $c$ is homogeneous, by Remark \ref{rem:homogeneous colon ideals}, and that $c\in C_+$, because $x\notin J$. Then with the same notation as before, $cx=c_Aa+c_Bb\in J_A+J_B$. Since $a\in J_A$, for all $c_A\in A_+$, $c_Aa\in J_A$, hence $c_Bb\in (J_A+J_B)\cap B=J_B$. To sum up, $c\in (A_+)^e+(J_B:_Bb)^e$. The reverse inclusion is obvious.
\item If $a\notin J_A$ and $b\in J_B$, then $J:x=(J_A:_Aa)^e+(B_+)^e$. The proof is analogous to the previous case.
\end{enumerate}
In all cases, since $J_A:_Aa\in\mathcal{L}(A)\ni A_+$ and $J_B:_Bb\in\mathcal L(B)\ni B_+$, we deduce that $J:x\in\mathcal L(C)$. The statement now follows from condition \ref{cond3} of Proposition \ref{prop:equiv to uK}.
\end{proof}

\subsection{Twisted extension}
\begin{prop}\label{prop:twisted extension universally Koszul}
Let $A=Q(V,R)$ be a universally Koszul algebra over an arbitrary field $K$. Then the twisted extension $B:=A(t\mid x_1,\dots,x_m)$ is universally Koszul.
\end{prop}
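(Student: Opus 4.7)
The plan is to verify condition~(\ref{cond3}) of Proposition~\ref{prop:equiv to uK}: for every $J\in\mathcal L(B)$ and every $y\in B_1\setminus J$, the colon ideal $J:_B y$ lies in $\mathcal L(B)$.

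First I would reduce to the one-variable case $m=1$. The algebra $A(t\mid x_1,\dots,x_m)$ is naturally isomorphic to the iterated extension $A(t\mid x_1,\dots,x_{m-1})(t\mid x_m)$, since the defining relations on both sides coincide and $t$ remains a degree-$1$ element satisfying $t+t=0$ at each intermediate step. An induction on $m$ then reduces the claim to $B=A(t\mid x)$, using the case $m=1$ applied to the universally Koszul algebra $A(t\mid x_1,\dots,x_{m-1})$ as the new base.

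For $m=1$, I would work with the normal-form decomposition $B=A\oplus Ax$ from Remark~\ref{rem:twisted ext varia}, so that $B_1=A_1\oplus Kx$. Every generator of $J\in\mathcal L(B)$ has the shape $a_i+\lambda_i x$ with $a_i\in A_1$ and $\lambda_i\in K$, and elementary Gaussian elimination on the scalars $\lambda_i$ puts $J$ into one of two standard shapes:
\[
J=I^e\quad\text{for some } I\in\mathcal L(A),\qquad\text{or}\qquad J=I^e+B(a_0+x)\quad\text{for some } I\in\mathcal L(A),\ a_0\in A_1.
\]
Similarly write $y=a+\lambda x$ with $a\in A_1$, $\lambda\in K$. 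Using $xi=\pm ix$ for homogeneous $i\in I$ one checks that $I^e=I+Ix$ as $K$-subspaces of $B$, which gives a transparent description of membership in $I^e$.

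The core computation is the identity
\[
(u+vx)(a+\lambda x)=ua+(\lambda u-va+\lambda vt)\,x\qquad (u,v\in A),
\]
obtained via $xa=-ax$ and $x^2=tx$. Translating the condition $(u+vx)y\in J$ through this identity and through the analogous normal form for elements of $B(a_0+x)$, membership in $J$ becomes a finite linear system over $A$ whose coefficients involve $a$, $\lambda$, $t$ and possibly $a_0$. The hypothesis that $A$ is universally Koszul is the crucial input: in the form of condition~(\ref{cond3}) of Proposition~\ref{prop:equiv to uK} applied inside $A$, it ensures that every colon ideal $I':_A a'$ with $I'\in\mathcal L(A)$ and $a'\in A_1$ is again in $\mathcal L(A)$. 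Reading off the solutions of the linear system in each subcase, I would show that $J:_B y$ again has one of the two standard shapes $(I')^e$ or $(I')^e+B(a'+x)$ with $I'\in\mathcal L(A)$ and $a'\in A_1$, both of which lie in $\mathcal L(B)$.

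The main obstacle will be the most entangled subcase, in which $J=I^e+B(a_0+x)$ and $y=a+\lambda x$ with $\lambda\neq 0$ simultaneously: the membership conditions then couple $u$ and $v$ through $a$, $a_0$ and $t$, and one must check that the candidate degree-$1$ generators really exhaust the whole colon ideal rather than cutting out a proper subideal. A secondary subtlety comes from the two regimes allowed by $t+t=0$: when $\chr K\neq 2$ one has $t=0$ and the extra $\Z$-grading \eqref{eq:x-grading} separates the $A$-part from the $Ax$-part, simplifying the extraction; when $\chr K=2$ the element $t$ can be genuinely nonzero and the term $\lambda vt$ must be carried throughout the analysis. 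Once these points are checked, recognising $J:_B y$ as a member of $\mathcal L(B)$ is a matter of bookkeeping.
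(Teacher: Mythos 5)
Your plan follows the paper in reducing to $m=1$ and in the normal-form bookkeeping, but it diverges in one structural choice that turns out to matter: you verify condition (\ref{cond3}) of Proposition \ref{prop:equiv to uK}, which obliges you to compute $J:_B y$ for \emph{every} $y\in B_1\setminus J$, whereas the paper verifies condition (\ref{cond4}) and therefore only ever peels off a well-chosen generator --- always either an element of $A_1$ or an element of the form $x+l$ (handled by a substitution $x\mapsto x-l$), never a general $a+\lambda x$ against a general $J$. Within your route, the subcase you flag as most entangled is actually the easy one: if $J=I^e+B(a_0+x)$ and $\lambda\neq0$, then $\lambda(a_0+x)\in J$, so $J:_B y=J:_B(a-\lambda a_0)$ and you are back to a colon ideal by an element of $A_1$. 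The genuinely hard subcase is $J=I^e$ with $y=a+x$ (rescaling $\lambda$ to $1$) and $a\neq0$. Your identity turns membership into the coupled conditions $ua\in I$ and $u+v(t-a)\in I$; substituting $u=c-v(t-a)$ with $c\in I$, the solution set is $I^e$ together with all $v(x+a-t)$ such that $v(t-a)a\in I$, i.e.\ $v\in (I:_A a):_A(t-a)$. Universal Koszulity of $A$ does put this iterated colon ideal in $\mathcal L(A)$, but that is not what you need: the degree-one part of $J:_B y$ is only $I_1$ unless $(a-t)a\in I$, so whenever $(I:_A a):_A(t-a)\supsetneq I$ while $(a-t)a\notin I$, the colon ideal $J:_B y$ is \emph{not} generated in degree $1$. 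Nothing in the universal Koszulity of $A$ rules this configuration out, so the deferred step is a genuine gap, not bookkeeping.

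The obstruction is real when $t\neq0$. Take $A=\Lambda_{\F_2}(e_1,e_2,e_3)=\F_2[e_1,e_2,e_3]/(e_1^2,e_2^2,e_3^2)$, for which one checks directly that every colon ideal $(Y):_A a$ with $Y\subseteq A_1$, $a\in A_1$, lies in $\mathcal L(A)$; set $t=e_3$, $B=A(e_3\mid x)$, $J=(e_1)^e$ and $y=e_2+x$. Then $e_2(x+e_2+e_3)=e_2e_3+e_2x$ satisfies $(e_2e_3+e_2x)(e_2+x)=0\in J$, hence lies in $J:_B y$, while a direct degree-one computation gives $(J:_B y)_1=\F_2 e_1$; so $J:_B y$ strictly contains $B\cdot(J:_B y)_1=(e_1)^e$ and is not in $\mathcal L(B)$. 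Consequently your argument cannot be completed at the stated level of generality: it closes only under the additional input that $t=0$ (where the extra grading \eqref{eq:x-grading} decouples the system, as you anticipate) or that $a^2=ta$ for every $a\in A_1$ --- the identity that holds in all the Galois-cohomological applications, where $t$ is the class of $-1$, and that is also what makes the substitution $x\mapsto x-l$ in the paper's own case $2$ a well-defined algebra endomorphism. You should either impose one of these hypotheses explicitly and use $a^2=ta$ to kill the term $(a-t)a$ in the hard subcase, or switch to the paper's condition-(\ref{cond4}) strategy, which avoids computing $J:_B(a+\lambda x)$ for mixed $y$ altogether.
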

\begin{proof}
Since $A(t\mid x_1,\dots,x_m)=A(t\mid x_1)(t\mid x_2)\dots(t\mid x_m)$, by induction it is enough to prove the claim for 
\[
B=A(t\mid x)=\frac{T(V\oplus\Span_{K}\{x\})}{(R\cup\{x^2-tx,xa+ax\mid a\in V\})}.
\]
The collection $\mathcal L(B)$ obviously satisfies the first two conditions in the definition of Koszul filtration, so we shall focus on the third.
Let $I\in\mathcal L(B)\setminus\{(0)\}$.
\begin{enumerate}
\item Suppose that $I$ has a complete set of generators belonging to $A$; that is, $I=I_A\!\,^e$ for some $I_A\in\mathcal L(A)$. Then, by the universal Koszulity of $A$, there exist $J_A\in\mathcal{L}(A)\setminus\{I_A\}$ and $a\in A_1$ such that $I_A=J_A+Aa$ and $J_A:_Aa\in\mathcal L(A)$. Set $J=J_A\!\,^e$.

First we claim that $J\cap A=J_A$. Indeed, let $\{\widetilde{a_i}\}$ be a set of generators of $J_A$. The same set also generates $J$ as a left ideal of $B$. Let $u\in J\cap A$; then $u=\sum_i(u_i+v_ix)\widetilde{a_i}=\sum_iu_i\widetilde{a_i}-\left(\sum_iv\widetilde{a_i}\right)x$; since $u\in A$, it follows from the uniqueness of the normal form of elements in $B$ that $u=\sum_iu_i\widetilde{a_i}$, whence $u\in J_A$. The reverse inclusion is obvious.

Then we claim that $J\neq I=J+Ba$. The equality is a direct consequence of $I_A=J_A+Aa$. As regards the inequality, suppose by contradiction that $J=I$; then $a\in J$; but since $a\in A$, by the previous claim, $a\in J_A$; as a consequence, $I_A=J_A+Aa=J_A$, in contradiction to the construction of $J_A$.

Finally, we claim that $J:_Ba=(J_A:_Aa)^e$. In fact, if $J:_Ba$ contains an arbitrary element in normal form $u+vx\in B, u,v\in A$ (recall Remark \ref{rem:twisted ext varia}), then $ua+vxa=ua-vax\in J$. Writing $ua-vax=\sum_i(u_i+v_ix)\widetilde{a_i}$, where the $\widetilde{a_i}$'s are the generators of $J$, and factoring $x$ out in the right-hand-side, by the uniqueness of the normal form $ua=\sum_iu_i\widetilde{a_i}\in J\cap A$ and $va=\sum_iv_i\widetilde{a_i}\in J\cap A$ separately. This means that $u,v\in J_A:_Aa$, whence $u+vx\in(J_A:_Aa)^e$. For the reverse inclusion, the generators of $J_A:_Aa$ belong to $J:_Ba$ by construction. This implies that $J:_Ba\in\mathcal{L}(B)$.

\item Now suppose that $I$ does not have a set of generators all included in $A$. Using suitable $K$-linear combinations of generators, which of course preserve degrees, we can reduce to the case $I=I_A\!\,^e+(x+l)$, where $I_A\in\mathcal L(A)$ and $l\in A_1\setminus I$. We first address the case $l=0$, that is, $I=I_A\!\,^e+Bx$. Set $J=I_A\!\,^e$. Then $J\neq I=J+Bx$ and $J:_Bx={J}+(-t+x)$. In fact, if $J:_Bx$ contains an arbitrary element $u+vx\in B$ in normal form as before, then $ux+vx^2=ux+vtx\in J$. Writing $ux+vtx=\sum_i(u_i+v_ix)\widetilde{a_i}$, where the $\widetilde{a_i}$'s are the generators of $J$, by the uniqueness of the normal form $u+vt=-\sum_iv_i\widetilde{a_i}\in J$. But then $u+vx=u+vt+v(-t+x)\in J+(-t+x)$. For the reverse inclusion, the generators of $J+(-t+x)$ belong to $J:_Bx$ thanks to the defining relation $x^2-tx$ of $B$. Again it follows that $J:_Bx\in\mathcal{L}(B)$.

Now if $l$ is arbitrary, we consider the automorphism of graded algebras\footnote{The symbol $\squig$ was chosen and strongly defended by the second author, despite the doubts of the third author.} $\squig:B\to B$ defined as $\squig(u+vx)=u+v(x-l)$. This automorphism sends $J$ to $I^e+Bx$, so the general case reduces to the previous special case.
\end{enumerate}
The statement now follows from condition \ref{cond4} of Proposition \ref{prop:equiv to uK}.
\end{proof}

\section{Strong Koszulity of ET groups}
\subsection{Free pro-$p$ groups}\label{ssec:sK free}
\begin{prop}\label{prop:free sK}
Suppose that $G$ is a free pro-$p$ group on $n$ generators. Then $H=H^\bullet(G,\F_p)$ is strongly Koszul.
\end{prop}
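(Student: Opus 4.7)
The plan is to mirror the idea already used in the proof of Proposition \ref{prop:free uK} and exploit the drastic degeneracy of $H = H^\bullet(G,\F_p)$. Since $G$ is free pro-$p$, its cohomology vanishes in degrees $\ge 2$, so $H \cong \F_p \oplus V$ with $V = H^1(G,\F_p)$ an $n$-dimensional $\F_p$-vector space, and the product of any two elements of positive degree is zero. In particular, the augmentation ideal is $H_+ = V$ and a minimal homogeneous generating set of $H_+$ is simply any $\F_p$-basis $X = \{u_1, \dots, u_n\}$ of $V$.

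To verify strong Koszulity in the sense of Definition \ref{def:strongly koszul}, I would fix this generating set $X$ and take an arbitrary proper subset $Y \subsetneq X$ together with an element $x \in X \setminus Y$. The first step is to describe $(Y)$ explicitly: because $V \cdot V = 0$, the left ideal $(Y)$ coincides with the $\F_p$-span of $Y$, concentrated in degree $1$. The second step is to compute $(Y) : x$. For a homogeneous element $a \in H$, either $a$ has degree $0$ (so $a \in \F_p$ and $ax \in (Y)$ forces $a = 0$ since $x$ is linearly independent from $Y$ in $V$), or $a$ has positive degree (so $ax = 0 \in (Y)$ automatically). Hence
\[
(Y) : x \;=\; H_+ \;=\; (X),
\]
which is trivially generated by the subset $X$ of itself.

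Since this holds for every admissible choice of $Y$ and $x$, the condition of Definition \ref{def:strongly koszul} is satisfied, and $H$ is strongly Koszul. There is no real obstacle here: the only subtlety is just the observation that $H_+ \cdot H_+ = 0$ makes every colon ideal $(Y) : x$ equal to the entire augmentation ideal, which is itself generated by $X$. The argument is therefore essentially as short as the one for Proposition \ref{prop:free uK}.
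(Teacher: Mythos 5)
Your proof is correct. It rests on exactly the same observation as the paper's, namely that for a free pro-$p$ group the cohomology is concentrated in degrees $0$ and $1$, so that $H_+\cdot H_+=0$; the only difference is that the paper then concludes by citing \cite[Corollary 6.6]{eneHerzog} (the relation space is the span of all quadratic monomials, and such algebras are known to be strongly Koszul), whereas you verify Definition \ref{def:strongly koszul} directly by computing $(Y):x=H_+=(X)$ for every $Y\subsetneq X$ and $x\in X\setminus Y$. Your version is self-contained and slightly more informative, since it exhibits the colon ideals explicitly, but it is not a genuinely different route.
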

\begin{proof}
In this case, the cohomology is concentrated in degrees $0$ and $1$, that is, the space of relators of the cohomology is the span of all quadratic monomials. The result then follows from \cite[Corollary 6.6]{eneHerzog}.
\end{proof}

\subsection{Demushkin groups}\label{ssec:sK Demushkin}
Let $d$ denote the cardinality of a minimal set of generators of a Demushkin group $G$.
\begin{prop}\label{prop:c_2 sK}
Suppose that $G=C_2$. Then $H=H^\bullet(G,\F_2)$ is strongly Koszul.
\end{prop}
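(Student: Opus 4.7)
The plan is to verify Definition \ref{def:strongly koszul} directly, by analogy with the proof of Proposition \ref{prop:C_2 uK}. Since $G = C_2$ is finite, Remark \ref{rem:cohomology of Demushkin} recalls that $H = \F_2[t]$ is the polynomial ring in one variable $t$ of degree $1$. Thus the augmentation ideal $H_+$ is generated as a left ideal by the singleton $X = \{t\}$, and this is obviously a minimal system of homogeneous generators of $H_+$, so $d = 1$.

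Next I would check the colon-ideal condition required by strong Koszulity. The only proper subset of $X$ is $Y = \emptyset$, and the only element of $X \setminus Y$ is $t$ itself; so the unique instance of the condition to check is that $(Y) : t = (0) : t$ is generated by a subset of $X$. But $H = \F_2[t]$ is an integral domain, so
\[
(0) : t = \{a \in H \mid at = 0\} = (0),
\]
which is generated by the empty subset of $X$. This confirms the condition of Definition \ref{def:strongly koszul}, so $H$ is strongly Koszul.

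There is essentially no obstacle here: the proof reduces to the observation that $\F_2[t]$ has no zero-divisors, exactly as in the proof of Proposition \ref{prop:C_2 uK}, where the same fact was used to verify condition \ref{cond3} of Proposition \ref{prop:equiv to uK}. The only thing to be careful about is the vacuous/trivial case bookkeeping (a proper subset of a singleton is empty, and the zero ideal is generated by the empty subset of $X$), which is standard.
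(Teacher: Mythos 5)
Your proof is correct and follows exactly the same route as the paper, which simply notes that $H=\F_2[t]$ and that $(0):t=(0)$; you have merely spelled out the bookkeeping (the singleton generating set, the empty proper subset) that the paper leaves implicit.
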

\begin{proof}
As recalled in Remark \ref{rem:cohomology of Demushkin}, $H=\F_2[t]$, which is strongly Koszul, because $(0):t=(0)$.
\end{proof}

\begin{prop}\label{prop:demushkin sK 1}
Suppose that $G$ is a Demushkin group with invariant $q\neq2$. Then $H=H^\bullet(G,\F_p)$ is strongly Koszul.
\end{prop}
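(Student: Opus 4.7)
The plan is to exploit the very restrictive structure of the cohomology of an infinite Demushkin group: it is concentrated in degrees $0,1,2$, with $H^2(G,\F_p)=\F_p\cdot\omega$ one-dimensional, and the cup-product pairing $H^1\times H^1\to H^2$ is nondegenerate. Under the hypothesis $q\neq 2$, Remark \ref{rem:cohomology of Demushkin}(1) gives an even number $d=2k$ of generators and a symplectic basis $X=\{a_1,\dots,a_{2k}\}$ of $H^1(G,\F_p)$ for which $a_{2l-1}\cup a_{2l}=1$, $a_{2l}\cup a_{2l-1}=-1$, and all other cup products among basis elements vanish. For each index $j$, write $\bar{a}_j$ for the symplectic partner of $a_j$ (so $\bar{a}_{2l-1}=a_{2l}$ and $\bar{a}_{2l}=a_{2l-1}$). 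Then $X$ is a minimal homogeneous system of generators of $H_+$, and my aim is to verify, for every subset $Y\subsetneq X$ and every $a_j\in X\setminus Y$, that the colon ideal $(Y)\colon a_j$ is generated by a subset of $X$.

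Since $H$ is concentrated in degrees $0,1,2$, the colon ideal splits into its three homogeneous pieces, which I compute directly. The key observation is that whenever $Y$ is nonempty, any $a_{i_s}\in Y$ satisfies $\bar a_{i_s}\cdot a_{i_s}=\pm\omega$, so already $(Y)\cap H_2=H_2$. Consequently, for $b\in H_1$ the product $b\cdot a_j\in H_2$ lies automatically in $(Y)$, and for $b\in H_2$ the product vanishes in $H_3=0\subseteq(Y)$; while in degree $0$ the condition $b\cdot a_j\in(Y)\cap H_1=\mathrm{span}(Y)$ forces $b=0$ since $a_j\notin Y$. Thus for $Y\neq\emptyset$ one has $(Y)\colon a_j=H_+=(X)$, which is trivially generated by a subset of $X$.

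The only remaining case is $Y=\emptyset$. Expanding $b=\sum c_i a_i\in H_1$, the symplectic multiplication table gives $b\cdot a_j=c_{j'}\cdot(\pm\omega)$, where $j'$ is the index of $\bar a_j$; hence $b\cdot a_j=0$ precisely when $c_{j'}=0$, i.e.\ when $b\in\mathrm{span}(X\setminus\{\bar a_j\})$. Since $H_2\cdot a_j=0$ trivially, this gives the degree-wise description $(0)\colon a_j=\mathrm{span}(X\setminus\{\bar a_j\})\oplus H_2$. Finally, to identify this with the left ideal $(X\setminus\{\bar a_j\})$, I observe that for any $a_m\in X\setminus\{\bar a_j\}$, its partner $\bar a_m$ exists in $H_1$ and satisfies $\bar a_m\cdot a_m=\pm\omega$ (here one uses $k\geq 1$, so that $|X|\geq 2$ and $X\setminus\{\bar a_j\}\neq\emptyset$); therefore $\omega$ lies in $(X\setminus\{\bar a_j\})$, so its degree-$2$ part is all of $H_2$, and it matches $(0)\colon a_j$ on the nose.

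The argument is essentially finite-dimensional linear algebra, so there is no serious technical obstacle; the only delicate point is recognising that the nondegeneracy of the symplectic pairing makes the colon ideals collapse either to $H_+$ or to $(X\setminus\{\bar a_j\})$, both of which are manifestly generated by subsets of $X$. This verifies Definition \ref{def:strongly koszul} and proves that $H$ is strongly Koszul.
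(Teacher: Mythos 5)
Your proposal is correct and follows essentially the same route as the paper: the same symplectic basis from Remark \ref{rem:cohomology of Demushkin}(1), the same split into the cases $Y=\emptyset$ (where $(0)\colon a_j$ is generated by $X$ minus the symplectic partner of $a_j$) and $Y\neq\emptyset$ (where the colon ideal is all of $(X)$). Your degree-by-degree verification that the stated ideals really do coincide with the colon ideals is just a more explicit write-up of what the paper leaves implicit.
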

\begin{proof}
In this case, there exists a symplectic basis $X=\{a_1,\dots,a_{2k}\}$ of $H_1=H^1(G,\F_p)$ as in case $1$ of Remark \ref{rem:cohomology of Demushkin}.
Now there are only two cases to consider in order to verify the condition of Definition \ref{def:strongly koszul}.
\begin{enumerate}
\item If the subset $Y$ is empty, then the only colon ideal involved is 
\[
(0)\colon a_i=\begin{cases}
              (\{a_j\mid j\neq i-1\}) & i\mbox{ even}\\
              (\{a_j\mid j\neq i+1\}) & i\mbox{ odd}.
              \end{cases}
\]
\item If $Y$ is non-empty, then the colon ideals of the form
$(a_{i_1},\dots,a_{i_{j-1}}):a_{i_j}$ for $j>1$ are involved. But each of these ideals is just the whole $(X)$.
\end{enumerate}
\end{proof}
\begin{prop}
Suppose that $G$ is a Demushkin pro-$2$ group on $d=2k+1$ generators, $k\in\N\setminus\{0\}$. Then $H=H^\bullet(G,\F_2)$ is strongly Koszul.
\end{prop}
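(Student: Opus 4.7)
The plan is to verify Definition \ref{def:strongly koszul} directly, taking $X=\{a_1,\dots,a_{2k+1}\}$ to be the basis of $H_1=H^1(G,\F_2)$ described in Remark \ref{rem:cohomology of Demushkin}(2). This is automatically a minimal system of homogeneous generators of $H_+$. Recall that $H$ is concentrated in degrees $0,1,2$ (as $G$ is a finitely generated Demushkin pro-$p$ group), with $H_2\cong\F_2$ spanned by a single class $\xi$. The nondegeneracy of the cup product guarantees that every $a_i\in X$ has a unique \emph{partner} $p(a_i)\in X$ such that $p(a_i)\cdot a_i=\xi$, and all other products of generators vanish. Explicitly, $p(a_1)=a_1$, $p(a_{2i})=a_{2i+1}$, $p(a_{2i+1})=a_{2i}$ for $i=1,\dots,k$.

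For each pair $(Y,x)$ with $Y\subsetneq X$ and $x=a_i\in X\setminus Y$, I would compute $(Y):x$ by splitting into two cases.

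\textbf{Case 1} ($Y=\emptyset$): I would claim that $(0):a_i=(X\setminus\{p(a_i)\})$. The degree-$1$ part equals $\Span(X\setminus\{p(a_i)\})$ because, for $b\in H_1$, the product $b\cdot a_i$ is nonzero precisely when $b$ is a nonzero multiple of $p(a_i)$. The degree-$2$ part of $(0):a_i$ is all of $H_2=\F_2\xi$ since $H_3=0$, and $\xi$ is expressible as a product of elements of $X\setminus\{p(a_i)\}$: if $i=1$ then $\xi=a_2\cdot a_3$, while if $i\geq 2$ then $a_1\in X\setminus\{p(a_i)\}$ and $\xi=a_1\cdot a_1$. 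The hypothesis $k\geq1$ (so $d\geq3$) is exactly what ensures this last step.

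\textbf{Case 2} ($Y\neq\emptyset$): I would claim that $(Y):x=H_+=(X)$. Picking any $a_j\in Y$, the identity $p(a_j)\cdot a_j=\xi$ shows $\xi\in(a_j)\subseteq(Y)$. Hence $H_2\subseteq(Y)$. For any $h\in H_+$, the product $hx$ lies in $H_{\geq 2}$, so it is a scalar multiple of $\xi$, hence in $(Y)$; this yields $H_+\subseteq(Y):x$. The reverse inclusion follows from minimality: since $x\in X\setminus Y$ and $(Y)\cap H_1=\Span Y$ by degree count, one has $x\notin(Y)$, so $1\notin(Y):x$. Thus $(Y):x=H_+$, which is generated by the full set $X$.

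The main obstacle is essentially bookkeeping rather than anything conceptual: one must identify the partner correctly and confirm that the Case 1 conclusion uses the $d\geq 3$ hypothesis to produce $\xi$ inside $(X\setminus\{p(a_i)\})$. Once the two cases above are checked, the strong Koszulity criterion in Definition \ref{def:strongly koszul} holds for the chosen $X$, completing the proof.
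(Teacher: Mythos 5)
Your proof is correct and follows essentially the same route as the paper: working with the explicit basis of Remark \ref{rem:cohomology of Demushkin}(2), computing $(0)\colon a_i=(X\setminus\{p(a_i)\})$ when $Y=\emptyset$ (your partner function reproduces exactly the paper's case distinction $i=1$, $i$ even, $i>1$ odd), and showing $(Y)\colon x=(X)$ when $Y\neq\emptyset$. You merely supply more detail than the paper, in particular the verification that $\xi$ actually lies in $(X\setminus\{p(a_i)\})$, which is where $d\geq 3$ enters; the only cosmetic caveat is that the existence of a unique partner with all other products $a_j\cdot a_i$ vanishing is a feature of the chosen basis rather than of nondegeneracy alone, but since you use that explicit basis this is harmless.
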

\begin{proof}
There is a basis $X=\{a_1,\dots,a_{2k+1}\}$ of $H_1=H^1(G,\F_2)$ as in case $2$ of Remark \ref{rem:cohomology of Demushkin}.
\begin{enumerate}
\item If the subset $Y$ is empty, then the only colon ideal involved is 
\[
(0)\colon a_i=\begin{cases}
              (\{a_j\mid j\neq 1\}) & i=1\\
              (\{a_j\mid j\neq i+1\}) & i\mbox{ even}\\
              (\{a_j\mid j\neq i-1\}) & i>1\mbox{ odd}.
              \end{cases}
\]
\item If $Y$ is non-empty, then our argument is exactly as in Proposition \ref{prop:demushkin sK 1}.
\end{enumerate}
\end{proof}
\begin{prop}
Suppose that $G$ is a Demushkin pro-$2$ group on $d=2k$ generators, $k\in\N\setminus\{0\}$, with invariant $q=2$. Then $H=H^\bullet(G,\F_2)$ is strongly Koszul.
\end{prop}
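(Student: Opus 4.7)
The plan is to adapt the argument from the three preceding propositions, with an additional preliminary change of basis in $H^1(G,\F_2)$. Starting from the basis $\{a_1,\dots,a_{2k}\}$ given by case $3$ of Remark \ref{rem:cohomology of Demushkin}, I would set
\[
b_1 = a_1, \qquad b_2 = a_1 + a_2, \qquad b_j = a_j \text{ for } j \geq 3.
\]
A direct computation shows that, in the basis $X = \{b_1, \dots, b_{2k}\}$, the nonzero cup products of pairs of basis elements are
\[
b_1 \cup b_1 = b_2 \cup b_2 = 1, \qquad b_{2\ell-1} \cup b_{2\ell} = b_{2\ell} \cup b_{2\ell-1} = 1 \quad \text{for } \ell = 2, \dots, k,
\]
with every other cup product among basis vectors equal to $0$. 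This is the analogue of the near-symplectic arrangement used in Proposition \ref{prop:demushkin sK 1}, except that the diagonal squares $b_1 \cup b_1$ and $b_2 \cup b_2$ are now nonzero.

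I would then verify the condition of Definition \ref{def:strongly koszul} with $X$ as the minimal system of generators. Since $H^\bullet(G,\F_2)$ is concentrated in degrees $0,1,2$ with $\dim_{\F_2} H^2 = 1$, for $Y = \varnothing$ the table above directly yields
\[
(0):b_i = \begin{cases}
(\{b_j \mid j \neq 1\}) & i = 1, \\
(\{b_j \mid j \neq 2\}) & i = 2, \\
(\{b_j \mid j \neq i+1\}) & i > 1 \text{ odd}, \\
(\{b_j \mid j \neq i-1\}) & i > 2 \text{ even}.
\end{cases}
\]
In every case the listed subset of $X$ contains $b_1$ or $b_2$, so the ideal it generates already picks up $b_1 \cup b_1$ or $b_2 \cup b_2$ in degree two, which equals all of $H^2$; combined with $H^3 = 0$, this matches the degree-two part of $(0):b_i$. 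For $Y \neq \varnothing$, the argument is identical to Proposition \ref{prop:demushkin sK 1}: nondegeneracy of the cup product forces $(Y)_2 = H^2$, so $(Y):x = H_+ = (X)$ for every $x \in X \setminus Y$.

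The only real subtlety, and the reason for the preliminary change of basis, is the case $k = 1$. In the Remark basis $\{a_1, a_2\}$ one finds $(0):a_1 \cap H^1 = \F_2(a_1 + a_2)$, which is not a coordinate subspace, so $(0):a_1$ fails to be generated by any subset of $\{a_1, a_2\}$. The substitution $b_2 = a_1 + a_2$ exactly brings the annihilator of $b_1 = a_1$ into the coordinate span of $b_2$, curing this pathology uniformly for all $k \geq 1$ without disturbing the symplectic pairs inherited from Remark \ref{rem:cohomology of Demushkin}.
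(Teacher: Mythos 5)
Your proof is correct and follows essentially the same route as the paper: the identical change of basis $b_2=a_1+a_2$ (over $\F_2$ this is the paper's $b_2=a_2+a_1$), the same table of colon ideals $(0):b_i$, and the same reduction of the non-empty-$Y$ case to nondegeneracy of the cup product as in the $q\neq 2$ case. One small imprecision in your closing remark: the original basis fails for \emph{every} $k$, not only $k=1$, since $c\cup a_1=c_1+c_2$ makes $(0):a_1\cap H^1$ a non-coordinate subspace in all cases; but your change of basis is applied uniformly for all $k$, so nothing in the argument is affected.
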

\begin{proof}
There is a basis $X=\{a_1,\dots,a_{2k}\}$ of $H_1=H^1(G,\F_2)$ as in case $3$ of Remark \ref{rem:cohomology of Demushkin}.
With the change of basis
\[
b_1=a_1,\quad b_2=a_2+a_1, \quad b_i=a_i\; (i=3,\dots,2k),
\]
the cup product is given by
\[
\begin{array}{l}
b_1\cup b_1=b_2\cup b_2=b_3\cup b_4=\dots=b_{2k-1}\cup b_{2k}=1,\\
b_4\cup b_3=\dots=b_{2k}\cup b_{2k-1}=1,\\
b_i\cup b_j=0 \mbox{ for all other }i,j.
\end{array}
\]
This provides the following description of the relevant colon ideals.
\begin{enumerate}
\item If the subset $Y$ is empty, then the only colon ideal involved is 
\[
(0)\colon b_i=\begin{cases}
              (\{b_j\mid j\neq 1\}) & i=1\\
              (\{b_j\mid j\neq 2\}) & j=2\\
              (\{b_j\mid j\neq i+1\}) & i>1\mbox{ odd}\\
              (\{b_j\mid j\neq i-1\}) & i>2\mbox{ even}.
              \end{cases}
\]
\item If $Y$ is non-empty, then our argument is exactly as in Proposition \ref{prop:demushkin sK 1}.
\end{enumerate}
\end{proof}
\subsection{Direct sum}
\begin{prop}\label{prop:direct sum strongly Koszul}
Let $A$ and $B$ be strongly Koszul algebras over an arbitrary field $K$ with respect to the minimal systems of homogeneous generators $X_A=\{a_1,\dots,a_c\}$ and $X_B=\{b_1,\dots,b_d\}$, respectively. Then the direct sum $A\sqcap B$ is strongly Koszul with respect to the minimal system of generators $\{a_1,\dots,a_c,b_1,\dots,b_d\}$.
\end{prop}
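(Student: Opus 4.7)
The key structural fact about $C = A \sqcap B$ is that its defining cross-relations force $A_+ \cdot B_+ = B_+ \cdot A_+ = 0$, so $A$ and $B$ embed as subalgebras of $C$ sharing only the ground field $K$, and as a vector space $C = K \oplus A_+ \oplus B_+$. My plan is to split the colon ideal $(Y)_C : x$ according to this decomposition and reduce to the strong Koszulity of each factor, in complete parallel with the argument for universal Koszulity in Proposition \ref{prop:direct sum universally Koszul}.

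Given $Y \subsetneq X := X_A \cup X_B$ and $x \in X \setminus Y$, write $Y_A = Y \cap X_A$, $Y_B = Y \cap X_B$, and assume by symmetry that $x \in X_A$ (so $Y_A \subsetneq X_A$). The first step is to verify that
\[
(Y)_C = (Y_A)_A + (Y_B)_B,
\]
where each summand is viewed inside $C$; this is a direct sum because $A_+ \cap B_+ = 0$ in $C$. The inclusion $\supseteq$ is immediate, and the reverse uses that for any $y \in Y_A$ and any $c = \lambda + \alpha + \beta \in C$ (with $\lambda \in K$, $\alpha \in A_+$, $\beta \in B_+$) one has $cy = (\lambda + \alpha) y \in (Y_A)_A$; symmetrically for $y \in Y_B$.

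Next I would compute $(Y)_C : x$. Writing $c$ as above, $\beta x = 0$, so $cx = (\lambda + \alpha)x \in A$. If $cx$ lies in $(Y_A)_A \oplus (Y_B)_B$, then, since the $(Y_B)_B$-component lies in $B_+$ while $cx$ sits in $A$, that component must vanish and $cx \in (Y_A)_A$. Thus the condition $c \in (Y)_C : x$ is equivalent to $\lambda + \alpha \in (Y_A)_A :_A x$, with $\beta \in B_+$ unconstrained, yielding
\[
(Y)_C : x = \bigl((Y_A)_A :_A x\bigr) + B_+.
\]

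Strong Koszulity of $A$ with respect to $X_A$ now supplies a subset $S \subseteq X_A$ generating $(Y_A)_A :_A x$ as a left ideal of $A$; the hypothesis applies because $Y_A \subsetneq X_A$ (including the boundary case $Y_A = \emptyset$, where one invokes the annihilator of $x$ in $A$). Using the same annihilation identities from the first step, the left ideal of $C$ generated by $S \cup X_B \subseteq X$ equals $A \cdot S + B \cdot X_B = \bigl((Y_A)_A :_A x\bigr) + B_+$, matching the colon ideal just computed. The main bookkeeping obstacle is justifying the direct-sum decomposition of $(Y)_C$ inside $C$ and then carefully pinning down which components of $c$ are constrained by the requirement $cx \in (Y)_C$; once these are handled, the result follows by transporting the universally-Koszul argument into the coordinate-bound setting of strong Koszulity.
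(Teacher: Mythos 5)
Your proof is correct and follows essentially the same route as the paper's: both rest on the unique decomposition $c=\lambda+\alpha+\beta$ of an element of $C=A\sqcap B$ and the mutual annihilation of $A_+$ and $B_+$ to reduce each colon ideal in $C$ to a colon ideal in one factor plus the full augmentation ideal of the other factor. The only (cosmetic) difference is that you merge the paper's two case distinctions into the single formula $(Y)_C:x=\bigl((Y_A)_A:_Ax\bigr)+B_+$.
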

\begin{proof}
The two key ideas are that every element $x\in A\sqcap B$ has a unique decomposition $x=x_{A} + x_{B}$, with $x_{A}\in A$ and $x_{B}\in B$, and that the elements of $A$ annihilate $x_{B}$, and conversely.
\begin{enumerate}
\item A left ideal of $A\sqcap B$ of the shape $I=(a_{i_1},\dots,a_{i_{k-1}}):a_{i_{k}}$ coincides with $(I^c_A)^e+(B_+)^e$ by the definition of the direct sum. By hypothesis, $I^c_A$ is generated, as a left ideal of $A$, by a subset $Y_A$ of $X_A$, so $I=(Y_A\cup X_B)$. An analogous argument works for a left ideal of $A\sqcap B$ of the shape $(b_{i_1},\dots,b_{i_{k-1}}):b_{i_{k}}$.
\item A left ideal of $A\sqcap B$ of the shape $I=(a_{i_1},\dots,a_{i_{h}},b_{j_1},\dots,b_{j_{k-1}}):b_{j_{k}}$ coincides with $
(A_+)^e+(((b_{j_1},\dots,b_{j_{k-1}}):b_{j_{k}})^c_B)^e$ by the definition of the direct sum. By hypothesis, $((b_{j_1},\dots,b_{j_{k-1}}):b_{j_{k}})^c_B$ is generated, as a left ideal of $B$, by a subset $Y_B$ of $X_B$, so $I=(X_A\cup Y_B)$. An analogous argument works for a left ideal of $A\sqcap B$ of the shape $I=(a_{i_1},\dots,a_{i_{h-1}},b_{j_1},\dots,b_{j_{k}}):a_{i_{h}}$.
\end{enumerate}
\end{proof}

\subsection{Twisted extension}
\begin{prop}\label{prop:twisted extension strongly Koszul}
Let $A=Q(V,R)$ be a strongly Koszul $K$-algebra over an arbitrary field $K$ with respect to the minimal system of homogeneous generators $X_A=\{a_1,\dots,a_d\}$ and let $X_J=\{x_1,\dots,x_m\}$. Then the twisted extension $B:=A(0\mid X_J)=A\otimes^{-1}\Lambda(x_1,\dots,x_m)$ is strongly Koszul with respect to the minimal system of homogeneous generators $\{a_1,\dots,a_d,x_1,\dots,x_m\}$.
\end{prop}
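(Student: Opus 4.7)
My plan is to reduce the statement to the one-variable case by induction and then exploit the normal form of a twisted extension over a single generator.

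First I would observe that $A(0\mid x_1,\dots,x_m)=A(0\mid x_1,\dots,x_{m-1})(0\mid x_m)$, with the generating set behaving compatibly under this iteration, so it suffices to prove the inductive step: if $A$ is strongly Koszul with respect to a minimal homogeneous generating set $X_A=\{a_1,\dots,a_d\}$, then $B:=A(0\mid x)$ is strongly Koszul with respect to $X:=X_A\cup\{x\}$.

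Next, I would pin down the ideals of $B$ generated by subsets of $X$, using the unique normal form $b=u+vx$ (with $u,v\in A$) from Remark \ref{rem:twisted ext varia} and the defining relations $x^2=0$ and $xa=-ax$ for $a\in A_1$. A short computation should give, for every $Y_A\subseteq X_A$ with $I_A$ the left ideal of $A$ it generates,
\[
(Y_A)_B \;=\; I_A\oplus I_A x, \qquad (Y_A\cup\{x\})_B \;=\; I_A\oplus Ax,
\]
both understood as subsets of $B$ written in normal form.

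Then I would verify the colon-ideal condition of Definition \ref{def:strongly koszul} by splitting on whether the distinguished element $z\in X\setminus Y$ lies in $X_A$ or equals $x$. For $z=a_k\in X_A$, writing $b=u+vx$ yields $ba_k=ua_k-va_kx$; thus $b\in(Y):a_k$ should translate into the conditions $ua_k\in I_A$ always, plus $va_k\in I_A$ in the subcase $x\notin Y$. Invoking strong Koszulity of $A$ to produce a subset $Z_A\subseteq X_A$ with $I_A:_A a_k=(Z_A)_A$, the colon ideal then reads $(Y):a_k=\bigl(Z_A\cup(Y\cap\{x\})\bigr)_B$. For $z=x$ we must have $Y=Y_A$, and since $(u+vx)x=ux$, membership of $bx$ in $(Y_A)$ forces $u\in I_A$ with $v$ unconstrained, giving $(Y_A):x=I_A\oplus Ax=(Y_A\cup\{x\})_B$. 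In every case the colon ideal is generated by a subset of $X$.

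The main obstacle I anticipate is purely bookkeeping: keeping careful track of the anti-commutation sign when passing between normal-form coefficients and the colon condition, and distinguishing cleanly between $I_A$ as a left ideal of $A$ and its extension to $B$. Once that bookkeeping is nailed down, strong Koszulity of $A$ takes care of every remaining step, and the inductive passage from $m-1$ to $m$ concludes the proof.
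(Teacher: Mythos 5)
Your proposal is correct and follows essentially the same route as the paper: reduce to $B=A(0\mid x)$ by iterating the twisted extension, use the normal form $u+vx$ to identify ideals generated by subsets of $X_A\cup\{x\}$, and compute the colon ideals case by case (the paper records exactly your three formulas, justifying the membership criterion via the auxiliary $\Z$-grading in which $A$ and $Ax$ are the homogeneous components). Your case analysis and the resulting generating sets match the paper's, so only the bookkeeping you flagged remains, and it goes through as you describe.
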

\begin{proof}
Since $A(0\mid X_J)=A(0\mid x_1)(0\mid x_2)\dots(0\mid x_m)$, by induction it is enough to prove the statement for the algebra
\[
B=A(0\mid x)=\frac{T(\Span_{K}(X_A\cup \{x\})}{(R\cup\{x^2,xa_i+a_ix\mid a_i\in X_A\})}.
\]
The only relation between $x$ and any element of $A_+$ is skew-commutativity. As a consequence:
\begin{enumerate}
\item If, in $A$, $(a_{i_1},\dots,a_{i_{k}}):a_{i_{k+1}}=(a_{j_1},\dots,a_{j_{r}})$, then in $B$
\[\begin{array}{l}
(a_{i_1},\dots,a_{i_{k}}):a_{i_{k+1}}=(a_{j_1},\dots,a_{j_{r}}),\\
(a_{i_1},\dots,a_{i_{k}}, x):a_{i_{k+1}}=(a_{j_1},\dots,a_{j_{r}},x).
\end{array}
\]
\item $(a_{i_1},\dots,a_{i_{k}}):x=(a_{i_1},\dots,a_{i_{k}},x)$.
\end{enumerate}
In fact any element $b\in B$ can be expressed as a sum $u+vx$ with $u,v\in A$, and the summands $u$ and $vx$ are exactly the homogeneous components of $b$ with respect to the additional grading \eqref{eq:x-grading}.
Since the left-hand-side colon ideals are homogeneous with respect to the additional grading \eqref{eq:x-grading}, an element $b$ belongs to one of these ideals, if and only if both $u$ and $vx$ belong to that ideal. 
\end{proof}

\section{Exceptions to Strong Koszulity}\label{sec:baddies}
Strong Koszulity may not be preserved by twisted extensions in characteristic $2$, when $t\neq 0$. In this section we present two notable families of examples.
\begin{defn}[\cite{lam2005}]
The \emph{level} (\emph{Stufe}) of a field $F$, denoted $s(F)$, is the smallest positive integer $k$ such that $-1$ can be written as a sum of $k$ squares in $F$, provided such integer exists, and $0$ otherwise.
\end{defn}
\begin{rem}
If $\chr F=2$, then $s(F)=1$, since $-1$ is a square.
If $\chr F\neq2$, then $s(F)=0$ if and only if the field is formally real. Otherwise, $s(F)$ is a power of $2$ (see \cite{lam2005}).

The level of a field $F$ is related to the value set of some quadratic forms over it. For $a\in F^\times$, $[a]$ denotes the image of $a$ in $F^\times/(F^\times)^2$. The \emph{value set} of a quadratic form $\phi$ over $F$ is defined as $D_F\phi=\{[a]\in F^\times/(F^\times)^2\mid a\mbox{ is represented by }\phi\}$.
\end{rem}

\begin{defn}
A field $F$ is \emph{$2$-rigid} if, for all $a\in F$ such that $[a]\neq 1$ and $[a]\neq[-1]$, the value set of the quadratic form $\gen{1,a}=X^2+aY^2$ is included in $\{[1],[a]\}\subseteq F^\times/(F^\times)^2$.
\end{defn}

By \cite[Theorem 1.9, Proposition 1.1]{ware}, the level of a $2$-rigid field $F$ is either $0$, $1$ or $2$. The case $s(F)=1$ corresponds to the condition that $F$ contains a square root of $-1$. A $2$-rigid field is \emph{superpythagorean} if it is of level $0$ 

\subsection{Superpythagorean fields}\label{ssec:superpythagorean}
Let $F$ be a superpythagorean field such that $\dim_{\F_2}F^\times/(F^\times)^2=d<\infty$ and let $\{[-1],[a_2],\dots,[a_d]\}$ be a fixed $\F_2$-basis of $F^\times/(F^\times)^2$. Then a presentation of $H=H^\bullet(G_F(2),\F_2)$ is
\begin{equation}\label{eq:cohom superpyth}
H=\F_2\gen{t,\alpha_2,\dots,\alpha_n\mid \alpha_j\alpha_i=\alpha_i\alpha_j, \alpha_it=t\alpha_i,\alpha_i\alpha_i=t\alpha_i},
\end{equation}
where $t=\ell([-1])$, $\alpha_i=\ell([a_i])$ and cup products are omitted (see \cite{wadsworth:cohomology}).
For all $n\geq 1$, the set
\[
B_n=\{t^{n-r}\alpha_{i_1}\dots\alpha_{i_r}\mid 0\leq r\leq d,\quad i_1<\dots <i_r\}
\]
forms a $\F_2$-basis of $H^n(G_F(2),\F_2)$ (see \cite[Theorem 5.13(2)]{elmanLam} and \cite[Corollary 7.5]{voe}).

\begin{prop}\label{prop:superpythagorean PBW}
If $F$ is a superpythagorean field with $\dim_{\F_2}F^\times/(F^\times)^2<\infty$, then $H=H^\bullet(G_F(2),\F_2)$ is PBW.
\end{prop}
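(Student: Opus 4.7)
The plan is to apply Bergman's Diamond Lemma. First I would choose the ordered basis $\{t,\alpha_2,\dots,\alpha_n\}$ of $H_1$, with $t<\alpha_2<\dots<\alpha_n$, and extend this order to the degree-lexicographic admissible order on the monomials of the tensor algebra. With this choice, the leading monomials of the three families of defining relations of \eqref{eq:cohom superpyth} are, respectively, $\alpha_j\alpha_i$ for $j>i$, $\alpha_it$, and $\alpha_i^2$; the corresponding reduction rules are $\alpha_j\alpha_i\to\alpha_i\alpha_j$, $\alpha_it\to t\alpha_i$, and $\alpha_i^2\to t\alpha_i$. The reduced monomials are therefore exactly those of the form $t^{k}\alpha_{i_1}\cdots\alpha_{i_r}$ with $i_1<\dots<i_r$, that is, the elements of the known $\F_2$-basis $\bigcup_n B_n$ of $H$.

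Next I would enumerate all critical degree-$3$ monomials $xyz$, namely those for which both $xy$ and $yz$ are leading. Since no leading monomial begins with $t$, the middle letter $y$ must be one of the $\alpha_i$'s. The critical monomials then split according to whether the initial pair is $\alpha_j\alpha_i$ (with $j>i$) or $\alpha_i^2$, and according to whether the terminal pair is $\alpha_i\alpha_k$ (with $k<i$), $\alpha_it$, or $\alpha_i^2$. This produces exactly six critical monomials: $\alpha_j\alpha_i t$ and $\alpha_j\alpha_i^2$ for $j>i$, $\alpha_j\alpha_i\alpha_k$ for $j>i>k$, as well as $\alpha_i^2 t$, $\alpha_i^3$, and $\alpha_i^2\alpha_k$ for $k<i$.

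The remaining step is to verify that each of these six reduction graphs is confluent, i.e.\ that the two initial reduction choices both lead to the same linear combination of reduced monomials. For example, both paths out of $\alpha_i^3$ terminate at $t^2\alpha_i$, and both paths out of $\alpha_j\alpha_i t$ terminate at $t\alpha_i\alpha_j$. Once this routine check is completed, Bergman's Diamond Lemma yields the linear independence of all reduced monomials, so this set is a PBW basis of $H$ and in particular $H$ is PBW. The main concern is that the characteristic-$2$ relation $\alpha_i^2=t\alpha_i$ is genuinely nontrivial whenever $[-1]\neq 1$ in $F^\times/(F^\times)^2$, and it creates the three extra critical monomials $\alpha_i^2 t$, $\alpha_i^3$, $\alpha_i^2\alpha_k$ that do not appear in polynomial or exterior analogues. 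Confluence in these cases hinges on the fact that the right-hand side $t\alpha_i$ of this relation still commutes with all other generators, so every reduction sequence can be brought to the common normal form $t^{k+1}\alpha_i$ or $t\alpha_k\alpha_i$ by subsequent applications of the commutativity rules.
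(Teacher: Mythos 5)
Your proposal is correct and follows essentially the same route as the paper: the same degree-lexicographic order on $t<\alpha_2<\dots<\alpha_d$, the same three families of reduction rules, the same six families of critical degree-$3$ monomials, and a confluence check via Bergman's Diamond Lemma (the paper's Rewriting Method). The only difference is presentational: the paper draws all six reduction graphs explicitly, whereas you verify two representative ones and correctly observe that the remaining cases reduce to the same normal forms by commutativity.
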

\begin{proof}
We keep the previous notation and we apply the Rewriting Method of \cite[Section 4.1]{lodval}.
Consider the degree-lexicographic order on the monomials of $T\gen{t,\alpha_2,\dots,\alpha_d}$ induced by choosing the total order $t<\alpha_2<\dots<\alpha_d$. Then a normalized basis for the space of relations of $H$ is
\[
\{\alpha_j\alpha_i-\alpha_i\alpha_j\mid2\leq i<j\leq d\}\cup\{\alpha_it-t\alpha_i\mid2\leq i\leq d\}\cup\{\alpha_i\alpha_i-t\alpha_i\mid2\leq i\leq d\}.
\]
The corresponding critical monomials are
\begin{enumerate}
\item $\alpha_i\alpha_i\alpha_i$ $(2\leq i\leq d)$;
\item $\alpha_j\alpha_j\alpha_i$ $(2\leq i<j\leq d)$;
\item $\alpha_j\alpha_i\alpha_i$ $(2\leq i<j\leq d)$;
\item $\alpha_i\alpha_it$ $(2\leq i\leq d)$;
\item $\alpha_k\alpha_j\alpha_i$ $(2\leq i<j<k\leq d)$;
\item $\alpha_j\alpha_it$ $(2\leq i<j\leq d)$.
\end{enumerate}
The graphs of the critical monomials are
\begin{multicols}{2}
\begin{description}
\item[Type (1)] $\xymatrix@R-12pt@C-32pt{&\alpha_i\alpha_i\alpha_i\ar[ddl]\ar[dr]&\\
&&\alpha_it\alpha_i\ar[dll]\\
t\alpha_i\alpha_i\ar[d]&&\\
tt\alpha_i}$
\item[Type (2)] $\xymatrix@R-12pt@C-32pt{&\alpha_j\alpha_j\alpha_i\ar[dl]\ar[dr]&\\
t\alpha_j\alpha_i\ar[dddr]&&\alpha_j\alpha_i\alpha_j\ar[d]\\
&&\alpha_i\alpha_j\alpha_j\ar[d]\\
&&\alpha_it\alpha_j\ar[dl]\\
&t\alpha_i\alpha_j&}$
\end{description}
\end{multicols}
\begin{multicols}{2}
\begin{description}
\item[Type (3)] $\xymatrix@R-12pt@C-32pt{&\alpha_j\alpha_i\alpha_i\ar[dl]\ar[dr]&\\
\alpha_i\alpha_j\alpha_i\ar[d]&&\alpha_jt\alpha_i\ar[d]\\
\alpha_i\alpha_i\alpha_j\ar[dr]&&t\alpha_j\alpha_i\ar[dl]\\
&t\alpha_i\alpha_j&}$
\item[Type (4)] $\xymatrix@R-12pt@C-32pt{&\alpha_i\alpha_it\ar[dl]\ar[dr]&\\
t\alpha_it\ar[ddr]&&\alpha_it\alpha_i\ar[d]\\
&&t\alpha_i\alpha_i\ar[dl]\\
&tt\alpha_i&}$
\end{description}
\end{multicols}
\begin{multicols}{2}
\begin{description}
\item[Type (5)] $\xymatrix@R-12pt@C-32pt{&\alpha_k\alpha_j\alpha_i\ar[dl]\ar[dr]&\\
\alpha_j\alpha_k\alpha_i\ar[d]&&\alpha_k\alpha_i\alpha_j\ar[d]\\
\alpha_j\alpha_i\alpha_k\ar[dr]&&\alpha_i\alpha_k\alpha_j\ar[dl]\\
&\alpha_i\alpha_j\alpha_k&}$
\item[Type (6)] $\xymatrix@R-12pt@C-32pt{&\alpha_j\alpha_it\ar[dl]\ar[dr]&\\
\alpha_i\alpha_jt\ar[d]&&\alpha_jt\alpha_i\ar[d]\\
\alpha_it\alpha_j\ar[dr]&&t\alpha_j\alpha_i\ar[dl]\\
&t\alpha_i\alpha_j.&}$
\end{description}
\end{multicols}
Since all critical monomials are confluent, $H$ is PBW.
\end{proof}
\begin{prop}\label{prop:superpythagorean filtration}
If $F$ is a superpythagorean field with $\dim_{\F_2}F^\times/(F^\times)^2<\infty$, then $H=H^\bullet(G_F(2),\F_2)$ is universally Koszul.
\end{prop}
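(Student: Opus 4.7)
The plan is to recognise the cohomology algebra $H=H^\bullet(G_F(2),\F_2)$ displayed in \eqref{eq:cohom superpyth} as a twisted extension of an algebra that we already know to be universally Koszul, and then to invoke Proposition \ref{prop:twisted extension universally Koszul} (Theorem C).

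More precisely, I would set $A=\F_2[t]=H^\bullet(C_2,\F_2)$, which is universally Koszul by Proposition \ref{prop:C_2 uK}, and consider the twisted extension $A(t\mid\alpha_2,\dots,\alpha_d)$ in the sense of Definition \ref{def:operations on algebras}. Its defining relations, as prescribed by the definition, are
\[
\alpha_i\alpha_j+\alpha_j\alpha_i,\qquad \alpha_it+t\alpha_i,\qquad \alpha_i^2-t\alpha_i,
\]
for all admissible indices $i,j$. In characteristic $2$ the first two families reduce exactly to the commutation relations $\alpha_i\alpha_j=\alpha_j\alpha_i$ and $\alpha_it=t\alpha_i$, and the third family gives $\alpha_i^2=t\alpha_i$. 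Comparing with the presentation \eqref{eq:cohom superpyth}, we see that the map of graded algebras $A(t\mid\alpha_2,\dots,\alpha_d)\to H$ which is the identity on generators is well defined and bijective (bijectivity can be seen either by a Hilbert-series count against the monomial basis $B_n$, or directly from the normal form of elements of a twisted extension recalled in Remark \ref{rem:twisted ext varia}).

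Having established this isomorphism, the conclusion is immediate: Proposition \ref{prop:twisted extension universally Koszul} states that a twisted extension of a universally Koszul algebra over an arbitrary field is itself universally Koszul, so $H\cong A(t\mid\alpha_2,\dots,\alpha_d)$ inherits universal Koszulity from $A=\F_2[t]$.

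The only mild subtlety, and the step I would check with care, is the identification itself, since the defining relations of a twisted extension are written with the characteristic-free sign conventions $\alpha_i\alpha_j+\alpha_j\alpha_i$ and $\alpha_it+t\alpha_i$ rather than the plain commutators that appear in \eqref{eq:cohom superpyth}; in characteristic $2$ these are of course equivalent, but this is what makes the whole argument work (and also what makes Proposition D, contrasting the behaviour with respect to strong Koszulity, genuinely a characteristic-$2$ phenomenon). No real obstacle arises beyond this bookkeeping.
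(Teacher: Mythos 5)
Your proposal is correct and is essentially identical to the paper's own proof, which likewise writes $H=\F_2[t](t\mid\alpha_2,\dots,\alpha_d)$ as a twisted extension of the universally Koszul algebra $\F_2[t]$ and applies Proposition \ref{prop:twisted extension universally Koszul}. The extra care you take in matching the characteristic-$2$ sign conventions and verifying the identification via the normal form is a reasonable elaboration of a step the paper treats as immediate.
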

\begin{proof}
Since $H=\F_2[t](t\mid\alpha_2,\dots,\alpha_d)$ is a twisted extension of the universally Koszul algebra $\F_2[t]$, the statement follows from Proposition \ref{prop:twisted extension universally Koszul}.
\end{proof}
\begin{prop}\label{prop:superpythagorean not strongly Koszul}
If $F$ is a superpythagorean field with $3\leq\dim_{\F_2}F^\times/(F^\times)^2<\infty$, then $H=H^\bullet(G_F(2),\F_2)$ is not strongly Koszul.
\end{prop}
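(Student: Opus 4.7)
The plan is to rule out every minimal homogeneous generating set of $H=H^\bullet(G_F(2),\F_2)$ (equivalently, every $\F_2$-basis of $H_1$) from witnessing strong Koszulity. The argument rests on a single structural identity in $H$: for every $u\in H_1$ one has $u^2=tu$. This follows directly from the defining relations $\alpha_j^2=t\alpha_j$ of \eqref{eq:cohom superpyth}: since $H$ is commutative and we work in characteristic $2$, writing $u=\lambda t+\sum_j\mu_j\alpha_j$ gives $u^2=\lambda^2 t^2+\sum_j\mu_j^2\alpha_j^2=\lambda t^2+\sum_j\mu_j t\alpha_j=tu$. In particular $(t+u)\cdot u=tu+u^2=0$, so $t+u\in(0):u$ for every $u\in H_1$.

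The first technical step is to upgrade this into an explicit computation of the degree-$1$ annihilator: for every $u\in H_1\setminus\F_2 t$,
\[
(0):u\cap H_1=\F_2\cdot(t+u),
\]
while $(0):t\cap H_1=0$. One inclusion is given by the previous paragraph; for the reverse, I would expand a putative annihilator $v=\lambda t+\sum_k\mu_k\alpha_k$ against $u$ in the explicit $\F_2$-basis of $H_2$ recorded in the text, and read off linear constraints on $\lambda$ and the $\mu_k$. This is the only place calculation enters the argument, and I do not anticipate serious obstacles there.

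Now assume that some basis $X=\{u_1,\dots,u_d\}$ of $H_1$ witnesses strong Koszulity. Then for each $i$ the ideal $(0):u_i$ coincides with $(Z_i)$ for some $Z_i\subseteq X$; comparing degree-$1$ parts gives $\operatorname{span}_{\F_2}(Z_i)=((0):u_i)_1$. When $u_i\neq t$ this span is the $1$-dimensional line $\F_2(t+u_i)$, and since $Z_i\subseteq X$ is linearly independent $Z_i$ must be the singleton $\{u_k\}$ with $u_k=t+u_i$. Thus the basis is forced to satisfy
\[
t+u_i\in X\quad\text{for every }u_i\in X\setminus\{t\}.
\]

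The final step is a short combinatorial argument exploiting $d\geq 3$. If $t=u_1\in X$, the rule $u_i\mapsto t+u_i$ must define a fixed-point-free involution on $\{u_2,\dots,u_d\}$; if no such involution exists we are already done, and otherwise picking any pair $(u_i,u_{j(i)})$ produces the relation $u_1+u_i+u_{j(i)}=0$ among three distinct basis vectors. If instead $t\notin X$, the same rule defines a fixed-point-free involution on all of $X$, which forces $d$ to be even; since $d\geq 3$ this gives $d\geq 4$, so two disjoint pairs are available, and $u_{i_1}+u_{j(i_1)}=t=u_{i_2}+u_{j(i_2)}$ yields a linear relation among four distinct basis vectors. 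Either way $X$ is linearly dependent, contradicting its being a basis, so no such $X$ can exist and $H$ is not strongly Koszul.
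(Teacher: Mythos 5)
Your argument is correct and is essentially the paper's own proof: the paper likewise computes $(0):\ell([a])=\gen{\ell([-a])}=\gen{t+\ell([a])}$, deduces that strong Koszulity would force $t+\ell([a])$ into the generating set $X$ for every non-distinguished generator, and derives the linear dependence $\ell([a])+\ell([-a])+\ell([b])+\ell([-b])=\ell([-1])+\ell([-1])=0$ among four distinct elements of $X$, contradicting minimality. The one computation you defer --- that $\bigl((0):u\bigr)\cap H_1$ is exactly the line $\F_2(t+u)$ for $u\in H_1\setminus\F_2 t$ --- does go through by expanding $vu$ against the basis $\{t^2\}\cup\{t\alpha_k\}\cup\{\alpha_j\alpha_k\}_{j<k}$ of $H^2$, so there is no gap, and your case split on whether $t\in X$ is only a cosmetic variation on the paper's choice of two suitable basis elements.
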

\begin{proof}
We first note that, by the shape of the defining relations of $H$, for any $i=1,\dots,d$,
\begin{equation}\label{eq:0 colon a}
(0)\colon\alpha_i=(t+\alpha_i).
\end{equation}
Since the system of generators $\{t,\alpha_1,\dots,\alpha_d\}$ of $H$ is arbitrary, a similar equality holds in complete generality.
Explicitly, for all $a\in F^\times$ such that $[a]\neq[1]$ and $[a]\neq[-1]$, we have that 
\[(0)\colon \ell([a])=\gen{\ell([-a])}.\] 

Since $\dim_{\F_2}F^\times/(F^\times)^2\geq3$, any basis of $H^1(G_F(2),\F_2)$ contains at least two elements $\ell([a]), \ell([b])$ such that the elements $\ell([-1]), \ell([a]), \ell([b]), \ell([-a]), \ell([-b])$ are all distinct. Now suppose that $H$ be strongly Koszul with respect to a minimal system of homogeneous generators $X=\{u_1,\dots,u_n\}\supseteq\{\ell([a]),\ell([b])\}$. Applying the previous equality to $(0)\colon\ell([a])$ and $(0)\colon\ell([b])$ forces
\[
X\supseteq\{\ell([a]), \ell([b]),\ell([-a]),\ell([-b])\}.
\]
But then the system $X$ is not minimal, as it satisfies the relation
\[
\ell([a])+\ell([-a])+\ell([b])+\ell([-b])=\ell([-1])+\ell([-1])=0 \in H^1(G_F(2),\F_2),
\]
a contradiction.
\end{proof}
\subsection{$2$-Rigid fields of level $2$}\label{ssec:level2}
Let $F$ be a $2$-rigid field of level $2$ such that $\dim_{\F_2}F^\times/(F^\times)^2=d<\infty$ and let $\{[-1],[a_2],\dots,[a_d]\}$ be a fixed $\F_2$-basis of $F^\times/(F^\times)^2$. Then a presentation of $H=H^\bullet(G_F(2),\F_2)$ is
\begin{equation}\label{eq:cohom level2}
H=\F_2\gen{t,\alpha_2,\dots,\alpha_d\mid \alpha_j\alpha_i=\alpha_i\alpha_j, \alpha_it=t\alpha_i,\alpha_i\alpha_i=t\alpha_i, tt=0},
\end{equation}
where $t=\ell([-1])$, $\alpha_i=\ell([a_i])$ and cup products are omitted (see \cite{wadsworth:cohomology}).
This graded algebra is concentrated in degrees $0$ to $d$.
For all $n\leq d$, the set
\[
B_n=\{t^{\delta}\alpha_{i_1}\dots\alpha_{i_n-\delta}\mid \delta=0,1,\quad i_1<\dots <i_n-\delta\}
\]
forms a $\F_2$-basis of $H^n(G_F(2),\F_2)$.

We use the previous notation throughout this subsection, unless otherwise specified.

\begin{prop}\label{prop:level2 PBW}
If $F$ is a $2$-rigid field of level $2$ with $\dim_{\F_2}F^\times/(F^\times)^2<\infty$, then $H=H^\bullet(G_F(2),\F_2)$ is PBW.
\end{prop}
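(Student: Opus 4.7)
The plan is to apply the Rewriting Method of \cite[Section 4.1]{lodval} exactly as in Proposition \ref{prop:superpythagorean PBW}, the only novelty being the extra defining relation $tt=0$. With the same total order $t<\alpha_2<\dots<\alpha_d$ on the basis, extended degree-lexicographically to $\mathrm{Mon}(T\gen{t,\alpha_2,\dots,\alpha_d})$, a normalized basis of the space of relations is
\[
\{\alpha_j\alpha_i-\alpha_i\alpha_j\mid 2\leq i<j\leq d\}\cup\{\alpha_it-t\alpha_i\mid 2\leq i\leq d\}\cup\{\alpha_i\alpha_i-t\alpha_i\mid 2\leq i\leq d\}\cup\{tt\},
\]
so the only change compared with the superpythagorean case is the appearance of the additional leading monomial $tt$.

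I would next determine the critical monomials of degree $3$, namely monomials $xyz$ such that both $xy$ and $yz$ are leading monomials. The six types (1)--(6) of Proposition \ref{prop:superpythagorean PBW} involve only leading monomials that are unchanged here, hence remain critical. Since $tt$ starts and ends in $t$ and the only leading monomials ending in $t$ are $\alpha_it$ and $tt$ itself, inspection of all pairs of leading monomials shows that exactly two further critical monomials appear:
\[
\text{(7)}\ \alpha_itt\quad (2\leq i\leq d), \qquad \text{(8)}\ ttt.
\]

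Finally, I would verify confluence of every critical monomial. For type (7), one reduction path gives $\alpha_itt\to t\alpha_it\to tt\alpha_i\to 0$, while the other gives $\alpha_itt\to\alpha_i\cdot 0=0$; both terminate at $0$. Type (8) is immediate since reducing either copy of $tt$ yields $0$. For types (1)--(6), the graphs drawn in the proof of Proposition \ref{prop:superpythagorean PBW} are still valid; in types (1) and (4) the common terminal vertex $tt\alpha_i$ further reduces to $0$ by the new relation, but this only simplifies the final value without introducing any branching. Since every critical monomial is confluent, Bergman's Diamond Lemma delivers the PBW property for $H$. The only real point requiring care is ensuring that no overlap ambiguity involving the new leading monomial $tt$ has been overlooked, which the enumeration above confirms is not the case.
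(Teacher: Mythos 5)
Your proposal is correct and follows essentially the same route as the paper's proof: the same degree-lexicographic order, the same normalized basis with the extra relator $tt$, the same two additional critical monomials $\alpha_itt$ and $ttt$ (which the paper labels (3') and (1')), and the same observation that the type (1) and (4) graphs merely gain a final reduction $tt\alpha_i\to 0$ without any new branching. Your check that no other overlap ambiguities arise from $tt$ matches the paper's (implicit) enumeration, so nothing is missing.
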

\begin{proof}
Consider the degree-lexicographic order on the monomials of $T\gen{t,\alpha_2,\dots,\alpha_d}$ induced by the total order $t<\alpha_2<\dots<\alpha_d$. Then a normalized basis for the space of relations of $H$ in the sense of \cite[Section 4.1]{lodval} is
\[
\{\alpha_j\alpha_i-\alpha_i\alpha_j,\quad \alpha_it-t\alpha_i,\quad \alpha_i\alpha_i-t\alpha_i \mid2\leq i<j\leq d\}\cup\{tt\}.
\]
The six types of monomials introduced in the proof of Proposition \ref{prop:superpythagorean PBW} are again critical. In addition, there are two new types:
\begin{enumerate}
\item[(1')] $ttt$;
\item[(3')] $\alpha_itt$ $(2\leq i\leq d)$.
\end{enumerate}
The graphs of critical monomials of types (2), (3), (5) and (6) are exactly the same as in Proposition \ref{prop:superpythagorean PBW}. The graphs of types (1) and (4) shall be modified as follows:
\begin{multicols}{2}
\begin{description}
\item[Type (1)] $\xymatrix@R-12pt@C-32pt{&\alpha_i\alpha_i\alpha_i\ar[ddl]\ar[dr]&\\
&&\alpha_it\alpha_i\ar[dll]\\
t\alpha_i\alpha_i\ar[d]&&\\
tt\alpha_i\ar[d]&&\\
0}$
\item[Type (4)] $\xymatrix@R-12pt@C-32pt{&\alpha_i\alpha_it\ar[dl]\ar[dr]&\\
t\alpha_it\ar[ddr]&&\alpha_it\alpha_i\ar[d]\\
&&t\alpha_i\alpha_i\ar[dl]\\
&tt\alpha_i\ar[d]&\\
&0.&}$
\end{description}
\end{multicols}
Finally, the graphs of the new critical monomials are
\begin{multicols}{2}
\begin{description}
\item[Type (1')] $\xymatrix@R-12pt@C-32pt{&ttt\ar@/_1pc/[d]\ar@/^1pc/[d]&\\
&0&}$
\item[Type (3')] $\xymatrix@R-12pt@C-32pt{&\alpha_itt\ar[dl]\ar[dddr]&\\
t\alpha_it\ar[d]\\
tt\alpha_i\ar[drr]\\
&&0.}$
\end{description}
\end{multicols}
Since all critical monomials are confluent, $H$ is PBW.
\end{proof}

\begin{prop}\label{prop:level2 filtration}
If $F$ is a $2$-rigid field of level $2$ with $\dim_{\F_2}F^\times/(F^\times)^2<\infty$, then $H=H^\bullet(G_F(2),\F_2)$ is universally Koszul.
\end{prop}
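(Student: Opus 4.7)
The plan is to identify $H$ as a twisted extension of a small, tractable base algebra and then invoke Proposition \ref{prop:twisted extension universally Koszul}. Reading off the presentation \eqref{eq:cohom level2}, one sees that the relations split cleanly: the single relation $t^2 = 0$ concerns $t$ alone, while every other relation has precisely the shape of the defining relations of a twisted extension over $t$ in characteristic $2$ (where $t+t=0$ is automatic). Thus the first step is to observe
\[
H \;=\; A(t\mid \alpha_2,\dots,\alpha_d), \qquad \text{where } A \;=\; \F_2\gen{t\mid t^2}.
\]
This exactly parallels the superpythagorean argument in Proposition \ref{prop:superpythagorean filtration}, the only difference being that the base $\F_2[t]$ is replaced by its quotient $A$ reflecting $s(F)=2$.

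The remaining work is to verify that $A$ itself is universally Koszul, after which Proposition \ref{prop:twisted extension universally Koszul} closes the argument. I would do this using condition \ref{cond3} of Proposition \ref{prop:equiv to uK}. Since $A_1 = \F_2 t$ is one-dimensional, the set $\mathcal{L}(A)$ of left ideals generated in degree $1$ consists of only two ideals: $(0)$ and $A_+ = (t)$. One has to check that for each $J \in \mathcal{L}(A)$ and each $x \in A_1 \setminus J$, the colon ideal $J:x$ lies in $\mathcal{L}(A)$. The case $J = A_+$ is vacuous. The only remaining case is $J = (0)$ with $x = t$, where
\[
(0):t \;=\; \{a \in A \mid at = 0\} \;=\; \F_2 t \;=\; A_+ \in \mathcal{L}(A),
\]
using that $A$ is concentrated in degrees $0$ and $1$ with $t\cdot t = 0$.

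Combining the two steps yields universal Koszulity of $H$. Because both ingredients — the identification as a twisted extension and the two-case verification — are immediate, there is essentially no obstacle here; the only conceptual point worth emphasising is that the transition from level $0$ (superpythagorean) to level $2$ amounts precisely to replacing the Koszul base $\F_2[t]$ by the universally Koszul base $\F_2\gen{t\mid t^2}$, and the twisted-extension machinery of Section \ref{sec:uK} absorbs this modification without any change.
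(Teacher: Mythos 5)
Your proposal is correct and follows essentially the same route as the paper: the paper's proof is precisely the one-line observation that $H=\F_2\gen{t\mid t^2}(t\mid\alpha_2,\dots,\alpha_d)$ is a twisted extension of the universally Koszul algebra $\F_2\gen{t\mid t^2}$, so Proposition \ref{prop:twisted extension universally Koszul} applies. Your explicit verification that the base $\F_2\gen{t\mid t^2}$ is universally Koszul via condition \ref{cond3} of Proposition \ref{prop:equiv to uK} is a correct (and welcome) elaboration of a step the paper leaves implicit.
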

\begin{proof}
Since $H=\F_2\gen{t\mid t^2}(t\mid\alpha_2,\dots,\alpha_d)$ is a twisted extension of the universally Koszul algebra $\F_2\gen{t\mid t^2}$, the statement follows from Proposition \ref{prop:twisted extension universally Koszul}.
\end{proof}

\begin{prop}\label{prop:level2 not strongly Koszul}
If $F$ is a $2$-rigid field of level $2$ with $3\leq\dim_{\F_2}F^\times/(F^\times)^2<\infty$, then $H=H^\bullet(G_F(2),\F_2)$ is not strongly Koszul.
\end{prop}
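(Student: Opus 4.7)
The plan is to adapt the proof of Proposition \ref{prop:superpythagorean not strongly Koszul} to the level $2$ setting, where the only substantive change is the recomputation of the colon ideal $(0):\alpha_i$ in the presence of the additional defining relation $t^2=0$.

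The first and main task would be to verify that, in the level-$2$ algebra \eqref{eq:cohom level2}, one still has $(0):\alpha_i = H\cdot(t+\alpha_i)$. One containment is immediate since $(t+\alpha_i)\alpha_i = t\alpha_i+\alpha_i^2 = 2t\alpha_i = 0$. For the reverse containment, I would exploit the description of $H$ as a twisted extension $\F_2\gen{t\mid t^2}(t\mid\alpha_2,\dots,\alpha_d)$ already used in Proposition \ref{prop:level2 filtration}: iterating the normal form of Remark \ref{rem:twisted ext varia} gives that every $b\in H$ can be written uniquely as $b=b_0+b_1\alpha_i$ with $b_0,b_1$ in the subalgebra $H'$ generated by $t$ and $\{\alpha_j:j\neq i\}$. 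A direct inspection of the basis $B_n$ shows that right multiplication by $\alpha_i$ is injective on $H'$. Hence $b\alpha_i = (b_0+b_1t)\alpha_i = 0$ forces $b_0 = b_1t$ in $H'$, so $b = b_1(t+\alpha_i)$. Since the presentation does not depend on the choice of basis of $H^1$, this gives the intrinsic identity
\[
(0):\ell([a]) = H\cdot\ell([-a])\quad\text{for every }[a]\in F^\times/(F^\times)^2\setminus\{[1],[-1]\}.
\]

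Once the colon computation is in hand, the remainder of the argument runs exactly as in the superpythagorean case. Because $s(F)=2$ we have $[-1]\neq[1]$, and because $\dim_{\F_2}F^\times/(F^\times)^2\geq 3$, any minimal system of homogeneous generators $X$ of $H_+$ (which is automatically an $\F_2$-basis of $H^1$) can be shown to contain two elements $\ell([a]),\ell([b])$ for which the five classes $\ell([-1]),\ell([\pm a]),\ell([\pm b])$ are pairwise distinct. Assuming $H$ strongly Koszul with respect to $X$, the requirement that $(0):\ell([a])$ and $(0):\ell([b])$ each be generated by a subset of $X$, together with these colon ideals being principal with a single degree-$1$ generator and $X$ being $\F_2$-linearly independent, forces $\ell([-a]),\ell([-b])\in X$. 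The relation
\[
\ell([a])+\ell([-a])+\ell([b])+\ell([-b]) = 2\ell([-1]) = 0
\]
then contradicts the linear independence of $X$.

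The only place where the proof could conceivably fail compared to the level-$0$ case is in the colon ideal computation, since the nilpotency $t^2=0$ might in principle enlarge $(0):\alpha_i$. However, because $t$ occurs symmetrically with $\alpha_i$ in the defining quadratic relations and the complementary subalgebra $H'$ remains $\alpha_i$-torsion-free, no additional elements arise, and the argument concludes as above.
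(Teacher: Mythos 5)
Your proposal is correct and follows the same route as the paper, which simply observes that the identity $(0):\ell([a])=H\cdot\ell([-a])$ from the superpythagorean case still holds in level $2$ and then repeats the minimality contradiction verbatim. Your explicit verification of that colon-ideal identity via the normal form and the injectivity of right multiplication by $\alpha_i$ on the complementary subalgebra is a sound (and slightly more detailed) justification of the step the paper leaves as an observation.
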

\begin{proof}
Identical to that of Proposition \ref{prop:superpythagorean not strongly Koszul}, observing that Equation \eqref{eq:0 colon a} holds in this situation as well.
\end{proof}

\section{Koszul filtrations and ET operations}
It is natural to investigate the compatibility of Koszul filtrations with direct sums and twisted extensions. In view of the sharper results in Section \ref{sec:uK}, the content of this section is not needed for our applications to Galois groups, but it seems to be interesting in its own right.

In this section algebras are defined over an arbitrary field.
\begin{prop}\label{prop:direct sum Koszul filtration}
Let $A$ and $B$ be algebras with respective Koszul filtrations $\mathcal F$ and $\mathcal G$. Then the direct sum $C=A\sqcap B$ has the Koszul filtration $\mathcal H=\mathcal F\sqcap\mathcal G =\{I_A\!\,^e+ I_B\!\,^e\mid I_A\in\mathcal F,I_B\in\mathcal G\}$.
\end{prop}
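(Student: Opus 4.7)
The plan is to verify directly the three conditions in the definition of Koszul filtration for $\mathcal{H}$, exploiting the structural fact that in $C=A\sqcap B$ the augmentation ideals of the summands annihilate each other, so $A_+B_+=B_+A_+=0$ and $C=K\oplus A_+\oplus B_+$ as $K$-vector spaces. A first useful observation is that for any left ideal $I_A$ of $A$ generated by elements of $A_+$, its extension $I_A^e$ to $C$ coincides with $I_A$ as a subset of $C$: indeed, for $c=k+u+v\in C$ (with $k\in K$, $u\in A_+$, $v\in B_+$) and $x\in I_A$, one has $cx=(k+u)x\in I_A$ since $vx=0$. Similarly $I_B^e=I_B$ inside $C$, and $Ca=Aa$ for $a\in A_1$ (respectively $Cb=Bb$ for $b\in B_1$).

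Conditions (1) and (2) of Definition \ref{def:koszul filtration} are then immediate: each $I_A^e+I_B^e$ is generated by the degree $1$ generators of $I_A$ and $I_B$, which sit in $C_1=A_1\oplus B_1$; the zero ideal arises as $0^e+0^e$; and $C_+=A_+^e+B_+^e\in\mathcal{H}$ since $A_+\in\mathcal F$ and $B_+\in\mathcal G$.

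For condition (3), I would take a nonzero $I=I_A^e+I_B^e\in\mathcal H$, say with $I_A\neq 0$ (the symmetric case $I_A=0$, $I_B\neq 0$ is analogous). Applying the Koszul filtration property of $\mathcal F$, I would obtain $J_A\in\mathcal F$, $J_A\neq I_A$, and $a\in A_1$ with $I_A=J_A+Aa$ and $J_A:_A a\in\mathcal F$. Set $J:=J_A^e+I_B^e\in\mathcal H$. Using the identifications above, $I=J+Ca$ and $J\neq I$. The key computation is then $J:_Ca$: for $c=k+u+v\in C$, one has $ca=ka+ua$ (since $va=0$), which lives in $A$. Because $J\cap A=J_A$ (as $I_B\subseteq B_+$ sits in a complementary summand), the condition $ca\in J$ becomes $ka+ua\in J_A$. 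Since $J_A$ is homogeneous and $ka\in A_1$, $ua\in A_{\geq 2}$, this splits into $ka\in J_A$ and $ua\in J_A$. As $a\notin J_A$, the first forces $k=0$, and the second means $u\in J_A:_A a$. Hence
\[
J:_C a=(J_A:_A a)+B_+=(J_A:_A a)^e+B_+^e\in\mathcal H.
\]
The symmetric case $I_A=0$, $I_B\neq 0$ proceeds identically with the roles of $A$ and $B$ swapped, and yields $J:_C b=A_+^e+(J_B:_B b)^e\in\mathcal H$.

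The only genuinely delicate point, which I would flag as the main step to get right, is the vector-space decomposition $C=K\oplus A_+\oplus B_+$ and the homogeneity argument that separates the degree $1$ and degree $\geq 2$ parts of $ca$; everything else reduces to bookkeeping with extensions and contractions of ideals. No subtle choices enter, since the case split on which of $I_A,I_B$ is nonzero is symmetric and both branches deliver a colon ideal of the required form.
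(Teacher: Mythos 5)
Your proof is correct and follows essentially the same route as the paper's: both exploit that $A_+$ and $B_+$ annihilate each other so that extended ideals coincide with the original ones as subsets of $C$, peel off a degree-one generator from one summand using its Koszul filtration, and show that the resulting colon ideal is the extension of the colon ideal computed in that summand plus the full augmentation ideal of the other summand. The only cosmetic difference is the case split (you branch on whether $I_A=0$, the paper on whether $I_B=0$) and your explicit degree argument forcing $k=0$, which is subsumed by the definition of $J_A:_Aa$ anyway.
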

\begin{proof}
Conditions (1) and (2) of Definition \ref{def:koszul filtration} for the family $\mathcal H$ come directly from the corresponding conditions on $\mathcal F$ and $\mathcal G$.

For Condition (3), we consider two cases.
\begin{enumerate}
\item $I=I_A\!\,^e$, with $I_A\in \mathcal F\setminus\{(0)\}$.\\
Since $A_+$ and $B_+$ annihilate each other, $I=I_A\in\mathcal F$. By hypothesis, there exist $J_A\in\mathcal F\setminus\{I_A\}$ and $a\in A_1$ such that $I_A=J_A+Aa$ and $J_A:_Aa\in\mathcal F$. But, defining $J=J_A\!\,^e\in\mathcal H\setminus\{I\}$, we immediately have $J=J_A$ and $J\neq I=J\!\,^e+Ca$ as left ideals of $C$, and we claim that $J:_Ca=(J_A:_Aa)^e+B_+$. In fact, the inclusion of the right-hand-side into the left-hand-side is clear from the definition of direct sum. Conversely, a typical element of $C$ has the shape $x+y$ for some $x\in A, y\in B$, and without loss of generality we may assume $y\in B_+$. 
If $x+y\in J:_Ca$, then $xa+ya=xa\in J=J_A$, hence $x\in(J_A:_Aa)^e$, so $x+y\in(J:_Aa)^e+B_+$.
\item $I=I_A\!\,^e+I_B\!\,^e$, with $I_A\in \mathcal F$ and $I_B\in\mathcal G\setminus\{(0)\}$.\\
As before, $I_A\!\,^e=I_A$ and $I_B\!\,^e=I_B$. By hypothesis, there exist $J_B\in\mathcal G\setminus\{I_B\}$ and $b\in B_1$ such that $I_B=J_B+Bb$ and $J_B:_Bb\in\mathcal G$. Set $J=I_A\!\,^e+J_B\!\,^e$. Then $J\neq I=J+Cb$. The claim that $J:_Cb=A_++(J_B:_Bb)^e$ can be proved in an analogous way as before.
\end{enumerate}
\end{proof}

\begin{prop}\label{prop:twisted extension Koszul filtration}
Let $A$ be a quadratic algebra with a Koszul filtration $\mathcal F$. Suppose that $t\in A_1$ satisfies $t+t=0$ and that $\mathcal F$ has the property\footnote{The tag \heart\ represents the authors' love for this property.}
\[\label{eq:good filtration}
\tag{\heart} J\in\mathcal F \Rightarrow J+At\in\mathcal{F}.
\]
Then any twisted extension $C=A(t\mid x_1,\dots,x_n)$ has the Koszul filtration 
\[
\mathcal H=\{I^e+(Y)\mid I\in\mathcal F, Y\subseteq\{x_1,\dots,x_m, t-x_1,\dots,t-x_m\}\}.
\]
\end{prop}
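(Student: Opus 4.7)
The plan is to proceed by induction on $m$, exploiting the iterated structure $A(t \mid x_1,\ldots,x_m) = A(t \mid x_1)(t \mid x_2)\cdots(t \mid x_m)$ in order to reduce to a base case in which only a single new generator is adjoined.

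First I would dispatch conditions (1) and (2) of Definition \ref{def:koszul filtration} for $\mathcal H$: generation in $C_1$ is inherited from $\mathcal F$ together with the fact that every $x_i$ and every $t-x_i$ sits in $C_1$; the zero ideal arises from $I=(0)$ and $Y=\emptyset$; and the augmentation ideal $C_+$ equals $A_+^e + (x_1,\ldots,x_m)$, which lies in $\mathcal H$ since $A_+\in\mathcal F$.

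For the base case $m=1$, write $C=A(t\mid x)$ and work with the unique normal form $u+vx$, $u,v\in A$, of Remark \ref{rem:twisted ext varia}. Given a nonzero $I=I_A^e + (Y)\in\mathcal H$, I would split condition (3) into four sub-cases according to $Y\subseteq\{x,t-x\}$. For $Y=\emptyset$ (and hence $I_A\neq 0$), apply the Koszul filtration property of $\mathcal F$ to $I_A$ to obtain $J_A\in\mathcal F$ and $a\in A_1$, then check via the normal form that $J=J_A^e$ satisfies $J:_C a=(J_A:_A a)^e$. For $Y=\{x\}$, take $J=I_A^e$ and $a=x$; the relation $x^2=tx$ turns $(u+vx)x\in J$ into the single condition $u+vt\in I_A$, which is equivalent to $u+vx\in I_A^e+(t-x)$. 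For $Y=\{t-x\}$, take $J=I_A^e$ and $a=t-x$; the computation $(u+vx)(t-x)=ut-ux-2vtx$ collapses to $ut-ux$ thanks to $2t=0$, so membership in $J$ reduces to $u\in I_A$ and yields $J:_C(t-x)=I_A^e+(x)$. Finally, for $Y=\{x,t-x\}$ notice $(x,t-x)=(x,t)=(At)^e+(x)$, so $I=(I_A+At)^e+(x)$; property \eqref{eq:good filtration} places $I_A+At$ in $\mathcal F$, reducing us to the $Y=\{x\}$ case with $I_A$ replaced by $I_A+At$.

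For the inductive step, set $A'=A(t\mid x_1,\ldots,x_{m-1})$ and let $\mathcal H_{m-1}$ be the Koszul filtration on $A'$ provided by the inductive hypothesis. The crucial observation is that $\mathcal H_{m-1}$ itself satisfies \eqref{eq:good filtration} inside $A'$: for $J'=I^e+(Y)\in\mathcal H_{m-1}$, the left $A'$-ideal $A't$ agrees with the extension $(At)^e$ (because $t$ still lies in the subalgebra $A$), so $J'+A't=(I+At)^e+(Y)\in\mathcal H_{m-1}$ thanks to \eqref{eq:good filtration} in $\mathcal F$. Applying the base case to $A'$ and composing extensions then produces precisely the family $\mathcal H$. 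The main obstacle is the $Y=\{t-x\}$ sub-case, where without the hypothesis $t+t=0$ the spurious term $-2vtx$ would prevent $J:_C(t-x)$ from landing in $\mathcal H$; meanwhile, property \eqref{eq:good filtration} pulls double duty, both absorbing the ``full'' $Y$ in the base case and propagating itself through the induction.
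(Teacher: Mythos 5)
Your proposal follows essentially the same route as the paper's proof: the same case analysis according to $Y\subseteq\{x,t-x\}$, the same choices of $J$ and of the degree-one element in each case, the same normal-form computations, the same use of \eqref{eq:good filtration} to reduce the case $Y=\{x,t-x\}$ to $Y=\{x\}$, and the same observation that the resulting filtration on $A(t\mid x)$ again satisfies \eqref{eq:good filtration}, so that induction on $m$ applies. The only cosmetic difference is that you work out the $Y=\{t-x\}$ case explicitly (correctly using $t+t=0$ to kill the cross term $-2vtx$), where the paper invokes symmetry with the $Y=\{x\}$ case.
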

\begin{proof}
We first prove the result for the case $C=A(t\mid x), \mathcal{H}=\{I^e+(Y)\mid I\in\mathcal F, Y\subseteq\{x,t-x\}\}$.
Conditions (1) and (2) of Definition \ref{def:koszul filtration} are clearly satisfied. As regards Condition (3), any $c\in C$ can be written in normal form as $c=u+vx$ for some $u,v\in A$ (Remark \ref{rem:twisted ext varia}). We now address several cases separately.
\begin{enumerate}
\item $I=I_A\!\,^e$ for $I_A\in\mathcal{F}\setminus\{(0)\}$.\\
By hypothesis, there are $J_A=({a_1},\dots,{a_r})_A\in\mathcal{F}\setminus\{I_A\}$ and $a\in A_1$ such that $I_A=J_A+Aa$ and $J_A:_Aa\in\mathcal F$. We then set $J=J_A\!\,^e\id C$, so that $J\neq I=J+Ca$, and we claim that $J:_Ca=(J_A:_Aa)^e\in\mathcal H$.
In fact, all the generators of $J_A:_Aa$ belong to $J:_Ca$ by construction. For the reverse inclusion, take any $c=u+vx\in J:_Ca$. Then $ca=ua-vax$ belongs to $J$, that is, $ua-vax=\sum_{i=1}^r(u_i+v_ix){a_i}$ for suitable $u_i,v_i\in A$. By uniqueness of the normal form,
\[
\left\{\begin{array}{l}
ua=\sum_{i=1}^ru_i{a_i}\in J_A,\\
va=\sum_{i=1}^rv_i{a_i}\in J_A.
\end{array}
\right.
\]
But then $u,v\in J_A:_Aa\id A$, hence $u+vx\in (J_A:_Aa)^e\id C$.
\item $I=I_A\!\,^e+Cx$ for $I_A=(a_1,\dots,a_d)_A\in\mathcal{F}$, $d\geq0$ and all $a_i\in A_1$.\\
We set $J=I_A\!\,^e=(a_1,\dots,a_d)_C$, so that $J\neq I=J+Cx$, and we claim that $J:_Cx=I_A\!\,^e+(t-x)_C\in\mathcal H$.
In fact, all the generators of the right-hand-side ideal belong to $J:_Cx$ by construction, using the defining relations of $C$. For the reverse inclusion, take any $c=u+vx\in J:_Cx$. Then $cx=ux+vx^2=ux+vtx=(u+vt)x$ belongs to $J$, that is, $(u+vt)x=\sum_{i=1}^d(u_i+v_ix){a_i}$ for suitable $u_i,v_i\in A$. Since $u+vt\in A$, by uniqueness of the normal form, $u+vt=-\sum_{i=1}^dv_ia_i\in I_A\subseteq I_A\!\,^e$.
Finally, $u+vx=u+vt-v(t-x)\in I_A\!\,^e+(t-x)_C$.
\item $I=I_A\!\,^e+(t-x)_C$ for $I_A\in\mathcal{F}$.\\
This case is completely analogous to the previous one, interchanging the roles of $x$ and $t-x$.
\item $I=I_A\!\,^e+(x,t-x)_C$ for $I_A\in\mathcal{F}$.\\
We can write $I=I_A\!\,^e+Ct+Cx$. Thanks to property \eqref{eq:good filtration}, $I_A+At\in\mathcal F$, so this case is brought back to case 2.
\end{enumerate}
The proof of the particular case $C=A(t\mid x)$ is complete.

Note that $\mathcal{H}$ inherits Property \eqref{eq:good filtration} from $\mathcal{F}$. Therefore, since \[A(t\mid x_1,\dots,x_m)=A(t\mid x_1)(t\mid x_2)\dots(t\mid x_m),\] the general result follows by induction.
\end{proof}
\begin{cor}
Let $A$ be a quadratic algebra with a Koszul filtration $\mathcal F$. Then any twisted extension $C=A(0\mid x_1,\dots,x_m)$ has the Koszul filtration $\mathcal H=\{I^e+(Y)\mid I\in\mathcal F, Y\subseteq\{x_1,\dots,x_m\}\}$.
\end{cor}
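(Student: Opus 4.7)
The plan is to deduce this as an immediate specialisation of Proposition \ref{prop:twisted extension Koszul filtration} to the case $t=0$. I must verify two things: that the hypotheses of the proposition are met, and that the conclusion it produces simplifies to the statement of the corollary.

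First, I would note that the constant $t=0$ trivially satisfies $t+t=0$, so the twisted extension $A(0\mid x_1,\dots,x_m)$ is well-defined. Next, I would observe that the filtration $\mathcal F$ of $A$ automatically satisfies property \eqref{eq:good filtration} in this setting: since $t=0$, we have $At=(0)$, and hence for every $J\in\mathcal F$ we trivially get $J+At=J\in\mathcal F$. So Proposition \ref{prop:twisted extension Koszul filtration} applies and yields that
\[
\mathcal H'=\{I^e+(Y)\mid I\in\mathcal F,\ Y\subseteq\{x_1,\dots,x_m,\,t-x_1,\dots,t-x_m\}\}
\]
is a Koszul filtration of $C$.

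The final step is to verify $\mathcal H'=\mathcal H$. Since $t=0$ in $C$, we have $t-x_i=-x_i$, so the two-sided ideal $(t-x_i)_C$ equals the ideal $(x_i)_C$ generated by $x_i$. Hence every set $Y\subseteq\{x_1,\dots,x_m,\,t-x_1,\dots,t-x_m\}$ generates the same left ideal as its image $Y'\subseteq\{x_1,\dots,x_m\}$ obtained by replacing each $t-x_i$ with $x_i$. Thus $\mathcal H'=\mathcal H$, completing the proof.

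I do not anticipate any real obstacle here: the whole argument is a verification that the only nontrivial hypothesis of the parent proposition, namely property \eqref{eq:good filtration}, degenerates when $t=0$, and a harmless cleanup of redundant generators in the description of $\mathcal H$.
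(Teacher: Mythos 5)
Your proposal is correct and matches the paper's own proof, which simply observes that property \eqref{eq:good filtration} is automatic when $t=0$; your additional check that the generating sets $\{x_1,\dots,x_m,\,t-x_1,\dots,t-x_m\}$ and $\{x_1,\dots,x_m\}$ yield the same family of ideals is a detail the paper leaves implicit but is handled correctly.
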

\begin{proof}
If $t=0$, \eqref{eq:good filtration} is automatically satisfied.
\end{proof}

\section{Unconditional results}\label{sec:unconditional}
\subsection{Abstract Witt rings}\label{ssec:witt}
Let $F$ be a field of $\chr F\neq2$. This is of course equivalent to $-1$ being a primitive second root of $1$ in $F$, as prescribed by the Standing Hypothesis \ref{ass}.
In his celebrated paper \cite{milnor}, J.~Milnor observed the existence of a deep connection between three central arithmetic objects: the associated graded ring of the Witt ring $W(F)$ of quadratic forms over $F$, the Galois cohomology $H^\bullet(G_F,\F_2)$ of the absolute Galois group of $F$, and the reduced Milnor K-theory $K_\bullet(F)/2K_\bullet(F)$. This connection takes the shape of homomorphisms of graded algebras
\[
\xymatrix{
& K_\bullet(F)/2K_\bullet(F)\ar[dl]^{\nu}\ar[dr]^{\eta} & \\
\gr W(F)\ar[rr]^{e} && H^\bullet(G_F,\F_2),
}
\]
which Milnor proved to be isomorphisms in some special circumstances. \emph{Milnor conjectures} claim that the above maps are isomorphisms of graded algebras for all fields of characteristic different from $2$.
Milnor conjectures have recently been proved in full generality: the map $\eta$ was shown to always be an isomorphism by Voevodsky in \cite{voe}, while the map $e$ was shown to always be well-defined and an isomorphism by D.~Orlov, Vishik and Voevodsky in \cite{orvivo}.
The precise connection between Galois groups and Witt rings is established in \cite{MSp}. Analogous results for odd $p$ and the graded Witt ring replaced by Galois cohomology are treated in \cite{efrmin:witt} and \cite{cem:quot}.

There have been several attempts to encode the abstract ring-theoretic properties of Witt rings of quadratic forms into a set of axioms. The aim was to define a unified class of rings that includes Witt rings of quadratic forms and at the same time is flexible enough to also describe other families of Witt rings. One of the two equivalent definitions that is now the most commonly used is the following (see \cite{marshall:absWitt}).
\begin{defn}
An \emph{abstract Witt ring} is a pair $W=(R,G)$ such that
\begin{enumerate}
\item $R$ is a commutative ring with unit $1$;
\item $G$ is a subgroup of the multiplicative group $R^\times$ (the \emph{group of square classes}), it contains $-1$, it has exponent $2$, and it generates $R$ as an additive group;
\item The ideal $I_W\id R$ generated by elements of the form $a+b$, with $a,b \in G$, (the \emph{augmentation ideal}) satisfies
\begin{enumerate}
\item[(AP1):] If $a \in G$, then $a \not\in I_W$;
\item[(AP2):] If $a,b \in G$ and $a+b \in I_W^2$, then $a+b=0$;
\item[(WC):] If $a_1+\cdots+a_n = b_1+\cdots+b_n$, with $n \geq 3$ and all $a_i,b_i \in G$, then there exist $a,b,c_3,\cdots,c_n \in G$ such that $a_2+\cdots+a_n = a + c_3\cdots+c_n$ and $a_1 + a = b_1 + b$.
\end{enumerate}
\end{enumerate}

A \emph{morphism} of abstract Witt rings $(R_1,G_1)\to (R_2,G_2)$ is a ring homomorphism $\alpha:R_1\to R_2$ such that $\alpha(G_1)\subseteq\alpha(G_2)$.
\end{defn}
\begin{rem}
The Witt ring of quadratic forms over a field $F$ can be viewed as an abstract Witt ring $W=(W(F),G)$ with $G=F^\times/(F^\times)^2$, and $I_W$ is the ideal of $W(F)$ of even-dimensional quadratic forms. Axioms (AP1) and (AP2) are the first two instances of the infinite family of \emph{Arason-Pfister properties}
\[
AP(k):\mbox{ if }a_1+\dots+a_n\in I_W^k \mbox{ and } n<2^k\mbox{, then }a_1+\dots+a_n=0.
\]
Axiom (WC) is a consequence of the Witt Chain Lemma and the (Dickson-)Witt Cancellation Theorem (\cite{dickson}, \cite{witt}).

Observe that, for any abstract Witt ring $W=(R,G)$, $W/I_W=\Z/2\Z$, as it is for traditional Witt rings of quadratic forms.
\end{rem}
In the category of abstract Witt rings one can perform two basic operations: the direct product and the group ring construction.
\begin{defn}
Let $W_1=(R_1,G_1)$ and $W_2=(R_2,G_2)$ be abstract Witt rings. The \emph{direct product} of $W_1$ and $W_2$ is the abstract Witt ring $W_1\pr W_2=(R,G)$, with $G=G_1\prod G_2$ being the direct product of $G_1$ and $G_2$, and $R$ being the subring of the ring-theoretic direct product $R_1\prod R_2$ that is additively generated by $G$.
\end{defn}
\begin{defn}\label{def:group ring}
The \emph{group ring} of the abstract Witt ring $W=(R,G)$ \emph{over the group} $C_2=\{1,x\}$ of order $2$ is $W[x]=(R[C_2], G\times C_2)$.
\end{defn}

To each abstract Witt ring we associate a graded object defined by successive powers of its augmentation ideal.
\begin{defn}\label{defn:graded Witt ring}
Let $W=(R,G)$ be an abstract Witt ring with augmentation ideal $I_W$. The associated \emph{graded abstract Witt ring} is
\[
\gr W=\oplus_{i=0}^\infty I_W^i/I_W^{i+1},
\]
where by convention $I_W^0=R$.
If $r\in I_W^{i}$, the corresponding element $\overline r=r+I_W^{i+1}\in I_W^{i}/I_W^{i+1}$ of $\gr W$ is called the \emph{initial form} of $r$.
\end{defn}
The product of $R$ induces a well-defined product on $\gr W$ in the usual way: for $r\in I_W^{i}$ and $s\in I_W^{j}$,
\[
\overline{r}\cdot\overline{s}=\overline{rs}=rs+I_W^{i+j+1}\in I_W^{i+j}/I_W^{i+j+1}.
\]
\begin{prop}
Let $W_1=(R_1,G_1)$, $W_2=(R_2,G_2)$ be abstract Witt rings. Then $\gr (W_1\pr W_2)=\gr W_1\sqcap\gr W_2$.
\end{prop}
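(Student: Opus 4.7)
The plan is to carry out a direct computation of $\gr(W_1\pr W_2)$ by pinning down the augmentation ideal and its successive powers inside the direct product, then matching the resulting graded object with the defining presentation of $\gr W_1\sqcap \gr W_2$ from Definition \ref{def:operations on algebras}.

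The first step is to identify the underlying ring $R$ of $W_1\pr W_2$ explicitly. Since the augmentation $R\twoheadrightarrow R/I_W\cong\Z/2\Z$ is canonical in any abstract Witt ring, and since the two compositions $R\hookrightarrow R_1\times R_2\twoheadrightarrow R_i/I_{W_i}\cong\Z/2\Z$ agree on the generating set $G_1\times G_2$ (each element of $G_i$ has augmentation $1$), they must coincide. Hence $R=\{(r_1,r_2)\in R_1\times R_2\mid \epsilon_1(r_1)=\epsilon_2(r_2)\}$, where $\epsilon_i$ denotes the augmentation of $W_i$, and consequently $I_{W_1\pr W_2}=I_{W_1}\times I_{W_2}$.

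The heart of the argument is to show that $I_{W_1\pr W_2}^n=I_{W_1}^n\times I_{W_2}^n$ for every $n\geq 1$. The inclusion $\subseteq$ is immediate from componentwise multiplication. For the reverse, given $r_1\in I_{W_1}^n$ written as a sum of $n$-fold products of elements of $I_{W_1}$, each factor $c\in I_{W_1}$ lifts to $(c,0)\in R$, which in fact lies in $I_{W_1\pr W_2}$ since both of its coordinates have augmentation zero. Multiplying these lifts gives $(r_1,0)\in I_{W_1\pr W_2}^n$; a symmetric argument handles $(0,r_2)$, and then $(r_1,r_2)=(r_1,0)+(0,r_2)$. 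Passing to successive quotients produces $\gr^n(W_1\pr W_2)\cong \gr^n W_1\oplus \gr^n W_2$ for $n\geq 1$, while $\gr^0(W_1\pr W_2)\cong\Z/2\Z$ matches the ground field of the direct sum. The multiplicative structure also matches: lifts $(r_1,0)$ and $(0,r_2)$ of classes in $\gr^m W_1$ and $\gr^n W_2$ satisfy $(r_1,0)\cdot(0,r_2)=0$, which is precisely the cross-term vanishing relation in the definition of $\sqcap$.

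The main obstacle I anticipate is the ideal-power equality above: while the degree-one case follows immediately from the augmentation characterization of $R$, for $n\geq 2$ one must verify that elements of $I_{W_1}^n\times\{0\}$ genuinely arise as $n$-fold products \emph{inside} the direct product ring $R$, rather than merely inside the ambient ring $R_1\times R_2$. The observation that makes this go through is that the lift $(c,0)$ of any $c\in I_{W_1}$ already sits in $I_{W_1\pr W_2}$, so componentwise multiplication of lifts respects the ideal filtration. Once this is in place, everything else is a direct identification of graded components and their products with the pieces of $\gr W_1\sqcap \gr W_2$.
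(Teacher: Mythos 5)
Your proof is correct and follows essentially the same route as the paper: both arguments reduce to establishing $I^n_{W_1\pr W_2}=I^n_{W_1}\prod I^n_{W_2}$ for all $n\geq 1$ and then identifying the graded components and checking that the componentwise products and the vanishing cross-terms match the definition of $\sqcap$. The only cosmetic difference is that the paper gets the key inclusion $I_{W_1}\prod\{0\}\subseteq I_{W_1\pr W_2}$ directly from the identity $(a+b,0)=(a,1)+(b,-1)$, whereas you route it through the claim that $R$ is the full equalizer of the two augmentations; the reverse inclusion of that claim (which your ``hence'' does not quite cover) still needs the same one-line trick of pairing the summands of $r_1$ and $r_2$ with copies of $1$ and $-1$, so in substance the two arguments coincide.
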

\begin{proof}
The direct product is defined in such a way that, for $i\geq 1$, $I^i_{W_1\pr W_2}=I^i_{W_1}\prod I^i_{W_2}$, the ring-theoretic direct product of ideals (see \cite[Section 5.4]{marshall:absWitt}). 
In fact, if $a,b\in G_1$, then $(a+b,0)=(a,1)+(b,-1)$ is the sum of two elements in $G_1\prod G_2$, which proves that $I_{W_1}\prod\{0\}\subseteq I_{W_1\pr W_2}$. The inclusion $\{0\}\prod I_{W_2}\subseteq I_{W_1\pr W_2}$ is proven analogously. From these inclusions, it follows that, for all $i\geq 1$, $I^i_{W_1}\prod\{0\}\subseteq I^i_{W_1\pr W_2}$ and $\{0\}\prod I^i_{W_2}\subseteq I^i_{W_1\pr W_2}$, and hence $I^i_{W_1}\prod I^i_{W_2}\subseteq I^i_{W_1\pr W_2}$. The reverse inclusion follows easily from the definition of the operations of a ring-theoretic direct product.
Now, for any $i\geq 1$ the maps
\[\begin{array}{l}
\Phi_i:{I_{W_1}^i}/{I_{W_1}^{i+1}}\prod{I_{W_2}^i}/{I_{W_2}^{i+1}}\to{(I_{W_1}^i\prod I_{W_2}^i)}/{(I_{W_1}^{i+1}\prod I_{W_2}^{i+1})}\\
\\
\Phi_i(a+I_{W_1}^{i+1},b+I_{W_2}^{i+1})=(a,b)+(I_{W_1}^{i+1}\prod I_{W_2}^{i+1})
\end{array}
\]
are well-defined isomorphisms of $\F_2$-vector spaces. We also define $\Phi_0=id_{\F_2}$. These maps are also compatible with the product, in the sense that the diagram
\[
\xymatrix@C-24pt{
\left(\frac{I_{W_1}^i}{I_{W_1}^{i+1}}\prod\frac{I_{W_2}^i}{I_{W_2}^{i+1}}\right)\ar_{\Phi_i}[d] & \times & \left(\frac{I_{W_1}^j}{I_{W_1}^{j+1}}\prod\frac{I_{W_2}^j}{I_{W_2}^{j+1}}\right)\ar_{\Phi_j}[d]\ar^{\text{product in }\gr W_1\sqcap\gr W_2}[rrrrrrrrrrrrrrrr] &&&&&&&&&&&&&&&& \frac{I_{W_1}^{i+j}}{I_{W_1}^{i+j+1}}\prod\frac{I_{W_2}^{i+j}}{I_{W_2}^{i+j+1}}\ar_{\Phi_{i+j}}[d] \\
\frac{I_{W_1}^i\prod I_{W_2}^i}{I_{W_1}^{i+1}\prod I_{W_2}^{i+1}} & \times & \frac{I_{W_1}^j\prod I_{W_2}^j}{I_{W_1}^{j+1}\prod I_{W_2}^{j+1}}\ar^{\text{product in }\gr (W_1\pr W_2)}[rrrrrrrrrrrrrrrr] &&&&&&&&&&&&&&&& \frac{I_{W_1}^{i+j}\prod I_{W_2}^{i+j}}{I_{W_1}^{i+j+1}\prod I_{W_2}^{i+j+1}}
}
\]
commutes for any $i,j\geq 1$ and a similar compatibility relation holds for $\Phi_0$.
Hence $\Phi=\oplus_{i=0}^\infty\Phi_i:\gr W_1\sqcap\gr W_2\to\gr (W_1\pr W_2)$ is an isomorphism of graded algebras.
\end{proof}
\begin{prop}\label{prop:graded witt ring group construction}
Let $W=(R,G)$ be an abstract Witt ring. Then $\gr W[x]=(\gr W)(t\mid y)$, where $t=\overline{(1+1)}\in I_{W[x]}/I_{W[x]}^{2}$ and $y=\overline{(1+x)}\in I_{W[x]}/I_{W[x]}^{2}$.
\end{prop}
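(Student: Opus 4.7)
The plan is to construct a natural graded $\F_2$-algebra homomorphism $\phi\colon (\gr W)(t\mid y)\to \gr W[x]$ and to prove it is an isomorphism by explicitly computing the $I_{W[x]}$-adic filtration. Write $v = 1 + x \in R[C_2]$, so that $\{1, v\}$ is an $R$-basis of $R[C_2]$ and the simple computation $v^2 = 1 + 2x + x^2 = 2v$ holds. This already suggests that $y = \bar v$ will satisfy $y^2 = \overline{1+1}\cdot y = ty$ in $\gr W[x]$, matching one of the defining relations of the twisted extension.

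The core technical step is to establish, by induction on $n$, the direct-sum decomposition
\[
I_{W[x]}^n = I_W^n \oplus I_W^{n-1} v \qquad (n\geq 1,\; I_W^0 := R).
\]
For the base case, I would identify $I_{W[x]}$ as the kernel of the augmentation $R[C_2]\to\F_2$ (which sends $x\mapsto 1$ since $x\in G\times C_2$) and express elements in the $R$-basis $\{1, v\}$: an element $r + sv$ lies in $I_{W[x]}$ iff $r \in I_W$. The inductive step reduces to showing that $I_W^{n-1} v^2 \subseteq I_W^n v$, which follows at once from $v^2 = 2v$ together with $2 \in I_W$. Directness at each stage is inherited from the $R$-freeness of $R[C_2]$ on the basis $\{1, x\}$.

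Passing to the associated graded now yields $(\gr W[x])_n = (\gr W)_n \oplus (\gr W)_{n-1}\cdot y$. At this point I would verify the remaining relations of $(\gr W)(t\mid y)$: commutativity $y\bar a = \bar a y$ for $\bar a\in(\gr W)_1$ holds because $R[C_2]$ is commutative, and in characteristic $2$ (which applies since $W/I_W = \F_2$ makes $\gr W$ an $\F_2$-algebra) this coincides with the skew-commutation relation $y\bar a + \bar a y = 0$; the condition $t + t = 0$ in Definition~\ref{def:operations on algebras} is automatic for the same reason. Hence the universal property of the quotient defining the twisted extension yields the desired well-defined graded $\F_2$-algebra homomorphism $\phi$.

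Surjectivity of $\phi$ is immediate from the decomposition of each graded component. Injectivity is clean: elements of the domain have a unique normal form $u + wy$ with $u, w \in \gr W$ by Remark~\ref{rem:twisted ext varia}, and $\phi$ sends this to $u + wy$ sitting inside the internal direct sum $(\gr W)_n \oplus (\gr W)_{n-1} y$, so $\phi(u + wy) = 0$ forces $u = w = 0$. The principal obstacle is the inductive decomposition of $I_{W[x]}^n$: while routine, it is precisely where the crucial identity $v^2 = 2v$ is needed, and where one must check that the mixed product $Rv\cdot Rv$ does not spill outside the proposed filtration piece $I_W^{n-1}v$.
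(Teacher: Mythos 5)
Your proposal is correct and follows essentially the same route as the paper: both rest on the decomposition $I_{W[x]}^n = I_W^n \oplus (1+x)I_W^{n-1}$ (which the paper asserts with less detail than your induction via $v^2=2v$, $2\in I_W$ supplies) and on the identity $(1+x)(1+x)=(1+1)(1+x)$ giving $y^2=ty$, the paper then defining the isomorphism degreewise as explicit maps $\Phi_i$ and checking product compatibility where you invoke the universal property of the twisted extension plus the normal form of Remark~\ref{rem:twisted ext varia}. The differences are purely presentational.
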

\begin{proof}
We begin noticing that, since $1+x\in I_{W[x]}$, $I^i_{W[x]}=I^i_{W}\prod (1+x)I^{i-1}_{W}$. Now, for any $i\geq 1$ the maps
\[\begin{array}{l}
\Phi_i:{I_{W}^i}/{I_{W}^{i+1}}\prod y{I_{W}^{i-1}}/{I_{W}^{i}}\to{(I_{W}^i\prod (1+x)I_{W}^{i-1})}/{(I_{W}^{i+1}\prod (1+x)I_{W}^{i})}\\
\\
\Phi_i(a+I_{W}^{i+1},y(b+I_{W}^{i}))=(a,(1+x)b)+(I_{W}^{i+1}\prod (1+x)I_{W}^{i})
\end{array}
\]
are well-defined isomorphisms of $\F_2$-vector spaces. We also define $\Phi_0=id_{\F_2}$.

Moreover, since $x$ has order $2$ by construction (see Definition \ref{def:group ring}), $(1+x)(1+x)=(1+1)(1+x)$ in $W[x]$, that is, $y^2=ty$ in $\gr W[x]$. As a consequence, the maps $\Phi_i$ are also compatible with the product, in the sense that the diagram
\[
\xymatrix@C-24pt{
\left(\frac{I_{W}^i}{I_{W}^{i+1}}\prod y\frac{I_{W}^{i-1}}{I_{W}^{i}}\right)\ar_{\Phi_i}[d] & \times & \left(\frac{I_{W}^{j}}{I_{W}^{j+1}}\prod y\frac{I_{W}^{j-1}}{I_{W}^{j}}\right)\ar_{\Phi_j}[d]\ar^{\text{product in }(\gr W_1)(t\mid y)}[rrrrrrrrrrrrrrrr] &&&&&&&&&&&&&&&& \frac{I_{W}^{i+j}}{I_{W}^{i+j+1}}\prod y\frac{I_{W}^{i+j-1}}{I_{W}^{i+j}}\ar_{\Phi_{i+j}}[d] \\
\frac{I_{W}^i\prod(1+x)I_{W}^{i-1}}{I_{W}^{i+1}\prod(1+x)I_{W}^{i}} & \times & \frac{I_{W}^j\prod(1+x)I_{W}^{j-1}}{I_{W}^{j+1}\prod(1+x)I_{W}^{j}}\ar^{\text{product in }\gr W[x]}[rrrrrrrrrrrrrrrr] &&&&&&&&&&&&&&&& \frac{I_{W}^{i+j}\prod(1+x)I_{W}^{i+j-1}}{I_{W}^{i+j+1}\prod(1+x)I_{W}^{i+j}}
}
\]
commutes for any $i,j\geq 1$ and a similar compatibility relation holds for $\Phi_0$. Hence $\Phi=\oplus_{i=0}^\infty\Phi_i:(\gr W)(t\mid y)\to\gr W[x]$ is an isomorphism of graded algebras.
\end{proof}

\begin{defn}
An abstract Witt ring $(R,G)$ is \emph{realisable} if there is a field $F$ of $\chr F\neq2$ such that $R\cong W(F)$ as rings and $G\cong F^\times/(F^\times)^2$ as groups. An abstract Witt ring $(R,G)$ is \emph{finitely generated} if $|G|<\infty$.
\end{defn}

The basic examples of realisable abstract Witt rings are 
the rings $W(F)$, where $F$ is a local field of characteristic 0, that is, $F$ is either $\R$, $\C$, or a finite extension of $\Q_p$ for some prime $p$. In addition, the direct product of two realisable abstract Witt rings is realisable (see \cite{kula1,kula2}). On the Galois-theoretic level, the direct product of realisable Witt rings corresponds to the free pro-$p$ product of pro-$p$ groups (see also \cite[Section 4.2]{Koszul1}). Finally, if $W$ is realisable as the Witt ring of quadratic forms over $F$, then also the group ring $W[x]$ is realisable as the Witt ring of quadratic forms over the field of formal power series $F((X))$. On the Galois-theoretic level, the group ring construction of a realisable Witt ring corresponds to the cyclotomic semidirect product of a pro-$p$ group with $\Z_p$ (see also \cite[Section 4.2]{Koszul1}).

\begin{defn}
An abstract Witt ring is said to be of \emph{elementary type} if it is obtained by applying finitely many direct products and group ring constructions, starting with the basic Witt rings $W(\mbox{local field})$.
\end{defn}
\begin{conj}[Elementary Type Conjectures for Witt rings]\label{conj:ETC Witt}\
\begin{description}
\item[{\rm Weak ETC}] Every realisable finitely generated abstract Witt ring is of elementary type.
\item[{\rm Strong ETC}] Every finitely generated abstract Witt ring is of elementary type (and hence realisable).
\end{description}
\end{conj}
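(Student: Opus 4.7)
The plan is to attempt the Weak ETC by induction on $|G|$, with the Strong ETC approached afterwards by showing that any abstract Witt ring arising in the inductive classification is in fact realisable. The base case $|G|=2$ is classical (only $\Z/4\Z$ and $\F_2[C_2]$ arise, both Witt rings of local fields). For the inductive step, given a finitely generated realisable $W=(R,G)$ with $|G|>2$, the strategy is to produce either a nontrivial direct product decomposition $W\cong W_1\pr W_2$ or a nontrivial group ring decomposition $W\cong W'[x]$, so that the inductive hypothesis applies to each factor. Reassembling via Proposition 8.4 and Proposition 8.5 and iterating then expresses $\gr W$ as a chain of direct sums and twisted extensions built from the graded Witt rings of local fields, which are the base elementary type pieces.

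The first substantive step is to recast the decomposition question in terms of the space of orderings $X_W$ and Marshall's structural theory. Concretely, one would first show that if $X_W$ is disconnected in the appropriate sense, then $W$ splits as a direct product; here disconnectedness should be detected by the existence of a nontrivial idempotent in $R/I_W^N$ for some $N$, lifted to $R$ using the Arason--Pfister axioms (AP1), (AP2). Second, one would look for a \emph{rigid} square class $a\in G$ in the sense of Ware's definition used in Section~6: an element whose value form $\langle 1,a\rangle$ represents only $\{1,a\}$. In the realisable setting, a rigid $a$ gives a quadratic extension of $F$ whose Witt ring is naturally a subring $W'\subseteq W$ with $W\cong W'[x]$ via the group ring construction of Definition 8.3; the abstract analogue is the Kula--Marshall splitting principle for fans of even rank.

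The main obstacle is the case where $W$ is \emph{basic} (no direct product decomposition) and contains no rigid element. In this situation one must show that $W$ is already the Witt ring of a local field, which is the arithmetic core of the conjecture. The plan here is twofold: on the one hand, use the Milnor conjectures (now theorems of Voevodsky and of Orlov--Vishik--Voevodsky) to transport the question to Galois cohomology, where the absence of rigidity forces the cup product pairing on $H^1(G_F,\F_2)$ to satisfy a strong nondegeneracy, mimicking the Demushkin condition described in Section~3; on the other hand, apply the Bogomolov--Pogorelov and Koenigsmann-type recovery theorems to reconstruct a local valuation on $F$ from this cohomological rigidity. The combination should force $G_F(2)$ to be Demushkin, and hence force $W$ to be the Witt ring of a local field.

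The hard part is unquestionably ruling out \emph{exotic} basic, rigid-free, finitely generated realisable Witt rings that are not Witt rings of local fields. This is precisely the point at which every previous attack on the conjecture has stalled, so one should expect the proof to require either a new valuation-theoretic criterion that detects local fields directly from Galois cohomology (going beyond the Efrat--Minac--Topaz style results currently available), or a homological reduction exploiting the enhanced Koszul properties established earlier in this paper: if one can prove that strong or universal Koszulity of $H^\bullet(G_F(2),\F_2)$ together with rigidity-failure forces a Demushkin relation, then combined with Minac--Spira type results connecting $W(F)$ and $G_F(2)$, the inductive step would close. For the Strong ETC, an additional layer is needed to show that abstract Witt rings produced by the inductive dissection are all individually realisable; this appears to require realisation results for direct products and group rings of Witt rings of local fields in the style of Kula, which already exist and should suffice once the Weak ETC is in hand.
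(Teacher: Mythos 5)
This statement is a \emph{conjecture}, not a theorem of the paper: the authors explicitly record that ``only very partial results in the direction of Elementary Type Conjectures for Witt rings are known'' and they cite exactly those partial results (Carson--Marshall for $|G|\leq 32$, Cordes for fields with at most four quaternion algebras, Min\'a\v{c} and Jacob for Pythagorean formally real fields) as the source of the unconditional theorems in Section~9. There is no proof in the paper for you to be compared against, and your proposal is not a proof either --- it is a research programme whose decisive step you yourself flag as open.

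Concretely, the gap is your third step. The reduction to the case of a \emph{basic} (directly indecomposable), rigid-free, finitely generated Witt ring is standard Marshall-style structure theory and is fine in outline; the first two steps (splitting off direct products via the space of orderings, splitting off group rings via rigid square classes) are essentially how the known partial results proceed. But the assertion that a basic, rigid-free $W$ must be the Witt ring of a local field is precisely the content of the conjecture, and nothing you invoke delivers it. The Milnor conjectures identify $\gr W(F)$ with $H^\bullet(G_F,\F_2)$ but do not constrain which graded algebras occur; absence of rigidity does not yield the nondegenerate pairing plus one-dimensionality of $H^2$ that defines a Demushkin group; and no valuation-recovery theorem currently reconstructs a local field from these hypotheses. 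Moreover, for the Strong ETC your route is unavailable in principle: a general finitely generated abstract Witt ring need not come from any field, so there is no $G_F(2)$, no Galois cohomology, and no Demushkin condition to appeal to --- the argument would have to be carried out purely on the axioms (AP1), (AP2), (WC), which is exactly where all previous attempts stall. You should treat this statement as the open conjecture it is, not as something admitting a proof by the methods sketched.
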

Up to now, only very partial results in the direction of Elementary Type Conjectures for Witt rings are known. Notably,
\begin{thm}[{\cite[Section 5]{carmar}}]\label{fact:etc 32}
Strong ETC holds for abstract Witt rings $(R,G)$ with $|G|\leq 32$.
\end{thm}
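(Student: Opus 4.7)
The theorem is cited from Carson--Marshall, so I would approach it by following the classification philosophy for finitely generated abstract Witt rings up to a fixed size. The plan is to reduce the problem to an enumeration, and then to check that every possibility in the enumeration is of elementary type.

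First, I would exploit the structural invariants of an abstract Witt ring $W=(R,G)$: the dimension $d=\dim_{\F_2}(G/\{\pm1\})$, the level $s(W)$ (smallest $k$ with $-1$ expressible as a sum of $k$ squares), and the behaviour of the subgroup of rigid elements. For $|G|\leq 32$ one has $d\leq 4$ or $d\leq 5$ (depending on whether $-1$ is a square), and this bounds the number of $1$-fold Pfister forms one has to track. I would then invoke the dichotomy, present throughout Marshall's axiomatic Witt-ring theory, which says that either (i) $W$ has a nontrivial \emph{Witt decomposition} $W\cong W_1\pr W_2$ as a direct product, or (ii) $W$ has a \emph{rigid} element $a\in G$ with prescribed value set, in which case $W$ can be written as a group-ring extension $W\cong W_0[x]$ over a smaller abstract Witt ring $W_0$, or (iii) $W$ is itself the Witt ring of a local field.

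With this trichotomy in hand, the strategy becomes an induction on $|G|$: in case (i) or (ii) the problem is reduced to abstract Witt rings of strictly smaller square-class group, and the inductive hypothesis disposes of them; in case (iii) the ring is a basic ET building block by definition. The base cases $|G|\in\{2,4\}$ are handled by explicit inspection: these are the Witt rings of $\R$, $\C$ (with $W(\C)=\F_2$ trivially), and of suitable local fields like $\Q_p$ with $p$ odd, all of which are elementary type by definition.

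The main obstacle is step (ii): one must verify that when a rigid element $a$ with $|D_W\langle 1,a\rangle|=2$ is present, the ``fibre'' $W_0$ obtained by modding out the influence of $a$ is itself a genuine abstract Witt ring, not merely a quotient in some weaker sense. This relies on the Witt Cancellation axiom (WC) together with the Arason--Pfister axioms (AP1)--(AP2), and the verification grows in complexity as $|G|$ increases because one must control how multiple rigid or non-rigid elements interact. The serious combinatorial work of \cite{carmar} is precisely the enumeration at $|G|=16$ and $|G|=32$, where the number of a priori possible quaternionic structures is largest; here I would proceed by enumerating, using the group of quaternion algebras $Q(W)=I_W^2/I_W^3$ of dimension at most $d(d-1)/2$, the compatible Hasse-type invariants, and then matching each abstract possibility to an elementary-type construction, concluding that the induction closes at $|G|=32$.
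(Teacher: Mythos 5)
The paper offers no proof of this statement: it is imported verbatim from \cite[Section 5]{carmar}, so there is nothing internal to compare your argument against. Judged on its own terms, though, your proposal has a genuine gap at its core. The ``trichotomy'' you invoke --- every finitely generated abstract Witt ring either (i) decomposes as a direct product, (ii) is a group ring $W_0[x]$ over a smaller Witt ring, or (iii) is a basic indecomposable (Witt ring of a local field) --- is not a theorem of Marshall's axiomatic theory. It is precisely the Strong Elementary Type Conjecture restated. If such a trichotomy held for all finitely generated abstract Witt rings, the conjecture would be proved in full generality, and Theorem~\ref{fact:etc 32} would not need the restriction $|G|\leq 32$. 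Marshall's theory does supply sufficient criteria for decomposability and for being a group ring (via the basic part and rigid elements), but it does not assert that every ring falls under one of them; the possibility of an ``exotic'' indecomposable, non-group-ring Witt ring is exactly what must be excluded.

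Consequently your induction is circular as structured: the inductive step presupposes the classification it is meant to establish. What Carson and Marshall actually do is the part you relegate to an ``obstacle'' at the end --- a finite but intricate enumeration, for each $|G|\leq 32$, of the possible quaternionic structures (linked quaternionic mappings) compatible with the axioms, followed by a verification that each one is realised by an elementary type construction. That enumeration is not a supporting lemma; it is the entire content of the theorem, and your proposal does not carry it out. Your identification of the relevant invariants ($d$, the level, rigid elements, the quaternionic structure $I_W^2/I_W^3$) and of the base cases is sound, and the reduction steps (i) and (ii), \emph{once applicability is established}, are legitimate; but as written the argument assumes applicability rather than proving it.
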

From this and our results in Section \ref{sec:baddies} we obtain immediately the following unconditional result.
\begin{thm}\label{thm:32}
If $F$ is a field of $\chr F\neq 2$ and $|F^\times/(F^\times)^2|\leq 32$, then the algebra $H^\bullet(G_F(2),\F_2)$ has a Koszul filtration, and in particular it is Koszul.
\end{thm}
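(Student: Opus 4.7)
The plan is to transfer the question from Galois cohomology to graded Witt rings, apply the Carson--Marshall theorem in that setting, and then recover the conclusion by invoking the preservation results of Section \ref{sec:uK}.

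First I would use the Milnor conjectures to identify $H^\bullet(G_F(2),\F_2)$ with $\gr W(F)$. Concretely, the Orlov--Vishik--Voevodsky theorem provides an isomorphism of graded $\F_2$-algebras $e\colon \gr W(F)\to H^\bullet(G_F,\F_2)$, and the inflation map $H^\bullet(G_F(2),\F_2)\to H^\bullet(G_F,\F_2)$ is an isomorphism by Bloch--Kato. (Our Standing Hypothesis \ref{ass} is automatic here, since $-1$ is a primitive second root of unity in any field of characteristic $\neq 2$, and $|F^\times/(F^\times)^2|\leq 32$ forces $H^1(G_F(2),\F_2)$ to be finite-dimensional, so $G_F(2)$ is finitely generated.) Thus it suffices to prove that $\gr W(F)$ has a Koszul filtration. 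Since $|F^\times/(F^\times)^2|\leq 32$, Theorem \ref{fact:etc 32} of Carson--Marshall tells us that $W(F)$ is of elementary type: it is obtained from Witt rings $W(L)$ of local fields $L$ of characteristic $0$ by finitely many direct products $\pr$ and group ring constructions $W\mapsto W[x]$.

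Next I would push this description to the graded side via the two compatibility statements of Subsection \ref{ssec:witt}: direct products of Witt rings descend to direct sums $\sqcap$ of graded algebras, and group ring constructions descend to twisted extensions $(\gr W)(t\mid y)$ (this is Proposition \ref{prop:graded witt ring group construction}). So $\gr W(F)$ is assembled in finitely many steps from algebras of the form $\gr W(L)\cong H^\bullet(G_L(2),\F_2)$, with $L$ a local field of characteristic $0$, via $\sqcap$ and twisted extensions. For each such base case, $G_L(2)$ is either trivial ($L=\C$), equal to $C_2$ ($L=\R$), or an infinite Demushkin pro-$2$ group (when $L$ is a finite extension of some $\Q_q$, since $L$ then contains the primitive square root of unity $-1$). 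In every one of these cases, Propositions \ref{prop:free uK}, \ref{prop:C_2 uK} and \ref{prop:demushkin uK} guarantee that $\gr W(L)$ is universally Koszul.

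Finally, I would iterate Propositions \ref{prop:direct sum universally Koszul} and \ref{prop:twisted extension universally Koszul}, which state that universal Koszulity is preserved by the two operations, over the ET decomposition of $W(F)$. This yields that $\gr W(F)\cong H^\bullet(G_F(2),\F_2)$ is universally Koszul, and hence admits the Koszul filtration $\mathcal L(\gr W(F))$ by the equivalence (1)$\Leftrightarrow$(4) of Proposition \ref{prop:equiv to uK}. In particular it is Koszul, establishing the theorem. The real content lies in the initial translation via the Milnor conjectures combined with the Carson--Marshall theorem; once we are working with $\gr W(F)$, the induction on the ET tree is automatic from the preservation lemmas already proved. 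If one preferred to stay at the weaker level of Koszul filtrations (Propositions \ref{prop:direct sum Koszul filtration} and \ref{prop:twisted extension Koszul filtration}), the main bookkeeping issue would be the persistence of property (\heartsuit) along the recursion, but this is sidestepped here because $\mathcal L(-)$ trivially satisfies it at every stage.
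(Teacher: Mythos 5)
Your proposal is correct and takes essentially the same route the paper intends: Theorem \ref{fact:etc 32} gives the elementary-type decomposition of $W(F)$, the results of Subsection \ref{ssec:witt} together with the Milnor conjectures transport that decomposition to $\gr W(F)\cong H^\bullet(G_F(2),\F_2)$ as an iterated direct sum and twisted extension of the cohomology of trivial, $C_2$, and infinite Demushkin pro-$2$ groups, and Propositions \ref{prop:free uK}, \ref{prop:C_2 uK}, \ref{prop:demushkin uK}, \ref{prop:direct sum universally Koszul} and \ref{prop:twisted extension universally Koszul} then give universal Koszulity, hence the Koszul filtration $\mathcal{L}$ via Proposition \ref{prop:equiv to uK}. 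This matches the paper's (implicit) argument and in fact establishes the stronger conclusion recorded as Theorem F.
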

Similarly, in \cite{cordes} Witt rings of fields which contain at most four quaternion algebras are classified. In particular, they satisfy the Strong ETC, so we obtain the following.
\begin{thm}\label{thm:4quat}
If $F$ is a field of $\chr F\neq 2$ and containing at most $4$ quaternion algebras, then the algebra $H^\bullet(G_F(2),\F_2)$ has a Koszul filtration, and in particular it is Koszul.
\end{thm}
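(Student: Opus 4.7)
The strategy mirrors the proof of Theorem \ref{thm:32}. First, invoke the main classification result of \cite{cordes}: the abstract Witt ring $W(F)$ of a field $F$ with $\chr F\neq 2$ containing at most four quaternion algebras is necessarily of elementary type, hence obtainable from Witt rings of local fields by finitely many direct products and group ring constructions. By the Milnor conjecture (settled in \cite{orvivo}), the map $e\colon \gr W(F)\to H^\bullet(G_F(2),\F_2)$ is an isomorphism of graded $\F_2$-algebras, and by Proposition \ref{prop:graded witt ring group construction} together with its counterpart for direct products, the elementary-type decomposition of $W(F)$ translates into an inductive description of $H^\bullet(G_F(2),\F_2)$ via finitely many direct sums and twisted extensions applied to the graded Witt rings of local fields.

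Next I would dispose of the base case. If $L$ is a local field of characteristic different from $2$, then $G_L(2)$ is either finitely generated free pro-$2$ (when $\sqrt{-1}\notin L$) or Demushkin (when $\sqrt{-1}\in L$), so Propositions \ref{prop:free uK}, \ref{prop:C_2 uK} and \ref{prop:demushkin uK} ensure that $H^\bullet(G_L(2),\F_2)$ is universally Koszul. By condition (4) of Proposition \ref{prop:equiv to uK}, its natural Koszul filtration is $\mathcal{L}(H^\bullet(G_L(2),\F_2))$, the collection of all left ideals generated in degree $1$. This filtration trivially satisfies property \eqref{eq:good filtration}, since adding $At$ to an ideal generated in degree $1$ yields an ideal still generated in degree $1$.

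Finally, close the induction by applying Propositions \ref{prop:direct sum Koszul filtration} and \ref{prop:twisted extension Koszul filtration}. The latter produces, at each step, a filtration that still satisfies \eqref{eq:good filtration}, as explicitly noted in its proof; a similar routine check shows that the direct-sum filtration $\mathcal{F}\sqcap\mathcal{G}$ inherits \eqref{eq:good filtration} from its factors, because for $t=t_A+t_B\in A_1\oplus B_1$ one has $(I_A{}^e+I_B{}^e)+Ct=(I_A+At_A)^e+(I_B+Bt_B)^e$. The conclusion is that $H^\bullet(G_F(2),\F_2)$ admits a Koszul filtration, and hence in particular its augmentation ideal has a linear resolution, so the algebra is Koszul. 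The only real obstacle is the bookkeeping of property \eqref{eq:good filtration} across the induction, but this comes essentially for free from the shape of the base filtrations, and no new ideas beyond those of Theorem \ref{thm:32} are needed.
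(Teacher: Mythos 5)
Your overall route --- Cordes' classification, the Milnor conjecture, the identifications $\gr (W_1\pr W_2)=\gr W_1\sqcap\gr W_2$ and $\gr W[x]=(\gr W)(t\mid y)$, and an induction over the elementary-type decomposition --- is the paper's. The gap is in your inductive step for direct sums. You justify that $\mathcal F\sqcap\mathcal G$ inherits property \eqref{eq:good filtration} by the identity $(I_A{}^e+I_B{}^e)+Ct=(I_A+At_A)^e+(I_B+Bt_B)^e$ for $t=t_A+t_B$. This identity is false: since $A_+$ and $B_+$ annihilate each other in $C=A\sqcap B$, one has $Ct=K(t_A+t_B)+A_+t_A+B_+t_B$, which contains the single degree-one line spanned by $t_A+t_B$ but not $t_A$ and $t_B$ separately. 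Already for $I_A=I_B=(0)$ with $t_A\neq0\neq t_B$ (which occurs whenever $-1$ is a nonsquare in both factors, e.g.\ a product of two dyadic or real Witt rings), the degree-one part of $Ct$ is $K(t_A+t_B)$, which is not of the form $V_A\oplus V_B$ with $V_A\subseteq A_1$, $V_B\subseteq B_1$; hence $Ct\notin\mathcal F\sqcap\mathcal G$ and \eqref{eq:good filtration} genuinely fails for the direct-sum filtration. Since the elementary-type construction freely interleaves direct products and group ring constructions, you cannot invoke Proposition \ref{prop:twisted extension Koszul filtration} after a direct-sum step without repairing this, and the repair would require enlarging the filtration and re-verifying the colon-ideal condition for the new ideals --- work you have not done.

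The paper sidesteps this entirely by running the induction with universal Koszulity rather than with hand-built filtrations: the base algebras $H^\bullet(G_L(2),\F_2)$ for local $L$ are universally Koszul (Propositions \ref{prop:free uK}, \ref{prop:C_2 uK}, \ref{prop:demushkin uK}), and universal Koszulity is preserved by direct sums and by twisted extensions with arbitrary $t$ (Propositions \ref{prop:direct sum universally Koszul} and \ref{prop:twisted extension universally Koszul}). No condition \eqref{eq:good filtration} need be tracked, because for a universally Koszul algebra the Koszul filtration $\mathcal L(C)$ consists of \emph{all} ideals generated in degree one and is trivially closed under adding $Ct$. This yields the stronger statement of Theorem F (universal Koszulity), of which the existence of a Koszul filtration is a corollary via Proposition \ref{prop:equiv to uK}. (A minor side point: for $p=2$ every field of characteristic $\neq2$ contains the primitive second root of unity $-1$, so the local base cases are the trivial group, $C_2$, and Demushkin groups; your dichotomy ``free when $\sqrt{-1}\notin L$'' is not the right criterion, though this does not affect the argument since all base cases are universally Koszul.)
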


By Merkurjev's Theorem \cite{merk}, the number of quaternion algebras over a field $F$, of $\chr F\neq 2$, coincides with the cardinality $|H^2(G_F(2),\F_2)|$. Hence, the hypothesis of Theorem \ref{thm:4quat} can be equivalently expressed by saying that $\chr F\neq 2$ and $\dim_{\F_2}H^2(G_F(2),\F_2)\leq 2$. But this dimension is the minimal number of generating relations of $G_F(2)$ (see \cite[Section I.4.3]{serre}). Thus, Theorem \ref{thm:4quat} can be reformulated as follows.
\begin{thm}
If $F$ is a field of $\chr F\neq 2$ and $G_F(2)$ has a presentation with finitely many generators and at most $2$ generating relations, then $H^\bullet(G_F(2),\F_2)$ has a Koszul filtration, and in particular it is Koszul.
\end{thm}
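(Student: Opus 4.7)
The plan is essentially to observe that this theorem is a direct reformulation of Theorem \ref{thm:4quat} via two standard equivalences that are already invoked in the paragraph preceding the statement. So I would not write anything substantially new, but rather chain together the identifications carefully.

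First I would unpack the hypothesis on generating relations. For a finitely generated pro-$2$ group $G$, the minimal number of generating relations in a pro-$2$ presentation coincides with $\dim_{\F_2} H^2(G,\F_2)$; this is the content of \cite[Section I.4.3]{serre}. So the assumption that $G_F(2)$ admits a presentation with finitely many generators and at most $2$ generating relations is equivalent to the two simultaneous conditions
\[
\dim_{\F_2} H^1(G_F(2),\F_2) < \infty \quad\text{and}\quad \dim_{\F_2} H^2(G_F(2),\F_2)\leq 2.
\]
The first condition matches our Standing Hypothesis \ref{ass}(3), since $\dim_{\F_2} H^1(G_F(2),\F_2)$ equals the minimal number of (topological) generators of $G_F(2)$.

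Next I would translate the $H^2$ bound into a statement about quaternion algebras. By Merkurjev's Theorem \cite{merk}, the group $H^2(G_F(2),\F_2)$ is in natural bijection with the $2$-torsion of the Brauer group of $F$, and each nontrivial class is represented by a unique quaternion algebra, so
\[
\#\{\text{quaternion algebras over }F\} = |H^2(G_F(2),\F_2)| = 2^{\dim_{\F_2} H^2(G_F(2),\F_2)} \leq 2^2 = 4.
\]
Hence $F$ falls under the hypothesis of Theorem \ref{thm:4quat}.

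Finally, I would invoke Theorem \ref{thm:4quat} directly to conclude that $H^\bullet(G_F(2),\F_2)$ admits a Koszul filtration, and therefore is Koszul (since each ideal of a Koszul filtration is a Koszul module, and in particular $A_+\in\mathcal F$ is Koszul, which is equivalent to Koszulity of the algebra). There is no obstacle to speak of: the entire content of the theorem is the translation dictionary \emph{(generating relations)} $\leftrightarrow$ \emph{($\dim H^2$)} $\leftrightarrow$ \emph{(quaternion algebras)}, and the substantive Koszul-theoretic input is entirely borrowed from Theorem \ref{thm:4quat}, which itself rests on the Elementary Type Conjecture being a theorem in the regime $|H^2|\leq 4$ via \cite{cordes}.
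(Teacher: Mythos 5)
Your proposal is correct and follows exactly the paper's own route: the paper derives this theorem as a direct reformulation of Theorem \ref{thm:4quat}, using Merkurjev's theorem to identify the number of quaternion algebras with $|H^2(G_F(2),\F_2)|$ and \cite[Section I.4.3]{serre} to identify $\dim_{\F_2}H^2(G_F(2),\F_2)$ with the minimal number of generating relations. Nothing is missing; the chain of identifications you spell out is precisely the paper's argument.
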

This generalises the main result of \cite{claudio} in the case $p=2$.

\subsection{Pythagorean formally real fields}
A field $F$ of $\chr F\neq 2$ is \emph{Pythagorean} if $F^2+F^2=F^2$ and \emph{formally real} if $-1$ is not a sum of squares in $F$.
A form of Elementary Type Conjecture has been proved for the family $\mathcal{PFR}$ of Pythagorean formally real fields with finitely many square classes (see \cite{minac}, \cite{jacob1}). Namely, 
$\mathcal{PFR}$ contains \emph{Euclidean} fields, that are fields $F$ such that $(F^\times)^2+(F^\times)^2=(F^\times)^2$ and $F^\times=(F^\times)^2\cup-(F^\times)^2$ (see \cite{becker}) and 
\begin{thm}\label{thm:PFR}
The family of maximal pro-$2$ quotients of absolute Galois groups of fields in $\mathcal{PFR}$ is described inductively as follows.
\begin{enumerate}
\item For any Euclidean field $E\in\mathcal{PFR}$ (for instance, $E=\R$), $G_E(2)=\Z/2\Z$.
\item\label{op2} For any two fields $F_1,F_2\in\mathcal{PFR}$ there exists $F\in\mathcal{PFR}$ such that $G_{F_1}(2)\ast_2 G_{F_2}(2)\cong G_{F}(2)$.
\item\label{op3} For any field $F_0\in\mathcal{PFR}$ and any finite product $Z=\prod_{i=1}^m\Z_2$ there exists $F\in\mathcal{PFR}$ such that $Z\rtimes G_{F_0}(2)\cong G_{F}(2)$, where the action of $G_{F_0}(2)$ on $Z$ is given by
\[
\sigma^{-1}z\sigma=z^{-1}\qquad\mbox{for any }\sigma\in G_{F_0}(2)\setminus\{1\}, \sigma^2=1 \mbox{ and }z\in Z
\]
\end{enumerate}
Moreover, each maximal pro-$2$ quotient of the absolute Galois group of a field in $\mathcal{PFR}$ can be obtained inductively from Galois groups of Euclidean fields applying a finite sequence of the two operations described in (2) and (3).
\end{thm}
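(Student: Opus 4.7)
The plan is to prove the three realisability statements (1)--(3) one by one and then establish the inductive description by induction on the number of square classes $d=\dim_{\F_2}F^\times/(F^\times)^2$, leveraging the structure theory of Witt rings of Pythagorean formally real fields.

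First I would dispatch (1). For a Euclidean $E$, the equalities $(E^\times)^2+(E^\times)^2=(E^\times)^2$ and $E^\times=(E^\times)^2\cup -(E^\times)^2$ give $|E^\times/(E^\times)^2|=2$, so by Kummer theory $H^1(G_E(2),\F_2)$ is one-dimensional and the unique quadratic extension is $E(\sqrt{-1})$. A short direct computation (expressing $a+bi$ as $(x+yi)^2$ reduces to extracting a square root of $a^2+b^2\in (E^\times)^2$) shows $E(\sqrt{-1})$ is quadratically closed, whence $G_E(2)\cong \Z/2\Z$. Statement (3) I would handle via iterated formal power series: $F:=F_0((t_1))\cdots((t_m))$ inherits the Pythagorean and formally real properties from $F_0$ (by Springer's theorem on quadratic forms over complete discrete valuation fields, iterated), and the Henselian nature of each valuation produces, by the standard description of the maximal pro-$2$ quotient of the absolute Galois group of a Henselian valued field, a group of the form $\Z_2\rtimes G_{F_0(\cdots)}(2)$; iterating gives $\Z_2^m\rtimes G_{F_0}(2)$, and because $F_0$ is formally real the involutions in $G_{F_0}(2)$ (coming from orderings via Artin--Schreier) act on the roots of unity via the cyclotomic character by inversion, which is the prescribed action.

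For (2), the point is to realise the abstract Witt-ring-theoretic direct product at the level of fields inside $\mathcal{PFR}$. I would appeal to the Kula--Marshall realisability machinery adapted to Pythagorean formally real fields: given $F_1,F_2\in\mathcal{PFR}$, one constructs $F$ either as a subfield of an appropriate compositum inside an algebraic closure, or by using a suitable pullback of the corresponding spaces of orderings and then lifting via the Craven realisation theorem. At the Galois-theoretic level, the free pro-$2$ product is the coproduct in the category of pro-$2$ groups and it is well known to correspond to the direct product of Witt rings; coupled with the realisation of the combined space of orderings, this yields $G_F(2)\cong G_{F_1}(2)\ast_2 G_{F_2}(2)$ with $F\in\mathcal{PFR}$.

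The moreover part is then proved by induction on $d$. The base case $d=1$ is exactly (1). For the inductive step, I would pass to the reduced Witt ring (which coincides with $W(F)$ for $F$ Pythagorean formally real) and to its associated finite space of orderings $X_F$. The main obstacle, and the heart of the argument, is the Marshall--Bos decomposition theorem for finite spaces of orderings: every such space either splits as a disjoint union $X_1\sqcup X_2$ of two nonempty subspaces, or is a \emph{group extension} of a smaller space by a finite elementary abelian $2$-group. Translating the first case back to fields and Galois groups yields a presentation $W(F)\cong W(F_1)\pr W(F_2)$ and hence $G_F(2)\cong G_{F_1}(2)\ast_2 G_{F_2}(2)$ with $F_1,F_2\in\mathcal{PFR}$ of strictly smaller square-class dimension; the second case yields $W(F)\cong W(F_0)[C_2^m]$ for some $F_0\in\mathcal{PFR}$ of strictly smaller dimension, which by Proposition \ref{prop:graded witt ring group construction} and its Galois-theoretic counterpart corresponds to $G_F(2)\cong \Z_2^m\rtimes G_{F_0}(2)$ with the action of (3). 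Applying the inductive hypothesis to $F_1,F_2$ or to $F_0$ completes the proof. The hardest and most delicate step is precisely this decomposition theorem: proving it from scratch would require developing the notion of fan, the chain-equivalence behaviour of Pfister forms, and the combinatorial classification of finite spaces of orderings, which is why in the literature it is attributed to the long line of work culminating in \cite{minac, jacob1}.
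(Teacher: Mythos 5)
The first thing to say is that the paper does not prove Theorem \ref{thm:PFR} at all: it is quoted as a known result, with the proof attributed to \cite{minac} and \cite{jacob1}. So there is no internal argument to compare yours against; what can be assessed is whether your outline is a viable reconstruction of the cited literature. In broad strokes it is: the Euclidean base case, the iterated Laurent-series realisation of the group-ring/semidirect-product operation, a Kula-type realisation (\cite{kula1}, \cite{kula2}) for the product operation, and an induction on the number of square classes driven by Marshall's structure theory of finite spaces of orderings (\cite{marshall:absWitt}) are exactly the ingredients that the references assemble. Your computation for (1) and the Springer/Hensel argument for (3) are sound.

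There are, however, two places where your plan passes off as a ``translation'' something that is in fact the main theorem being invoked. First, in the inductive step you move from a decomposition of the reduced Witt ring, $W(F)\cong W(F_1)\pr W(F_2)$ or $W(F)\cong W(F_0)[C_2^m]$, to a decomposition of the pro-$2$ group, $G_F(2)\cong G_{F_1}(2)\ast_2 G_{F_2}(2)$ or $\Z_2^m\rtimes G_{F_0}(2)$. An isomorphism of Witt rings does not formally yield an isomorphism of maximal pro-$2$ Galois groups; the fact that for Pythagorean formally real fields $G_F(2)$ is determined by (and decomposes along with) the space of orderings is precisely the content of \cite{jacob1} and \cite{minac}, where the group is reconstructed explicitly from the orderings. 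Proposition \ref{prop:graded witt ring group construction} in this paper goes in the opposite direction (from groups/fields to graded Witt rings) and cannot be used to reverse the arrow. Without this recognition theorem your induction does not close. Second, in (3) the action is prescribed only on involutions $\sigma$ with $\sigma^2=1$; for this to determine the semidirect product one needs $G_{F_0}(2)$ to be topologically generated by involutions, which is again a nontrivial theorem about fields in $\mathcal{PFR}$ (the involutions correspond to real closures), not something that follows from the cyclotomic-character computation alone. If you make both of these dependencies explicit as citations, your outline becomes an accurate account of how Theorem \ref{thm:PFR} is established in the literature.
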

For any two fields $F_1,F_2\in\mathcal{PFR}$, $H^\bullet(G_{F_1}(2)\ast_2 G_{F_2}(2),\F_2)\cong H^\bullet(G_{F_1}(2),\F_2)\sqcap H^\bullet(G_{F_2}(2),\F_2)$ (see \cite[Theorem 4.1.4]{NSW}).
Also, for any field $F_0\in\mathcal{PFR}$ and any finite product $Z=\prod_{i=1}^m\Z_2$, $H^\bullet(Z\rtimes G_{F_0}(2),\F_2)=H^\bullet(G_{F_0}(2),\F_2)(t\mid x_1,\dots,x_m)$, with $t$ corresponding to the square class of $-1$ in Bloch-Kato isomorphism.
Since $H^\bullet(\Z/2\Z,\F_2)=\F_2[t]$ is universally Koszul, using Propositions \ref{prop:direct sum universally Koszul} and \ref{prop:twisted extension universally Koszul} we get the following.
\begin{thm}\label{thm:PFR uK}
Let $F$ be a Pythagorean formally real field. If $|F^\times/(F^\times)^2|<\infty$, then the algebra $H^\bullet(G_F(2),\F_2)$ is universally Koszul.
\end{thm}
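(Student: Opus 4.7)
The plan is a straightforward induction based on the structural description of $G_F(2)$ provided by Theorem \ref{thm:PFR}. The hypothesis $|F^\times/(F^\times)^2|<\infty$ places $F$ in the class $\mathcal{PFR}$ treated by that theorem, so $G_F(2)$ is obtained from Galois groups of Euclidean fields by finitely many applications of the two operations (\ref{op2}) and (\ref{op3}). It therefore suffices to show: (i) a universally Koszul base case; (ii) each of the two operations, interpreted on the cohomological side, preserves universal Koszulity.

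For (i), I would observe that if $E\in\mathcal{PFR}$ is Euclidean then $G_E(2)\cong\Z/2\Z=C_2$, so $H^\bullet(G_E(2),\F_2)\cong\F_2[t]$ is universally Koszul by Proposition \ref{prop:C_2 uK}. For (ii), the translation between group-theoretic operations and algebra-theoretic operations is already recorded in the paragraph immediately preceding the theorem: the free pro-$2$ product $G_{F_1}(2)\ast_2 G_{F_2}(2)$ produces the direct sum $H^\bullet(G_{F_1}(2),\F_2)\sqcap H^\bullet(G_{F_2}(2),\F_2)$ by \cite[Theorem 4.1.4]{NSW}, and the cyclotomic semidirect product $Z\rtimes G_{F_0}(2)$, with $Z=\prod_{i=1}^m\Z_2$ acted on as in Theorem \ref{thm:PFR}, produces the twisted extension $H^\bullet(G_{F_0}(2),\F_2)(t\mid x_1,\dots,x_m)$, where $t$ corresponds to the square class of $-1$ through the Bloch-Kato isomorphism.

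The inductive step now follows: direct sums preserve universal Koszulity by Proposition \ref{prop:direct sum universally Koszul}, and twisted extensions preserve it by Proposition \ref{prop:twisted extension universally Koszul}. Carrying out the induction along the finite sequence of operations that, by Theorem \ref{thm:PFR}, builds $G_F(2)$ from Euclidean pieces, we conclude that $H^\bullet(G_F(2),\F_2)$ is universally Koszul.

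There is essentially no real obstacle here; the substance of the argument lies entirely in the already-established Propositions \ref{prop:direct sum universally Koszul} and \ref{prop:twisted extension universally Koszul}, together with the non-trivial structural Theorem \ref{thm:PFR}. The only mild point of care is to make sure that the inductive description really stays within the finitely-generated setting, which is automatic from $|F^\times/(F^\times)^2|<\infty$ and the fact that each step of the induction takes finitely-generated inputs to finitely-generated outputs.
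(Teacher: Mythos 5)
Your argument is correct and is essentially identical to the paper's own proof: the authors likewise combine the structural description of Theorem \ref{thm:PFR} with the cohomological translations (free product $\mapsto$ direct sum, cyclotomic semidirect product $\mapsto$ twisted extension), the base case $H^\bullet(\Z/2\Z,\F_2)=\F_2[t]$, and Propositions \ref{prop:direct sum universally Koszul} and \ref{prop:twisted extension universally Koszul}. Nothing further is needed.
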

\begin{rem}
To any locally finite-dimensional graded $K$-algebra $A$ we can associate the \emph{Hilbert series}
\[
h_A(z)=\sum_{n\in\N}(\dim_K A_n)z^n.
\]
If in addition $A$ is Koszul, then we can attach to it a sequence of \emph{Betti numbers} $\beta_n$, which measure the number of free summands in the component of degree $n$ of the Koszul complex of $A$ (see \cite[Sections 2.2, 2.3]{polpos} or \cite[Section 6.1]{eneHerzog}). The formal power series 
\[
p_A(z)=\sum_{n\in\N}\beta_nz^n
\]
is called the \emph{Poincar\'e series} of $A$. The two series are related by the formula
\begin{equation}\label{eq:hilbert-poincare'}
h_A(z)p_A(-z)=1.
\end{equation}

In \cite{minac:hilbseries}, for any Pythagorean formally real field $F$ with finitely many square classes, the Hilbert series of the algebra $H^\bullet(G_F(2),\F_2)$ is introduced under the name of the \emph{Poincar\'e series} of $G_F(2)$.
These series are then effectively determined from the structure of the space of orderings of $F$ via a simple recursive formula (\cite[Theorems 2 and 10]{minac:hilbseries}).
Using Theorem \ref{thm:PFR} and Equation \eqref{eq:hilbert-poincare'} above, these results in \cite{minac:hilbseries} can be extended immediately to derive an effective formula for the Poincar\'e series of the algebras $H^\bullet(G_F(2),\F_2)$.
\end{rem}

In \cite[Corollary 2.2]{jacwar2}, B.~Jacob and R.~Ware prove that if $F$ is a field of $\chr F\neq 2$, with $|F^\times/(F^\times)^2|<\infty$ and with an elementary type Witt ring of quadratic forms, then, for any $a\in F$, the Witt ring of quadratic forms of $F[\sqrt a]$ is of elementary type as well.

From basic Galois theory and the structure of $2$-groups, we obtain the result that, if $F$ is a field of $\chr F \neq 2$ and $K/F$ is any finite (not necessarily Galois) $2$-extension such that $K\subseteq F(2)$, then there is a finite tower of degree $2$ extensions $F\subset F_1\subset\dots\subset F_n\subset K$.
Therefore, by induction, we obtain a strengthening of Theorem \ref{thm:PFR uK}.
\begin{thm}
Let $F$ be Pythagorean formally real with finitely many square classes, and let $K/F$ be a finite $2$-extension such that $K\subseteq F(2)$. Then $H^\bullet(G_K(2),\F_2)$ is universally Koszul.
\end{thm}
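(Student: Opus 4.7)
The plan is to iterate the Jacob--Ware theorem along a tower of quadratic extensions, and then translate the Witt-ring picture into a Koszul-theoretic one via the machinery of Section~\ref{ssec:witt}.

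First, using the tower argument cited just before the statement, fix a chain $F=F_0\subset F_1\subset\dots\subset F_n=K$ with $[F_{i+1}:F_i]=2$ and every $F_i\subseteq F(2)$. An induction on $n$ reduces the claim to the single-step assertion: if $F'\subseteq F(2)$ satisfies $|F'^\times/(F'^\times)^2|<\infty$ and $W(F')$ is an abstract Witt ring of elementary type, then $H^\bullet(G_{F'[\sqrt a]}(2),\F_2)$ is universally Koszul for every $a\in(F')^\times$. To start the induction, observe that Theorem~\ref{thm:PFR}, combined with the correspondence between the Galois-theoretic operations (free pro-$2$ product and cyclotomic semidirect product with $\Z_2^m$) and the ring-theoretic operations (direct product and iterated group ring) recalled in Section~\ref{ssec:witt}, shows that $W(F)$ itself is of elementary type. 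The Jacob--Ware result \cite[Corollary 2.2]{jacwar2} then propagates the elementary type property from $F_i$ to $F_{i+1}$ at each stage of the tower, so that $W(K)$ is of elementary type.

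Once this is known, the assembly of $W(K)$ from local-field Witt rings by direct products $\pr$ and group ring constructions $\slot[x]$ passes to the associated graded algebra $\gr W(K)$ by Proposition~\ref{prop:graded witt ring group construction} (group ring $\leadsto$ twisted extension $(\slot)(t\mid y)$) and its direct-product counterpart ($\pr \leadsto \sqcap$). The Milnor conjecture (Orlov--Vishik--Voevodsky, together with Bloch--Kato) identifies $\gr W(K)$ with $H^\bullet(G_K(2),\F_2)$, so that the latter is obtained in finitely many steps from algebras $H^\bullet(G_L(2),\F_2)$ with $L$ a local field of characteristic $0$, by direct sums $\sqcap$ and twisted extensions $(\slot)(t\mid y)$. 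Each such base algebra is universally Koszul: for $L=\R$ this is Proposition~\ref{prop:C_2 uK}, for $L=\C$ the cohomology is just $\F_2$, and for any finite extension $L$ of $\Q_p$ the group $G_L(2)$ is Demushkin (since $\mu_2\subseteq L$ holds automatically), so Proposition~\ref{prop:demushkin uK} applies.

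Finally, Propositions~\ref{prop:direct sum universally Koszul} and \ref{prop:twisted extension universally Koszul} propagate universal Koszulity through every step of the assembly, closing the induction. The only piece of bookkeeping worth flagging is that the element $t=\overline{1+1}\in (\gr W)_1$ featured in each twisted extension must satisfy $t+t=0$, as required in Proposition~\ref{prop:twisted extension universally Koszul}; this is automatic because we work over $\F_2$. No new ideas beyond the material already developed in Sections~\ref{sec:koszul}, \ref{sec:uK}, and \ref{ssec:witt} are needed, and the main conceptual obstacle---namely that a quadratic extension of a PFR field need not remain PFR, so Theorem~\ref{thm:PFR uK} cannot be applied directly at $K$---is bypassed precisely by working at the level of elementary type Witt rings, where stability under quadratic extensions is available.
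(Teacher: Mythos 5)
Your proof is correct and follows essentially the same route as the paper's: decompose $K/F$ into a tower of quadratic extensions, propagate the elementary-type property of the Witt ring along the tower via Jacob--Ware, and convert back to universal Koszulity through $\gr W(K)\cong H^\bullet(G_K(2),\F_2)$ together with Propositions \ref{prop:direct sum universally Koszul} and \ref{prop:twisted extension universally Koszul}. The paper gives this argument only in outline; your write-up supplies the details it leaves implicit, including the key point that the induction must carry the elementary-type property of the Witt ring rather than the PFR property, which is not preserved under quadratic extensions.
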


\providecommand{\bysame}{\leavevmode\hbox to3em{\hrulefill}\thinspace}
\providecommand{\MR}{\relax\ifhmode\unskip\space\fi MR }
\providecommand{\MRhref}[2]{%
  \href{http://www.ams.org/mathscinet-getitem?mr=#1}{#2}
}
\providecommand{\href}[2]{#2}


\begin{thebibliography}{CDNRW13}

\bibitem[AS27a]{artsch1}
E.~Artin and O.~Schreier, \emph{Algebraische {K}onstruktion reeller
  {K}\"{o}rper}, Abh. Math. Sem. Univ. Hamburg \textbf{5} (1927), no.~1,
  85--99. Reprinted in: Artin, E., {\it Collected Papers} (S. Lang and J. Tate,
  eds.), Springer, 1965, 258--272.

\bibitem[AS27b]{artsch2}
\bysame, \emph{Eine {K}ennzeichnung der reell abgeschlossenen {K}\"{o}rper},
  Abh. Math. Sem. Univ. Hamburg \textbf{5} (1927), no.~1, 225--231. Reprinted
  in: Artin, E., {\it Collected Papers} (S. Lang and J. Tate, eds.), Springer,
  1965, 289--295.

\bibitem[Bec74]{becker}
E.~Becker, \emph{Euklidische {K}{\"o}rper und euklidische {H}{\"{u}}llen von
  {K}\"orpern}, J. Reine Angew. Math. \textbf{268/269} (1974), 41--52.
  A collection of articles dedicated to Helmut Hasse on his seventy-fifth
  birthday, II.
  
\bibitem[Ber78]{bergman}
G.~M.~Bergman, \emph{The diamond lemma for ring theory}, Adv. in Math. \textbf{29} (1978), no.~2, 178--218.

\bibitem[Bro12]{brown}
K.~S.~Brown, \emph{Cohomology of groups}, Graduate Texts in Mathematics, vol. 87, Springer-Verlag, New York, 2012.

\bibitem[BW98]{bucwin}
B.~Buchberger, F.~Winkler, \emph{Gr\"obner bases and applications}, London Mathematical Society Lecture Note Series, vol. 251, Cambridge University Press, Cambridge, UK, 1998.

\bibitem[CDNR13]{codero}
A.~Conca, E.~De~Negri and M.~E.~Rossi, \emph{{K}oszul algebras and regularity},
  Commutative Algebra: Expository Papers Dedicated to {D}avid {E}isenbud on the
  Occasion of His 65th Birthday (I.~Peeva, ed.), Springer, 2013, 285--315.

\bibitem[CEM12]{cem:quot}
S.~K.~Chebolu, I.~Efrat and J.~Min{\'a}{\v{c}}, \emph{Quotients of absolute
  {G}alois groups which determine the entire {G}alois cohomology}, Math. Ann.
  \textbf{352} (2012), no.~1, 205--221.

\bibitem[CM82]{carmar}
A.~B.~Carson and M.~Marshall, \emph{Decomposition of {W}itt rings}, Canad. J.
  Math. \textbf{34} (1982), no.~6, 1276--1302.

\bibitem[CMS16]{chmish}
S.~Chebolu, J.~Min\'{a}\v{c} and A.~Schultz, \emph{Galois
  {$p$}-groups and {G}alois modules}, Rocky Mountain J. Math. \textbf{46}
  (2016), no.~5, 1405--1446.

\bibitem[Con00]{conca:u-Koszul}
A.~Conca, \emph{Universally {K}oszul algebras}, Math. Ann. \textbf{317} (2000),
  no.~2, 329--346.

\bibitem[Cor82]{cordes}
C.~M.~Cordes, \emph{Quadratic forms over fields with four quaternion algebras},
  Acta Arith. \textbf{41} (1982), no.~1, 55--70.

\bibitem[CRV01]{corova}
A.~Conca, M.~E.~Rossi and G.~Valla, \emph{Gr\"{o}bner flags and {G}orenstein
  algebras}, Compositio Math. \textbf{129} (2001), no.~1, 95--121.

\bibitem[CTV01]{cotrva}
A.~Conca, N.~V.~Trung and G.~Valla, \emph{Koszul property for points in
  projective spaces}, Math. Scand. \textbf{89} (2001), no.~2, 201--216.

\bibitem[Dem61]{demushkin1961}
S.~P.~Demu\v{s}kin, \emph{The group of a maximal {$p$}-extension of a local
  field}, Izv. Akad. Nauk SSSR Ser. Mat. \textbf{25} (1961), 329--346.

\bibitem[Dem63]{demushkin1963}
\bysame, \emph{On {$2$}-extensions of a local field}, Sibirsk. Mat. \v Z.
  \textbf{4} (1963), 951--955.

\bibitem[Dic07]{dickson}
L.~E.~Dickson, \emph{On quadratic forms in a general field}, Bull. Amer. Math.
  Soc. \textbf{14} (1907), no.~3, 108--115.

\bibitem[Efr95]{ido:ETC}
I.~Efrat, \emph{Orderings, valuations, and free products of {G}alois groups},
  Sem. Structure Alg{\'e}briques Ordonn{\'e}es, {U}niv. {P}aris {VII} (1995).

\bibitem[EH12]{eneHerzog}
V.~Ene and J.~Herzog, \emph{Gr\"obner bases in commutative algebra}, Graduate
  Studies in Mathematics, vol. 130, American Mathematical Society, Providence,
  RI, 2012.

\bibitem[EHH15]{enhehi}
V.~Ene, J.~Herzog and T.~Hibi, \emph{Linear flags and {K}oszul filtrations},
  Kyoto J. Math. \textbf{55} (2015), no.~3, 517--530.

\bibitem[Eis95]{eisenbud}
D.~Eisenbud, \emph{Commutative algebra with a view toward algebraic geometry},
  Graduate Texts in Mathematics, vol. 150, Springer-Verlag, New York, 1995.

\bibitem[EL72]{elmanLam}
R.~Elman and T.~Y.~Lam, \emph{Quadratic forms over formally real fields and
  {P}ythagorean fields}, Amer. J. Math. \textbf{94} (1972), 1155--1194.

\bibitem[EM11]{efrmin:witt}
I.~Efrat and J.~Min\'{a}\v{c}, \emph{On the descending central sequence of
  absolute {G}alois groups}, Amer. J. Math. \textbf{133} (2011), no.~6,
  1503--1532.

\bibitem[EM17]{efrmat}
I.~Efrat and E.~Matzri, \emph{Triple {M}assey products and absolute {G}alois
  groups}, J. Eur. Math. Soc. (JEMS) \textbf{19} (2017), no.~12, 3629--3640.

\bibitem[EQ19]{efrqua}
I.~Efrat and C.~Quadrelli, \emph{The {K}ummerian property and maximal pro-{$p$}
  {G}alois groups}, J. Algebra \textbf{525} (2019), 284--310.

\bibitem[GM]{guimin}
P.~Guillot and J.~Min\'{a}\v{c}, \emph{Extensions of unipotent groups, {M}assey
  products and {G}alois cohomology}, Adv. Math. \textbf{354} (2019), 106748, 40 pp.

\bibitem[GMTW18]{gumito}
P.~Guillot, J.~Min{\'a}{\v{c}} and A.~Topaz, \emph{Four-fold {M}assey products
  in {G}alois cohomology. {W}ith an appendix by {O}.~{W}ittenberg}, Compos.
  Math. \textbf{154} (2018), no.~9, 1921--1959.

\bibitem[HHR00]{hehire}
J.~Herzog, T.~Hibi and G.~Restuccia, \emph{Strongly {K}oszul algebras}, Math.
  Scand. \textbf{86} (2000), no.~2, 161--178.

\bibitem[Hil90]{hilbert}
D.~Hilbert, \emph{{\"U}ber die {T}heorie der algebraischen {F}ormen}, Math.
  Ann. \textbf{36} (1890), no.~4, 473--534.

\bibitem[HW]{harwit}
  Y.~Harpaz and O.~Wittenberg, \emph{The Massey vanishing conjecture for number fields}, arXiv preprint arXiv:1904.06512.

\bibitem[HW09]{haeseweibel}
C.~Haesemeyer and C.~Weibel, \emph{Norm varieties and the chain lemma (after
  {M}arkus {R}ost)}, Algebraic topology, Abel Symp., vol.~4, Springer, 2009,
  95--130.

\bibitem[HW15]{hw}
M.~J.~Hopkins and K.~G.~Wickelgren, \emph{Splitting varieties for triple {M}assey
  products}, J. Pure Appl. Algebra \textbf{219} (2015), no.~5, 1304--1319.

\bibitem[Jac81]{jacob1}
B.~Jacob, \emph{On the structure of {P}ythagorean fields}, J. Algebra
  \textbf{68} (1981), no.~2, 247--267.

\bibitem[JW89]{jacwar}
B.~Jacob and R.~Ware, \emph{A recursive description of the maximal pro-{$2$}
  {G}alois group via {W}itt rings}, Math. Z. \textbf{200} (1989), no.~3,
  379--396.

\bibitem[JW91]{jacwar2}
\bysame, \emph{Realizing dyadic factors of elementary type {W}itt rings and
  pro-{$2$} {G}alois groups}, Math. Z. \textbf{208} (1991), no.~2, 193--208.

\bibitem[Kul79]{kula1}
M.~Kula, \emph{Fields with prescribed quadratic form schemes}, Math. Z.
  \textbf{167} (1979), no.~3, 201--212.

\bibitem[Kul85]{kula2}
\bysame, \emph{Fields and quadratic form schemes}, Ann. Math. Sil. (1985),
  no.~13, 7--22.

\bibitem[Lab67]{labute:demushkin}
J.~P.~Labute, \emph{Classification of {D}emushkin groups}, Canad. J. Math.
  \textbf{19} (1967), 106--132.

\bibitem[Lam05]{lam2005}
T.~Y.~Lam, \emph{Introduction to quadratic forms over fields}, Graduate Studies
  in Mathematics, vol.~67, American Mathematical Society, Providence, RI, 2005.

\bibitem[LV12]{lodval}
J.-L.~Loday and B.~Vallette, \emph{Algebraic operads}, Grundlehren der
  Mathematischen Wissenschaften [Fundamental Principles of Mathematical
  Sciences], vol. 346, Springer-Verlag, Heidelberg, 2012.
  
\bibitem[LvdD81]{lubvdd}
A. Lubotzky and L. van den Dries, \emph{Subgroups of free profinite groups and
  large subfields of $\widetilde Q$}, Israel J. Math. \textbf{39} (1981),
  no.~1--2, 25--45.

\bibitem[Mar80]{marshall:absWitt}
M.~Marshall, \emph{Abstract {W}itt rings}, Queen's Papers in Pure and Applied
  Mathematics, vol.~57, Queen's University, Kingston, ON, 1980.

\bibitem[Mat18]{matzri:massey}
E.~Matzri, \emph{Triple {M}assey products of weight {$(1,n,1)$} in {G}alois
  cohomology}, J. Algebra \textbf{499} (2018), 272--280.

\bibitem[Mer81]{merk}
A.~S.~Merkurjev, \emph{On the norm residue symbol of degree {$2$}}, Dokl. Akad.
  Nauk SSSR \textbf{261} (1981), no.~3, 542--547, English translation: Soviet
  Math. Dokl. {\bf 24} (1981), no. 3, 546--551 (1982).

\bibitem[Mic13]{mich2}
I.~M.~Michailov, \emph{Galois realizability of groups of orders {$p^5$} and
  {$p^6$}}, Cent. Eur. J. Math. \textbf{11} (2013), no.~5, 910--923.

\bibitem[Mil70]{milnor}
J.~Milnor, \emph{Algebraic {$K$}-theory and quadratic forms}, Invent. Math.
  \textbf{9} (1970), 318--344.

\bibitem[Min86]{minac}
J.~Min{\'a}{\v{c}}, \emph{{G}alois groups of some {$2$}-extensions of ordered
  fields}, C. R. Math. Rep. Acad. Sci. Canada \textbf{8} (1986), no.~2,
  103--108.

\bibitem[Min93]{minac:hilbseries}
J.~Min\'{a}\v{c}, \emph{Poincar\'{e} polynomials, stability indices and number
  of orderings. {I}}, Advances in Number Theory ({K}ingston, {ON}, 1991),
  Oxford Sci. Publ., Oxford Univ. Press, 1993, 515--528.

\bibitem[MPQT]{Koszul1}
J.~Min{\'a}{\v{c}}, F.~W.~Pasini, C.~Quadrelli and N.~D.~T{\^a}n, \emph{Koszul
  algebras and quadratic duals in {G}alois cohomology}, arXiv preprint
  arXiv:1808.01695.

\bibitem[MRT]{mirota}
J.~Min\'{a}\v{c}, M.~Rogelstad and N.~D.~T\^{a}n, \emph{Relations in the
  maximal pro-{$p$} quotients of absolute {G}alois groups}, Trans. Amer. Math. Soc. \textbf{373} (2020), 2499--2524.

\bibitem[MS96]{MSp}
J.~Min{\'a}{\v{c}} and M.~Spira, \emph{Witt rings and {G}alois groups}, Ann. of
  Math. (2) \textbf{144} (1996), no.~1, 35--60.

\bibitem[MT16]{mintan1}
J.~Min\'{a}\v{c} and N.~D.~T\^{a}n, \emph{Triple {M}assey products vanish over
  all fields}, J. Lond. Math. Soc. (2) \textbf{94} (2016), no.~3, 909--932.

\bibitem[MT17]{mintan2}
\bysame, \emph{Triple {M}assey products and {G}alois theory}, J. Eur. Math.
  Soc. (JEMS) \textbf{19} (2017), no.~1, 255--284.

\bibitem[MZ11]{mich1}
I.~M.~Michailov and N.~P.~Ziapkov, \emph{On realizability of {$p$}-groups
  as {G}alois groups}, Serdica Math. J. \textbf{37} (2011), no.~3, 173--210.

\bibitem[NSW08]{NSW}
J.~Neukirch, A.~Schmidt and K.~Wingberg, \emph{Cohomology of number fields},
  second ed., Grundlehren der Mathematischen Wissenschaften [Fundamental
  Principles of Mathematical Sciences], vol. 323, Springer-Verlag, Berlin,
  2008.

\bibitem[OVV07]{orvivo}
D.~Orlov, A.~Vishik and V.~Voevodsky, \emph{An exact sequence for
  {$K^M_\ast/2$} with applications to quadratic forms}, Ann. of Math. (2)
  \textbf{165} (2007), no.~1, 1--13.

\bibitem[Pio01]{piontk:hilb}
D.~I.~Piontkovski\u{\i}, \emph{On {H}ilbert series of {K}oszul algebras}, Funct.
  Anal. Appl. \textbf{35} (2001), no.~2, 133--137.

\bibitem[Pio05]{piontk}
\bysame, \emph{Koszul algebras and their ideals}, Funct. Anal.
  Appl. \textbf{39} (2005), no.~2, 120--130.
  
\bibitem[Pio]{pio:noncomm}
\bysame, \emph{Noncommutative Koszul filtrations}, arXiv preprint
arXiv:math/0301233.

\bibitem[Pos95]{posit:hilb}
L.~Positselski, \emph{The correspondence between {H}ilbert series of
  quadratically dual algebras does not imply their having the {K}oszul
  property}, Funct. Anal. Appl. \textbf{29} (1995), no.~3, 213--217.

\bibitem[Pos98]{posit:thesis}
\bysame, \emph{Koszul property and {B}ogomolov's conjecture}, Ph.D. thesis,
  Harvard University, 1998, Available at
  \texttt{https://conf.math.illinois.edu/K-theory/0296/}.

\bibitem[Pos05]{positselsky:koszul}
\bysame, \emph{Koszul property and {B}ogomolov's conjecture}, Int. Math. Res.
  Not. (2005), no.~31, 1901--1936.

\bibitem[Pos14]{positselsky:koszul2}
\bysame, \emph{Galois cohomology of a number field is {K}oszul}, J. Number
  Theory \textbf{145} (2014), 126--152.

\bibitem[PP05]{polpos}
A.~Polishchuk and L.~Positselski, \emph{Quadratic algebras}, University Lecture
  Series, vol.~37, American Mathematical Society, Providence, RI, 2005.

\bibitem[Pri70]{priddy}
S.~B.~Priddy, \emph{Koszul resolutions}, Trans. Amer. Math. Soc. \textbf{152}
  (1970), 39--60.

\bibitem[PV95]{posivis:koszul}
L.~Positselski and A.~Vishik, \emph{Koszul duality and {G}alois cohomology},
  Math. Res. Lett. \textbf{2} (1995), no.~6, 771--781.

\bibitem[Qua]{claudio}
C.~Quadrelli, \emph{One relator maximal pro-{$p$} {G}alois groups and {K}oszul
  algebras}, arXiv preprint arXiv:1601.04480.

\bibitem[Roo95]{roos}
J.-E.~Roos, \emph{On the characterisation of {K}oszul algebras. {F}our
  counterexamples}, C. R. Acad. Sci. Paris S\'{e}r. I Math. \textbf{321}
  (1995), no.~1, 15--20.

\bibitem[Sch14]{andy}
A.~Schultz, \emph{Parameterizing solutions to any {G}alois embedding
  problem over {$\mathbb{Z}/p^n\mathbb{Z}$} with elementary {$p$}-abelian
  kernel}, J. Algebra \textbf{411} (2014), 50--91.

\bibitem[Ser95]{serre:demushkin}
J.-P.~Serre, \emph{Structure de certains pro-{$p$}-groupes (d'apr\`es {D}emu\v
  skin)}, S\'eminaire {B}ourbaki, {V}ol.\ 8, Soc. Math. France, Paris, 1995,
  Exp.~No.~252, 145--155.

\bibitem[Ser02]{serre}
\bysame, \emph{Galois cohomology}, English ed., Springer Monographs in
  Mathematics, Springer-Verlag, Berlin, 2002.

\bibitem[SJ06]{suslin:norm}
A.~Suslin and S.~Joukhovitski, \emph{Norm varieties}, J. Pure Appl. Algebra
  \textbf{206} (2006), no.~1-2, 245--276.

\bibitem[Voe03]{voe}
V.~Voevodsky, \emph{Motivic cohomology with {${\mathbf Z}/2$}-coefficients},
  Publ. Math. Inst. Hautes \'Etudes Sci. (2003), no.~98, 59--104.

\bibitem[Voe10]{voevodsky:motivic}
\bysame, \emph{Motivic {E}ilenberg-{M}acLane spaces}, Publ. Math. Inst. Hautes
  \'Etudes Sci. (2010), no.~112, 1--99.

\bibitem[Voe11]{voevodsky:BK}
\bysame, \emph{On motivic cohomology with {$\mathbf Z/l$}-coefficients}, Ann.
  of Math. (2) \textbf{174} (2011), no.~1, 401--438.

\bibitem[Wad83]{wadsworth:cohomology}
A.~R.~Wadsworth, \emph{{$p$}-{H}enselian fields: {$K$}-theory, {G}alois
  cohomology, and graded {W}itt rings}, Pacific J. Math. \textbf{105} (1983),
  no.~2, 473--496.

\bibitem[War78]{ware}
R.~Ware, \emph{When are {W}itt rings group rings? {II}}, Pacific J. Math.
  \textbf{76} (1978), no.~2, 541--564.

\bibitem[Wic12]{wick1}
K.~Wickelgren, \emph{On {$3$}-nilpotent obstructions to {$\pi_1$} sections
  for {$\mathbb {P}^1_\mathbb Q-\{0,1,\infty\}$}}, The arithmetic of
  fundamental groups---{PIA} 2010, Contrib. Math. Comput. Sci., vol.~2,
  Springer, Heidelberg, 2012, pp.~281--328.

\bibitem[Wic17]{wick2}
\bysame, \emph{Massey products {$\langle y$}, {$x$}, {$x$}, {$\ldots$}, {$x$},
  {$x$}, {$y\rangle$} in {G}alois cohomology via rational points}, J. Pure
  Appl. Algebra \textbf{221} (2017), no.~7, 1845--1866.

\bibitem[Wit37]{witt}
E.~Witt, \emph{Theorie der quadratischen {F}ormen in beliebigen {K}\"orpern},
  J. Reine Angew. Math. \textbf{176} (1937), 31--44.

\end{thebibliography}
\end{document}